\documentclass[reqno,10pt]{amsart}

\usepackage{amsmath,amsfonts,amssymb,amsxtra,latexsym,amscd,enumerate,enumitem,amsthm,verbatim,here}
\usepackage{mathtools}
\usepackage{array}
\usepackage{booktabs}
\usepackage[table,dvipsnames]{xcolor}
\usepackage{cancel}
\usepackage[margin=1.2in]{geometry}
\numberwithin{equation}{section}

\usepackage{hyperref}
\definecolor{myrefred}{RGB}{180,0,0}
\definecolor{myciteblue}{RGB}{0,70,140}
\hypersetup{
colorlinks=true,
pdfstartview={FitV null},
linkcolor=myrefred,
citecolor=myciteblue,
urlcolor=black,
pdftitle={Asymptotic behavior of the one-dimensional Vlasov-Poisson system with a repulsive harmonic potential},
pdfauthor={Wenrui Huang}
}
\definecolor{labelkey}{rgb}{0.6,0,0}
\newcommand{\tK}{\widetilde{\K}}
\newcommand{\tga}{\widetilde{\ga}}
\newcommand{\pr}{\partial}

\newcommand{\of}[1]{\ensuremath{\left(#1\right)}}
\newcommand{\off}[1]{\ensuremath{\left[#1\right]}}

\newcommand{\R}{\mathbb{R}}

\newcommand{\K}{\mathcal{K}}
\newcommand{\la}{\langle}
\newcommand{\ra}{\rangle}

\newcommand{\dd}{d}
\newcommand{\sgn}{\operatorname{sgn}}

\def\ga{\gamma}
\def\H{\mathcal{H}}

\def\nab{\nabla}
\def\les{\lesssim}

\def\D{\mathcal{D}}
\newtheorem{theorem}{Theorem}
\newtheorem{lemma}{Lemma}[section]

\newtheorem{proposition}[lemma]{Proposition}

\theoremstyle{remark}
\newtheorem{remark}[lemma]{Remark}
\def\oo{\infty}
\makeatletter
\newcommand{\sectionnotoc}[1]{%
\begingroup
\let\@tocwrite\@gobbletwo
\section*{#1}%
\endgroup
}
\makeatother

\newcommand{\cO}{\mathcal O}

\begin{document}
\title[One-dimensional Vlasov--Poisson with harmonic forcing]{Asymptotic behavior of the one-dimensional Vlasov--Poisson system with a repulsive harmonic potential}
\author{Wenrui Huang}
\address{Brown University}
\email{wenrui\_huang@brown.edu}
\begin{abstract}
We study the one-dimensional Vlasov--Poisson system with a repulsive harmonic potential. For repulsive self-interactions, we prove modified scattering for initial data of arbitrary size, assuming only velocity weights and one derivative; no smallness, symmetry, compact support, or spatial moments are required. In the lens variables of \cite{HK26}, the self-consistent force carries a nonintegrable time weight. We remove it by a momentum correction that solves a forced Burgers equation; the repulsive sign makes this correction monotone, which gives bounds for large data. We also construct modified wave operators for both signs of the interaction. Together with \cite{VR24,BVR25,HK26}, this gives a scattering theory in every dimension.

\end{abstract}
\maketitle
\setcounter{tocdepth}{1}
\tableofcontents

\section{Introduction}
We study the long time behavior of the one-dimensional Vlasov--Poisson system with a repulsive harmonic potential 
\begin{equation}\label{eq:VPH}
    \pr_t\mu+v\pr_x\mu+(x+\lambda E[\mu])\pr_v\mu=0,
\end{equation}
where
\begin{equation*}
    E[\mu](t,x)=\frac{1}{2}\iint_{\R^2}\sgn(x-y)\mu^2(t,y,v)\dd y\dd v,\qquad \pr_xE[\mu](t,x)=\int_{\R}\mu^2(t,x,v)\dd v.
\end{equation*}
Equation \eqref{eq:VPH} can also be written in Hamiltonian form as
\begin{equation*}
    \pr_t\mu+\{\mu,\H\}=0,
\end{equation*}
where the Poisson bracket is defined by $\{f,g\}=\pr_xf\pr_vg-\pr_vf\pr_xg$, and the associated Hamiltonian is
\begin{equation*}
    \H=\frac{v^2}{2}-\frac{x^2}{2}-\lambda\phi(t,x),\qquad \pr_x\phi=E[\mu],\qquad \pr_x^2\phi=\int_{\R}\mu^2(t,x,v)\dd v.
\end{equation*}
This equation describes the evolution of a collisionless particle system subject to an external force and a self-consistent interaction. The parameter $\lambda$ determines the sign and strength of the self-consistent interaction: $\lambda>0$ corresponds to the repulsive plasma case, while $\lambda<0$ corresponds to the attractive gravitational case. The external harmonic force remains repulsive in both cases; for $\lambda<0$, self-attraction therefore competes with the outward harmonic force.

The total mass is conserved:
\begin{equation}\label{eq:mass}
M:=\iint_{\R^2}\mu_0^2(x,v)\dd x\dd v=\iint_{\R^2}\mu^2(t,x,v)\dd x\dd v.
\end{equation}
The definition of the electric field consequently yields the uniform bound
\begin{equation}
\|E(t)\|_{L^\infty}\le\frac{M}{2}.
\end{equation}
The potential $-x^2/2$ is the simplest external potential with unstable trapping: the flow $\dot x=v$, $\dot v=x$ has a hyperbolic fixed point at the origin, with stable and unstable manifolds $\{v=-x\}$ and $\{v=x\}$, and every other trajectory escapes at an exponential rate. Near any nondegenerate saddle of a Hamiltonian the dynamics looks like this to leading order. For example, near the photon sphere of the Schwarzschild spacetime the radial null geodesic flow at fixed angular momentum linearizes, up to normalization, to $H(x,v)=\frac12v^2-\frac12x^2$. See \cite{VR24,BVR25}.

\subsection{Previous work}

The global existence theory for the classical Vlasov--Poisson system in three dimensions is well established. Bardos and Degond \cite{BD85} proved global existence for small initial data, while Lions and Perthame \cite{LP91} and Pfaffelmoser \cite{P92} subsequently established global existence for broad classes of large initial data under suitable regularity and decay assumptions.

The scattering problem depends strongly on the spatial dimension. Chae and Ha \cite{CH06} established linear scattering for small solutions in dimensions $d\ge4$. In three dimensions, Choi and Ha \cite{CH11} showed that the long-range Coulomb interaction obstructs linear scattering in $L^1$. Choi and Kwon \cite{CK16} proved modified scattering for small data, and Ionescu, Pausader, Wang, and Widmayer \cite{IPWW22} identified an explicit logarithmic correction determined by the asymptotic electric field. Using the Hamiltonian structure and a pseudoconformal transformation, Flynn, Ouyang, Pausader, and Widmayer \cite{FOPW23} gave another proof of small-data modified scattering and constructed modified wave operators. 

Pankavich \cite{Pan22} established modified scattering for multispecies solutions under a decay assumption on the electric field, without imposing smallness directly on the initial data. Refined asymptotic expansions for small-data solutions were obtained by Bigorgne and Velozo Ruiz \cite{BiVe}, while Schlue and Taylor \cite{ScTa25} studied inverse modified scattering and asymptotic expansions for smooth, compactly supported scattering profiles of arbitrary size. More recently, Bigorgne \cite{Big26} proved asymptotic stability of large spherically symmetric solutions in the repulsive case under sufficiently small perturbations that need not preserve symmetry.

In the presence of a boundary, the  author, Pausader, and Suzuki \cite{HuPaSu} established global existence and modified scattering for small, localized data in a class of unbounded $C^3$ convex domains with perfectly conducting walls. Modified scattering has also been established for small perturbations of a repulsive point charge, first in the radial setting by Pausader and Widmayer \cite{PW21}, and subsequently without spherical symmetry by Pausader, Widmayer, and Yang \cite{PW24}, who described the coupled asymptotic dynamics of the point charge and the surrounding gas. Kepka and Widmayer \cite{KW25} established global existence and modified scattering for small, radially symmetric perturbations of an attractive point mass, with the initial particle distribution suitably localized on hyperbolic Kepler trajectories.

We also refer to \cite{HX26} for  the modified scattering result of Vlasov-Hartree system with a broad range of interaction potentials.

External potentials give rise to further asymptotic regimes, including phase mixing along trapped trajectories. Rioseco and Sarbach \cite{RS20} proved relaxation of macroscopic observables for collisionless gases in external gravitational central potentials under suitable nondegeneracy assumptions. Chaturvedi and Luk \cite{CL22} proved quantitative phase mixing for a one-dimensional linear transport equation with an anharmonic confining potential. They subsequently investigated the gravitational Vlasov--Poisson system under an attractive external Kepler potential \cite{CL24}, obtaining linear phase mixing without symmetry assumptions and nonlinear phase mixing for small spherically symmetric solutions on long but finite time intervals.

For the Vlasov--Poisson system with a repulsive harmonic potential, Velozo Ruiz and Velozo Ruiz \cite{VR24} proved global existence and sharp decay estimates for small-data solutions in dimensions $d\ge2$. Bigorgne, Velozo Ruiz, and Velozo Ruiz \cite{BVR25} subsequently established modified scattering for small-data solutions in two dimensions. The author and Kwon \cite{HK26} introduced a hyperbolic lens transform and constructed the corresponding wave operators and scattering maps for small data.

The present work establishes modified scattering for the one-dimensional repulsive problem with initial data of \textit{arbitrary size} in the stated class, without symmetry, compact support, or spatial moment assumptions. Together with the existing higher-dimensional results, this extends the scattering theory for the Vlasov--Poisson system with a repulsive harmonic potential to every spatial dimension.

\subsection{Main results}
The first results concern the long-time behavior of \eqref{eq:VPH}.
\begin{theorem}\label{thm:main}
Assume $\lambda>0$. Let $\mu_0\in C^1_{x,v}(\R^2)$  and satisfy
\begin{equation}\label{eq:data}
    C_0:=\|\la v\ra\mu_0\|_{L^2}
    +\|\la v\ra^3\mu_0\|_{L^\infty}
    +\|\la v\ra\nabla_{x,v}\mu_0\|_{L^\infty}<\infty.
\end{equation}
Then there exists a unique global classical solution $\mu$ of \eqref{eq:VPH} with initial data $\mu_0$. Moreover, there exist $\mu_\infty\in L^2_{x,v}\cap L^\infty_{x,v}$ and $E_\infty=E[\mu_\infty]\in L^\infty_x$ such that
\begin{equation}\label{eq:mscatter}
    \|\mu(t,\mathcal{X}(t,x-v,v),\mathcal{V}(t,x-v,v))-\mu_\infty(x,v)\|_{L^\infty_{x,v}}
    \les e^{-t},\qquad \text{as }t\to\infty,
\end{equation}
where
\begin{equation*}
    \begin{split}
        \mathcal{X}(t,x,v)&=x\cosh t+v\sinh t-\frac{\lambda}{2}(1-e^{-2t})E_\infty(x+v),\\
        \mathcal{V}(t,x,v)&=v\cosh t+x\sinh t+\frac{\lambda}{2}(1-e^{-2t})E_\infty(x+v).
    \end{split}
\end{equation*}
\end{theorem}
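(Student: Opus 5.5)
We pass to the lens (hyperbolic) variables of \cite{HK26}. The free flow is $x(t)=x_0\cosh t+v_0\sinh t$, $v(t)=v_0\cosh t+x_0\sinh t$, so we write the unknown in the light-cone coordinates $q=(x+v)e^{-t}$, $p=x-v$. In these coordinates $q$ is the asymptotic label along the unstable manifold and $p$ decays along trajectories. Concretely, we set
\[
\gamma(t,q,p)=\mu\big(t,\tfrac{e^t q+e^{-t}p}{2},\tfrac{e^t q-e^{-t}p}{2}\big).
\]
Then \eqref{eq:VPH} becomes a transport equation in $(q,p)$ driven only by the self-consistent force. Since $\dot x+\dot v=(x+v)+\lambda E$ and $\dot x-\dot v=-(x-v)-\lambda E$, the characteristics satisfy
\[
\dot q=\lambda e^{-t}E,\qquad \dot p=-p-\lambda E .
\]
In the $(q,p)$ variables the density at time $t$ is concentrated where $x\approx e^tq/2$. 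Up to a Jacobian factor, $E(t,x)$ is therefore the cumulative mass in $q$, namely $\tilde E(t,q)=\frac12\int\sgn(q-q')\gamma^2\,dq'dp'$, plus corrections of size $e^{-2t}|p|$. The $q$-equation thus has an integrable weight $e^{-t}$. The $p$-equation, however, carries the force at weight $O(1)$; this is the nonintegrable weight mentioned in the abstract.

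\textbf{Step 1 (global existence and a priori bounds).} Local well-posedness of $C^1$ solutions is standard, since $E$ is Lipschitz in $x$ with constant $\|\rho\|_{L^\infty}$. The $L^\infty$ norms of $\mu$ and the $L^2$ mass are conserved, and $|E|\le M/2$. Along characteristics this gives $|\dot q|\le \lambda M e^{-t}/2$ and $|p(t)|\le |p(0)|+\lambda M/2$. Hence $q$ converges, and $v$-weights propagate: $|x-v|$ stays bounded by its initial value plus $O(M)$. The density $\rho(t,x)$ is then controlled by $\|\la v\ra^3\mu_0\|_\infty$, and its growth is at worst polynomial because velocity spreads like $e^t$. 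This rules out blow-up, since the force field $x+\lambda E$ is globally Lipschitz as long as $\rho$ is bounded on finite time intervals.

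\textbf{Step 2 (momentum correction; the main obstacle).} We remove the $O(1)$ force from the $p$-equation by subtracting a corrector. We set $p=\tilde p+\Phi(t,q)$, where $\Phi$ solves $\partial_t\Phi+\lambda e^{-t}E\,\partial_q\Phi=-\Phi-\lambda \tilde E(t,q)$, a forced Burgers-type equation. With this choice $\dot{\tilde p}=-\tilde p+O(\text{error})$, and $\tilde p$ contracts. For large data the difficulty is controlling $\partial_q\Phi$, since a Burgers equation may develop shocks and lose monotonicity. Here $\lambda>0$ is essential. $\partial_q\tilde E=\int\gamma^2dp\ge0$, so the forcing is monotone in $q$. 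Differentiating the equation, $\partial_q\Phi$ satisfies a Riccati-type equation whose nonlinear term has a favorable sign, so $\partial_q\Phi$ stays bounded and does not blow up. We would first try to prove the bound $-C\le\partial_q\Phi\le 0$ by a comparison/maximum principle along the characteristics of the corrector. Once we have it, the map $(q,\tilde p)\mapsto(q,p)$ is bi-Lipschitz uniformly in time. The derivative bounds $\|\nabla\gamma\|_\infty$ then grow at most mildly, and the assumption $\|\la v\ra\nabla\mu_0\|_\infty$ closes a Gronwall argument for $\partial_q\gamma$.

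\textbf{Step 3 (convergence and identification of the profile).} Using the derivative bounds, $\tilde E(t,q)$ converges at rate $e^{-t}$ to $E_\infty(q)$, the field of the limiting profile. Along characteristics $q(t)\to q_\infty$ with error $O(e^{-t})$, and the corrected $\tilde p$ contracts exponentially. Inverting, the trajectory starting at the asymptotic data $(x,v)$ is $\mathcal X,\mathcal V$ up to $O(e^{-t})$ in the corresponding coordinates. Here the term $\frac{\lambda}{2}(1-e^{-2t})E_\infty(x+v)$ is exactly the integrated effect of the limiting field: solving $\dot p=-p-\lambda E_\infty$ and $\dot q=\lambda e^{-t}E_\infty$ at leading order gives the $\mp\frac\lambda2(1-e^{-2t})E_\infty$ shifts in $x$ and $v$. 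We then define $\mu_\infty$ as the limit of $\gamma$ composed with the corrected flow; this limit exists in $L^\infty$ because the time derivative is $O(e^{-t})$ times $\|\nabla\gamma\|_\infty$. This yields \eqref{eq:mscatter}. Finally, $\mu_\infty\in L^2$ follows from mass conservation and Fatou's lemma, and $E_\infty=E[\mu_\infty]$ follows from the weak convergence of $\gamma^2$ in $q$.
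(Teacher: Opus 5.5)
First, Step~2 does not produce the time-uniform bounds that a large-data argument needs. As written, your corrector equation is linear in $\Phi$: its transport speed is $\lambda e^{-t}E$, not $\Phi$. However you read the coefficient $E$, the equation for $\pr_q\Phi$ is forced by $-\lambda\pr_q\tilde E$. So the sign $\pr_q\Phi\le0$ is easy, but the lower bound $\pr_q\Phi\ge-C$ needs a uniform density bound as input, and Step~1 only gives growing bounds. As the paper explains, growing moments make $\pr_sE$ non-integrable.

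A uniform density bound amounts to the distribution staying concentrated at scale $e^{-t}$ around the graph $p=\Phi(t,q)$, i.e.\ to uniform moments of the rescaled corrected momentum $e^t\tilde p$. Your relation $\dot{\tilde p}=-\tilde p+\cO(\text{error})$ is silent at that scale. The term $-\tilde p$ is only the free contraction of $x-v$. The error (field at the particle minus $\tilde E(t,q)$) is a priori of size $e^{-t}$, since the rescaled position $2xe^{-t}=q+e^{-t}(x-v)$ differs from $q$ by $e^{-t}(x-v)$, with $x-v\approx-\lambda E_\infty=\cO(1)$. Multiplied by $e^t$, this is an $\cO(1)$ force, and improving it requires the very concentration you are trying to prove. (Your ``$e^{-2t}|p|$'' is this quantity only if $p=e^t(x-v)$, as in your formula for $\gamma$; your characteristic equations use $p=x-v$.)

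In the paper the sign does this work. The genuine Burgers structure gives $a'+a^2=\lambda\alpha\rho\ge0$ with $a(0)=0$, hence $\pr_q\Gamma\ge0$ with no information on $\rho$. Then $(\pr_s+\{\cdot,\K\})(\la p\ra^m\sigma)=-m\,\pr_q\Gamma\,\tfrac{p^2}{1+p^2}\la p\ra^m\sigma$ makes $\|\la p\ra^3\sigma\|_\infty$ and $\|\la p\ra\sigma\|_2$ nonincreasing. This momentum damping is what gives $\|\rho\|_\infty\les1$, and only then $0\le\pr_q\Gamma\les\beta$. You use the sign only to rule out shocks, so the bi-Lipschitz property and the ``mildly growing'' derivative bounds have nothing to rest on.

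Second, Step~3 misses the cancellation that drives the convergence. In your coordinates $(\mathcal X+\mathcal V)(t,a-b,b)=e^ta$ and $(\mathcal X-\mathcal V)(t,a-b,b)=(a-2b)e^{-t}-\lambda(1-e^{-2t})E_\infty(a)$. So along a characteristic the labels are $a=q$ and $b=\frac12\bigl[q-e^t(x-v)-\lambda(e^t-e^{-t})E_\infty(q)\bigr]$, and your equations for $\dot q$ and $\frac{d}{dt}(x-v)$ give exactly
\[
\dot b=\lambda\cosh t\,\bigl[E(t,x(t))-E_\infty(q)\bigr]-\frac{\lambda^2}{2}\bigl(1-e^{-2t}\bigr)E(t,x(t))\,\pr_qE_\infty(q).
\]
Knowing only $E-E_\infty=\cO(e^{-t})$, both terms are $\cO(1)$ and $b(t)$ need not converge. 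Correspondingly, $\nab\gamma$ grows like $e^t$ in your unrescaled variables, so ``$\cO(e^{-t})\|\nab\gamma\|_\infty$'' is not integrable.

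These two terms are the analogues of the terms (I) and (II) in the paper, each of size $(1-s)^{-1}$, i.e.\ $\cO(1)$ per unit $t$. They cancel because $E(t,x(t))-E_\infty(q)=\lambda e^{-t}E_\infty\pr_qE_\infty(q)+\cO(e^{-2t}\la b\ra)$. Establishing this is the heart of the proof. It uses:
\begin{enumerate}
\item the bounds on $B=\Gamma-\lambda\beta E$ and $\pr_qB$, which also remove the growing part $-\pr_q\Gamma\,\pr_p\sigma$ of $\pr_q\ga$;
\item the resulting second-order expansion $E=E_\infty+\lambda\sqrt{1-s^2}\,E_\infty\pr_qE_\infty+\cO(1-s)$;
\item the Taylor shift along $\Phi^1$, combined with \eqref{eq:cancel}.
\end{enumerate}
None of this is in your proposal. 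Your frozen-field heuristic ignores both the $e^{-t}$ correction of the field seen by the particle and the drift of $q$ inside $E_\infty(q)$, so it drops exactly these two $\cO(1)$ contributions.
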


\begin{remark}\leavevmode
\begin{enumerate}[label=(\arabic*),leftmargin=*,topsep=0.3em,itemsep=0.3em]
\item The global well-posedness conclusion of Theorem~\ref{thm:main} holds for both signs of $\lambda$, without any smallness assumption on the initial data. Indeed, the estimates for the Vlasov equation in Proposition~\ref{prop:time} depend on $|\lambda|$ and give a unique classical solution in the lens variables on $[0,1)$. Since $s=\tanh t$, this yields a unique global classical solution of \eqref{eq:VPH} for $t\ge0$ under the same initial-data assumptions, including in the attractive case $\lambda<0$. 
  \item The repulsive sign $\lambda>0$ is crucial to our large-data argument. With minor modifications (using a bootstrap argument), the proof also yields modified scattering provided $|\lambda|\| \la v\ra^3\mu_0\|_{L^{\infty}}^2$ is sufficiently small  when $\lambda<0$. See Remark \ref{rem:attractive}.
 \item The asymptotic behavior in \eqref{eq:mscatter} exhibits modified scattering with a power-law correction in the lens variables (see \eqref{eq:lens}). More precisely, the momentum correction is $\lambda\beta(s)E_\infty(q)$, where
    \[
        \beta(s)=\frac{s}{\sqrt{1-s^2}}\sim\frac{1}{\sqrt{2(1-s)}}\qquad\text{as }s\uparrow1.
    \]
The corresponding correction for the two-dimensional problem studied in \cite{BVR25,HK26} grows logarithmically in the lens variables as $s\uparrow1$. In the original variables, the corresponding trajectories for the two-dimensional
problem \cite{BVR25,HK26} are
    \begin{equation*}
        \begin{split}
  \mathcal{X}(t,x,v)&=x\cosh t+v\sinh t-\lambda te^{-t}E_{\oo}(x+v),\\
\mathcal{V}(t,x,v)&=v\cosh t+x\sinh t+\lambda te^{-t}E_{\oo}(x+v).
        \end{split}
    \end{equation*}
In the stable coordinate $x-v$, free trajectories satisfy $(x-v)(t)\to0$. In two dimensions the corrected trajectories still satisfy $\mathcal X-\mathcal V\to0$, only at the slower rate $te^{-t}$, whereas in one dimension $\mathcal X-\mathcal V\to-\lambda E_\infty(x+v)$: the interaction leaves a permanent
displacement.

\end{enumerate}
\end{remark}

\smallskip

Our second result establishes the existence of the modified wave operator.

\smallskip

\begin{theorem}\label{thm:main2}
Assume $\lambda\neq0$. Let $\mu_\infty$ satisfy
\[
\sum_{j+k\le2}\sum_{r\in\{2,\infty\}}
\bigl\|\langle p\rangle^3
\partial_q^j\partial_p^k\mu_\infty\bigr\|_{L^r}
<\infty.
\]
Set $E_\infty=E[\mu_\infty]$. There exists an initial datum $\mu_0$ satisfying \eqref{eq:data} such that the Cauchy problem \eqref{eq:VPH} with initial condition $\mu(0)=\mu_0$
admits a unique global classical solution $\mu$.
Moreover, $\mu$ satisfies \eqref{eq:mscatter}
with the prescribed profile $\mu_\infty$.
\end{theorem}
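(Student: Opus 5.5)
We will prove Theorem~\ref{thm:main2} by working entirely in the lens variables \eqref{eq:lens}, with $s=\tanh t\in[0,1)$ and phase-space variables $(q,p)$. Here $q$ is the unstable coordinate, so that $q=x+v$ up to normalization, and $p$ is the rescaled stable coordinate. In these variables the free harmonic flow is factored out. The equation for the profile $\gamma(s,q,p)$ becomes a Vlasov equation whose force is $\lambda E[\gamma](s,q)$ times the nonintegrable weight $\beta'(s)\sim (1-s)^{-3/2}$. As suggested by the form of $\mathcal{X},\mathcal{V}$, we remove the leading part of this force by a momentum correction. Concretely, we introduce $\widetilde\gamma(s,q,p)=\gamma(s,q,p+\lambda\beta(s)E_\infty(q))$, where $E_\infty=E[\mu_\infty]$ is now prescribed. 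The equation for $\widetilde\gamma$ is then a transport equation with force
\[
\lambda\beta'(s)\bigl(E[\gamma](s,q)-E_\infty(q)\bigr)
\]
plus transport terms involving $\beta(s)\pr_qE_\infty$. Since $E[\gamma]$ depends only on $\int\gamma^2\,dp$, and this integral is invariant under $p$-shifts, we have $E[\gamma]=E[\widetilde\gamma]$. The force therefore reads $\lambda\beta'(s)(E[\widetilde\gamma]-E[\mu_\infty])$. This makes the final-state problem well suited to a fixed point: we prescribe $\widetilde\gamma(1)=\mu_\infty$ and solve backward from $s=1$ to $s=0$.

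The key steps, in order, are as follows.

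\textbf{Step 1.} Write the equation for $\widetilde\gamma$ on $(0,1]$ in characteristic form. Starting from a terminal point $(q,p)$ at $s=1$, the characteristics satisfy $\dot Q=\,$(a bounded coefficient in $p$, or $p$ itself depending on the lens normalization), together with
\[
\dot P=\lambda\beta'(s)\bigl(E[\widetilde\gamma]-E_\infty\bigr)(Q)+\lambda\beta'(s)\bigl(E_\infty(Q)-E_\infty(q)\bigr)+\cdots,
\]
where the second term accounts for the $q$-dependence of the correction.

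\textbf{Step 2.} Define a map $\Phi$ on the ball
\[
\mathcal B=\Bigl\{\widetilde\gamma:\ \sup_{s}(1-s)^{-\delta}\|\la p\ra^3(\widetilde\gamma(s)-\mu_\infty)\|_{L^2\cap L^\infty}\le 1\Bigr\}
\]
for some $\delta>1/2$, restricted to $s\in[1-\varepsilon,1]$. The map $\Phi$ solves the linear transport equation backward using the force computed from the input.

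\textbf{Step 3.} Estimate this force. The difference $E[\widetilde\gamma]-E_\infty$ is bounded by $\|\widetilde\gamma^2-\mu_\infty^2\|_{L^1}\lesssim (1-s)^{\delta}$. The term $E_\infty(Q)-E_\infty(q)$ is bounded by $\|\pr_qE_\infty\|_{L^\infty}|Q(s)-q|$, and the displacement of $Q$ is $O((1-s)\la p\ra)$ once the $p$-drift is controlled. Both terms are therefore $O((1-s)^{\min(\delta,1)})$, and after multiplying by $\beta'\sim(1-s)^{-3/2}$ they are integrable near $s=1$ when $\delta>1/2$. The velocity displacement is then $O((1-s)^{\delta-1/2})$, and Gronwall gives $\|\Phi\widetilde\gamma-\mu_\infty\|\lesssim (1-s)^{\delta'}$.

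The main obstacle is closing this step with a power gain strictly better than $(1-s)^{1/2}$. The difference of solutions estimate needs one derivative of $\mu_\infty$ in $(q,p)$. Propagating that derivative costs a $\pr_q$ of the force, namely $\pr_qE[\widetilde\gamma]=\int\widetilde\gamma^2dp$, which is bounded, and $\pr_q^2E_\infty$, which is controlled by the assumption of two derivatives on $\mu_\infty$; this is exactly why $j+k\le 2$ is required. We will run the contraction in the weaker norm, without derivatives, and only prove boundedness in the $C^1$ norm. To absorb the nonintegrable growth of the $\pr_p$ derivatives, we will use that $\beta'(s)(Q(s)-q)$ is integrable. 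Because the data are prescribed at $s=1$ and only a short interval $[1-\varepsilon,1]$ is used, no sign condition on $\lambda$ is needed, consistent with $\lambda\neq0$.

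\textbf{Step 4.} At $s=1-\varepsilon$, that is, at a finite time $t_\varepsilon$, we obtain a $C^1$ profile with the weights in \eqref{eq:data}. We then invoke the global well-posedness from Proposition~\ref{prop:time}, valid for both signs as stated in the remark, to solve backward to $s=0$. This defines $\mu_0$, and it satisfies \eqref{eq:data} by the same propagation estimates. Uniqueness of the solution follows from that same well-posedness. Finally, \eqref{eq:mscatter} follows by undoing the lens transform: the convergence rate $(1-s)^{\delta}$ must be at least $e^{-t}\sim(1-s)^{1/2}$, which holds since $\delta>1/2$. One should check that the correction to $\mathcal X,\mathcal V$ evaluated at the argument $x-v$ matches $\lambda\beta(s)E_\infty(q)$; this is the consistency already built into Theorem~\ref{thm:main}.
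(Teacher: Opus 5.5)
There is a genuine gap, and it lies in the ansatz $\delta>1/2$. With your correction $\tga(s,q,p)=\ga(s,q,p+\lambda\beta(s)E_\oo(q))$, a direct computation from \eqref{eq:lensP} gives
\[
\pr_s\tga+(p+\lambda\beta E_\oo)\pr_q\tga+\bigl[\lambda\alpha\,(E-E_\oo)-\lambda\beta\,p\,\pr_qE_\oo-\lambda^2\beta^2E_\oo\pr_qE_\oo\bigr]\pr_p\tga=0 .
\]
Your Step 1 drops the last force term. It has size $\beta^2\simeq(1-s)^{-1}$, which is not integrable at $s=1$. Moreover, $E_\oo$ is nondecreasing with $\pr_qE_\oo=\int\mu_\oo^2\dd p$, so $E_\oo\pr_qE_\oo$ vanishes identically only if $\mu_\oo=0$. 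Hence for inputs in your ball $\mathcal B$, where $\alpha(E-E_\oo)=\cO((1-s)^{\delta-3/2})$ is integrable, the momentum along backward characteristics diverges logarithmically at $s=1$, and your map is not even defined. This term is cancelled only by the leading part of the field. The actual solution satisfies $E(s)=E_\oo+\lambda\sqrt{1-s^2}\,E_\oo\pr_qE_\oo+\cO(1-s)$ (compare \eqref{eq:asymptotic-E} and \eqref{eq:wave-field-expansion-final}), and $\alpha\sqrt{1-s^2}-\beta^2=1$. So the two non-integrable pieces combine into the bounded term $\lambda^2E_\oo\pr_qE_\oo$. Thus $E-E_\oo$ is exactly of order $(1-s)^{1/2}$, and no $\delta>1/2$ is consistent with the dynamics. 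Independently, the drift $\lambda\beta E_\oo\pr_q\tga$ generically moves $\tga$ a distance of order $\sqrt{1-s}$ from $\mu_\oo$. For the same reason your bound $Q(s)-q=\cO((1-s)\la p\ra)$ fails. The momentum along the flow is $p+\lambda\beta E_\oo$, so the displacement is $\cO(\sqrt{1-s})$, and then $\alpha\,(E_\oo(Q)-E_\oo(q))\simeq(1-s)^{-1}$ is again not integrable. Step 3 therefore treats as small exactly the critical $\sqrt{1-s}$ terms, whose divergent parts cancel only against each other.

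The paper builds this cancellation in by shifting with the field at the same time, $\tga=\ga(s,q,p+\lambda\beta(s)E(s,q))$. The continuity equation $\pr_sE=-j-\lambda\beta E\pr_qE$ then absorbs the $\alpha$-singularity exactly, and \eqref{eq:tK} has coefficients of size $\beta$. What remains is the term $\lambda\beta\,p\,\pr_q^2E\,\pr_p\tga$ in the equation for $\pr_q\tga$. It loses one power of $p$ in the weighted first-derivative bounds, and those bounds are also needed for the contraction, through the term $p(\rho_{(n)}-\rho_{(n-1)})\pr_p\tga_{(n)}$ in \eqref{eq:tga-picard-difference}. Your appeal to integrability of $\beta'(s)(Q(s)-q)$ does not handle this. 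That integrability is false in your variables, and even with $Q-q=\cO((1-s)\la p\ra)$ it costs exactly the power of $p$ one needs to save. The paper recovers the weight with the commuting field $Z=A\pr_q+C\pr_p$. It uses $\D^2A=\lambda\alpha\,\pr_qE\,A$ from \eqref{eq:AC-cancellation}, so that after two time integrations only the kernel $(1-s)\alpha$ appears. Finally, the identification in Step 4 is not automatic. The second-order field expansion gives $\Phi^2-\lambda\beta E(s,\Phi^1)\to p-\tfrac{\lambda^2}{2}E_\oo\pr_qE_\oo$. So the terminal datum must be the translate \eqref{eq:wave-terminal-profile}, not $\mu_\oo$ itself, to obtain \eqref{eq:mscatter} with the prescribed profile.
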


\begin{remark}\leavevmode
\begin{enumerate}[label=(\arabic*),leftmargin=*,topsep=0.3em,itemsep=0.3em]
\item Theorem \ref{thm:main2} holds for both signs of $\lambda$ and for data of arbitrary size. Since \eqref{eq:VPH} is invariant under $(t,x,v)\mapsto(-t,x,-v)$, it also gives modified wave operators as $t\to-\infty$.
\item Combining this with Theorem \ref{thm:main}, a profile $\mu_{-\oo}$ in the class of Theorem~\ref{thm:main2} is mapped to $\mu_0$ and then to $\mu_{+\oo}$. This defines a scattering map for data of arbitrary size if $\lambda>0$, and, for $\lambda<0$, whenever $\mu_0$ satisfies the smallness condition of Remark \ref{rem:attractive}. The map is not between the same spaces: Theorem~\ref{thm:main} only gives $\mu_{+\oo}\in L^2\cap L^\infty$, while Theorem \ref{thm:main2} requires two weighted derivatives.
\item Unlike \cite{HK26}, the convergence here is uniform with an explicit rate.

\end{enumerate}   
\end{remark}

\subsection{Difficulties and ideas of the proof}

In low dimensions, much less is known about the long-time behavior of the Vlasov--Poisson system; see \cite{Sch07,GPS10} for results in one dimension without external potential. Under free transport the density disperses at the rate $t^{-d}$, so that the self-consistent field decays like $t^{-(d-1)}$. For $d\ge4$ this leads to linear scattering \cite{CH06}, while $d=3$ is scattering-critical and requires a logarithmic correction along the characteristics \cite{CK16,IPWW22,FOPW23}. In $d=2$ the field decays only like $t^{-1}$, so that already the velocity correction diverges logarithmically, and in $d=1$ the field does not decay at all: $E[\mu]\to\pm M/2$ at spatial infinity, and particles keep being accelerated for all times.  Even with screened interactions the one-dimensional problem is critical: free scattering holds for $d\ge2$ \cite{IaRoKl26}, whereas for $d=1$ the screened force still decays only like $t^{-2}$, and Wei \cite{Wei25} proved sharp decay of the density for small Gevrey data. 

\medskip
A repulsive harmonic potential changes this picture. The unstable trapping at the origin separates trajectories exponentially, so that the density decays like $e^{-dt}$. In the lens variables of \cite{HK26}, the self-consistent force then carries the time weight $(1-s^2)^{-(4-d)/2}$, which is integrable for $d\ge3$ (linear scattering \cite{VR24,HK26} ) and logarithmically divergent for $d=2$ (modified scattering with a logarithmic correction \cite{BVR25,HK26}). For $d=1$ the weight is $(1-s^2)^{-3/2}$: it is not integrable, but its primitive is, so the problem becomes accessible while the correction is of power type. In the original variables, this correction is a persistent $O(1)$ displacement along the stable direction. Moreover, for $\lambda>0$, the one-dimensional identity $\partial_xE=\rho\ge0$  allows us to treat data of arbitrary size.

\subsubsection*{Control of moments}
We first use the hyperbolic lens transform introduced in \cite{HK26}, which sends $t=\oo$ to the finite endpoint $s=1$ and converts \eqref{eq:VPH} into
\[
\pr_s\gamma+p\pr_q\gamma+\lambda\alpha(s)E[\ga]\pr_p\gamma=0,
    \qquad \alpha(s)=(1-s^2)^{-3/2}.
\]

Unlike the two-dimensional problem, where the moments grow at most logarithmically, in one dimension the boundedness of $E$ only yields polynomial (in $(1-s)^{-1}$) growth of the momentum moments of $\ga$:
\begin{equation*}
    \|\la p\ra^a \ga(s)\|_{L^r}\les (1-s)^{-a/2},\qquad r\in\{2,\infty\}.
\end{equation*}
Hence, by the continuity equation,
\begin{equation*}
    \pr_s E(s,q)=-\int_\R p\,\ga^2(s,q,p)\dd p,    \qquad   |\pr_s E(s,q)|\le \|\la p\ra^a\ga(s)\|_{L^\infty}^2\int_\R\la p\ra^{1-2a}\dd p     \les (1-s)^{-a},
\end{equation*}
where $a>1$ is needed for the $p$-integral to converge. This bound is not integrable near $s=1$ for any admissible $a$, so the direct estimates do not  show that $E(s)$ has a limit as $s\to1$. 

Inspired by the first-order correction introduced in \cite{HX26} for long-range Riesz interactions, we remove the singular force by a momentum correction. Let $\Theta$ solve the Hamilton--Jacobi equation
\[
\pr_s\Theta+\tfrac12(\pr_q\Theta)^2=\lambda\alpha(s)\phi(s,q),\qquad \Theta(0,q)=0, \qquad \pr_q\phi=E,
\]
so that $\Gamma=\pr_q\Theta$ solves the forced Burgers equation
\begin{equation}\label{eq:Burg-intro}
\pr_s\Gamma+\Gamma\pr_q\Gamma=\lambda\alpha(s)E,\qquad \Gamma(0,q)=0.
\end{equation}
The generating function $S(s,q,p)=qp+\Theta(s,q)$ transforms the lens Hamiltonian $\frac{p^2}{2}-\lambda\alpha(s)\phi$ into
\[
\K=\frac{p^2}{2}+p\,\pr_q\Theta +\Big(\pr_s\Theta+\tfrac12(\pr_q\Theta)^2-\lambda\alpha(s)\phi\Big)=\frac{p^2}{2}+p\Gamma,
\]
so that $\sigma(s,q,p)=\gamma(s,q,p+\Gamma(s,q))$ satisfies
\[
\pr_s\sigma+\{\sigma,\K\}=0.
\]

Heuristically, $\Gamma$ is a single time integral of the singular force: along the
characteristics of \eqref{eq:Burg}, it is obtained by integrating $\lambda\alpha(s)E$
once, so that
\begin{equation*}
    \|\Gamma(s)\|_{L^\infty}\lesssim\int_0^s\alpha(\tau)\dd\tau=\beta(s)\simeq(1-s)^{-1/2},
\end{equation*}
and the same bound holds for its spatial derivatives. As a consequence,  the vector fields associated with  the Hamiltonian $\mathcal{K}$  are integrable. Hence  the moments of $\sigma$  are  bounded.  Derivatives are more delicate: the equation for $\pr_q\sigma$ contains $p\,(\pr_q^2\Gamma)\,\pr_p\sigma$, which costs one power of $p$, and we estimate $\pr_q\sigma$ and $\pr_p\sigma$ together in Proposition~\ref{prop:der}.

\subsubsection*{Large data bounds}
To deal with large data, we observe that the singular force has been replaced by a term whose sign can be exploited. Indeed, the Burgers equation for $\Gamma$ gives
\[
    (\pr_s+\Gamma\pr_q)\pr_q\Gamma+(\pr_q\Gamma)^2
    =\lambda\alpha(s)\int_\R\sigma^2(s,q,p)\dd p\ge0.
\]
For $\lambda>0$, the method of characteristics and an ODE argument yield $\pr_q\Gamma\ge0$. This sign produces momentum damping: the norms $\|\la p\ra^3\sigma\|_\infty$ and $\|\la p\ra\sigma\|_2$ are nonincreasing. These estimates give a uniform density bound and, in turn, bounds for $\Gamma$ and $\pr_q\Gamma$ that ensure continuation of the correction for every $s<1$. The sign of $\pr_q\Gamma$, rather than smallness, closes these estimates. For $\lambda<0$ one needs a bootstrap instead, see Remark~\ref{rem:attractive}.

\subsubsection*{Asymptotic behavior}
In terms of $\sigma$, the field equation gives $\|\pr_sE(s)\|_{L^\infty}\les1+\beta(s)$, so $E(s)$ converges uniformly to some $E_\infty$, at rate $\sqrt{1-s^2}$. We then follow $\gamma$ along the  trajectories as in \cite{HK26}
\[
 \Phi^1(s,q,p)=q-(1-s)\big(p+\lambda\beta(s)E_\infty(q)\big),\qquad \Phi^2(s,q,p)=p+\lambda\beta(s)E_\infty(q),
\]
which carry the momentum shift generated by $E_\infty$ and reach $q$ at $s=1$. Setting $\nu(s,q,p)=\gamma(s,\Phi^1,\Phi^2)$, the goal is to show that $\pr_s\nu$ is integrable on $[0,1)$. A direct computation gives
\[
    \pr_s\nu =\underbrace{-\lambda(1-s)\alpha(s)E_\infty(q)\,\pr_q\gamma(s,\Phi)}_{\rm (I)}+\underbrace{\lambda\alpha(s)\big[E_\infty(q)-E(s,\Phi^1)\big]\pr_p\gamma(s,\Phi)}_{\rm (II)}.
\]
Since $(1-s)\alpha(s)\simeq(1-s)^{-1/2}$ is integrable while $\alpha(s)$ is not, we need $\pr_q\gamma$ to be bounded in (I), and the field error in (II) to be of order $1-s$. The bounds for $\sigma$ alone give neither. Since $\pr_q\gamma=(\pr_q\sigma-\pr_q\Gamma\,\pr_p\sigma)(s,q,p-\Gamma)$ and $\pr_q\Gamma\simeq\beta(s)$, the term (I) is of size $(1-s)\alpha\beta\simeq(1-s)^{-1}$. In (II), both $E(s)-E_\infty$ and $\Phi^1-q$ are only $\cO(\sqrt{1-s})$, so (II) is also of size $(1-s)^{-1}$. Each term misses integrability by exactly a factor
$(1-s)^{-1/2}$.

The two losses are related, and we remove them together. The growing part of $\pr_q\gamma$ is $-\pr_q\Gamma\,\pr_p\gamma\approx-\lambda\beta\,\pr_qE\,\pr_p\gamma$. Moving it from (I) to (II), we rewrite
\[
\begin{aligned}
    \pr_s\nu={}&-\lambda(1-s)\alpha E_\infty(q) \big[\pr_q\gamma+\lambda\beta\,\pr_qE\,\pr_p\gamma\big](s,\Phi)\\
    &+\lambda\alpha\big[E_\infty(q)-E(s,\Phi^1)+\lambda(1-s)\beta E_\infty(q)\,\pr_qE(s,\Phi^1)\big]\pr_p\gamma(s,\Phi).
\end{aligned}
\]
It therefore suffices to show:
\begin{enumerate}[label=(\alph*),leftmargin=*]
\item the first bracket is bounded;
\item the second bracket is $\cO\big((1-s)\la p\ra\big)$;
\item $|\pr_p\gamma(s,\Phi)|\les\la p\ra^{-1}$.
\end{enumerate}
Property (c) follows from the weighted bound for $\la p\ra\pr_p\sigma$. For (a) and (b), we compare the nonlinear correction with its explicit leading part,
\[
B(s,q)=\Gamma(s,q)-\lambda\beta(s)E(s,q).
\]
In the Burgers equation for $\Gamma$, the singular forcing $\lambda\alpha E$ is cancelled exactly by the time derivative of $\lambda\beta E$, since $\beta'=\alpha$. Hence $B$ and $\pr_qB$ satisfy transport equations with integrable sources and remain bounded. This gives (a), because
\[
\pr_q\gamma+\lambda\beta\,\pr_qE\,\pr_p\gamma =(\pr_q\sigma-\pr_qB\,\pr_p\sigma)(s,q,p-\Gamma).
\]
It also gives one more order in the field. Substituting $\Gamma=B+\lambda\beta E$ into $\pr_sE=-\int_\R p\,\sigma^2\dd p-\Gamma\,\pr_qE$, the terms involving $\sigma$ and $B$ are bounded, while $-\lambda\beta E\,\pr_qE$ has size $\beta(s)$. Integrating from $s$ to $1$ yields
\[
  E(s,q)=E_\infty(q)+\lambda\sqrt{1-s^2}\,E_\infty(q)\pr_qE_\infty(q)+\cO(1-s).
\]
Since $\Phi^1(s,q,0)-q=-\lambda(1-s)\beta(s)E_\infty(q)$, a Taylor expansion shows that, at $p=0$, the second bracket equals
\[
    \lambda\big[2(1-s)\beta(s)-\sqrt{1-s^2}\big]E_\infty\pr_qE_\infty+\cO(1-s) =-\lambda\frac{(1-s)^2}{\sqrt{1-s^2}}E_\infty\pr_qE_\infty+\cO(1-s).
\]
That is, the $\sqrt{1-s}$ correction of the field cancels exactly against the shift of the spatial argument. For general $p$, $\Phi^1$ moves by $(1-s)p$, which costs $\cO((1-s)|p|)$. This proves (b). Combining (a)--(c),
\[
  \|\pr_s\nu(s)\|_{L^\infty}\les(1-s)\alpha(s),\qquad  \int_s^1(1-\tau)\alpha(\tau)\dd\tau=\sqrt{\tfrac{1-s}{1+s}}=e^{-t} \quad (s=\tanh t),
\]
so $\nu$ converges uniformly as $s\uparrow1$, and returning to the original variables gives Theorem~\ref{thm:main} with the rate $e^{-t}$.

\subsubsection*{Modified wave operator}
\smallskip
To construct the wave operator we solve \eqref{eq:lensP} backward from $s=1$. The
Burgers correction cannot be used for this: $\Gamma$ is defined by integrating
\eqref{eq:Burg-intro} forward from $s=0$, so it is not available at $s=1$. We therefore
shift the momentum by its explicit leading part $\lambda\beta(s)E(s,q)$, which involves
only the field at the same time. The resulting equation \eqref{eq:tK} has coefficients of
size $\beta(s)$, which is integrable, but the term $\lambda\beta\,p\,\pr_q^2E$ costs one
power of $p$ in the derivative estimates. We recover it with a vector field
$Z=A\pr_q+C\pr_p$ that commutes with the equation. By the continuity equation, the second
derivative of $A$ along the flow equals $\lambda\alpha(s)\,\pr_qE\,A$, so after two time
integrations only the integrable kernel $(1-s)\alpha(s)$ appears, as in the forward
problem.

\begin{remark}[A comparison with $3d$ Vlasov-Riesz system]
If we consider the three dimensional Vlasov-Riesz system,
\begin{equation*}
    \pr_t\mu+v\cdot \nab_x\mu+\lambda E[\mu]\cdot \nab_v\mu=0,\quad E=\nabla\phi=\nab(|x|^{-a}*\rho).
\end{equation*}
Using pseudo-conformal compactification, one obtains
\begin{equation*}
    \begin{split}
        \pr_s\ga+p\cdot \nab_q\ga+\lambda s^{a-2}E[\ga]\cdot \nab_p\ga=0.
    \end{split}
\end{equation*}
\cite{HP26} proves modified scattering for $1/2<a<1$, see also \cite{HX26}. When $a=1/2$, the singularity is the same order as in \eqref{eq:lensP}. Therefore, we expect our methods can be adapted to  the Vlasov-Riesz system in the very long range case $0<a\leq 1/2$. This is addressed in the work with M. Xie \cite{HX26-2}.
\end{remark}

\subsection{Organization}
In Section \ref{sec:pre}, we introduce the lens transform and collect preliminary
estimates. In Section \ref{sec:correction}, we introduce the Hamilton--Jacobi phase
correction and establish moment and derivative bounds for the corrected
distribution. Section \ref{sec:electric-field} is devoted to the asymptotic behavior of the
electric field. In Section \ref{sec:main}, we prove modified scattering and complete
the proof of the main theorem.  In
Section \ref{sec:wave}, we prove the existence of the modified wave operators. Appendix \ref{app:local} provides the finite-time
existence and continuation arguments used in the proof.

\section{Preliminaries}\label{sec:pre}
In this section, we collect the analytic tools used throughout the paper. We first introduce the lens transform and then recall the basic estimates for transport equations.
\subsection{Lens transform}

We use the lens transform introduced in \cite{HK26}
\begin{equation}\label{eq:lens}
 s=\tanh t,\qquad q=\frac{x}{\cosh t},\qquad
 p=v\cosh t-x\sinh t,\qquad \mu(t,x,v)=\gamma(s,q,p),
\end{equation}
which is symplectic. The equation for $\ga$ becomes
\begin{equation}\label{eq:lensP}
 \partial_s\gamma+p\pr_q\gamma+\lambda (1-s^2)^{-3/2}E[\ga](s,q)\pr_p\gamma=0,
 \qquad \rho(s,q)=\pr_qE(s,q)=\int_\R\gamma^2(s,q,p)\dd p,
\end{equation}
which can be rewritten in the Hamiltonian form
\begin{equation}\label{eq:lensH}
    \begin{split}
        \pr_s\ga+\{\ga,\H\}=0,\quad \H=\frac{p^2}{2}-\lambda\alpha(s)\phi(s,q),\quad \pr_q\phi=E.
    \end{split}
\end{equation}
For convenience, we write
\begin{equation}\label{eq:fF}
 \alpha(s)=(1-s^2)^{-3/2},\qquad \beta(s)=\frac{s}{\sqrt{1-s^2}},\qquad \beta'=\alpha.
\end{equation}
The initial condition is $\gamma(0,q,p)=\mu_0(q,p)$.  We note that $\beta(s)\sim (1-s)^{-1/2}$ is integrable near $s=1$ and $\alpha(s)\sim (1-s)^{-3/2}$ near $s=1$.

\subsection{Transport estimates}
The first frequently used lemmas follow from the method of characteristics.
\begin{lemma}\label{lem:Hamilton}
Suppose that $r\in[1,\infty]$ and
$\gamma\in C([0,T];L^r_{q,p})$ is a solution to
\[    \partial_s\gamma+\{\gamma,\mathcal{K}\}=g,    \qquad \gamma(s=0)=\gamma_0
\]
for some $\gamma_0\in L^r_{q,p}$. We assume that the Hamiltonian vector field is locally continuously differentiable in space and has at most linear growth uniformly on $[0,T]$. Then we have
\[
    \|\gamma(s)\|_{L^r_{q,p}}
    \le \|\gamma_0\|_{L^r_{q,p}}
    +\int_0^s\|g(\tau)\|_{L^r_{q,p}}\dd\tau,
    \qquad s\in[0,T].
\]
Moreover, if $g=\ga \cdot F+G$ where $F\leq 0$, then we have
\begin{equation*}
    \begin{split}
        \|\ga(s)\|_{L^r}\leq \|\ga_0\|_{L^r}+\int_0^s \|G(\tau)\|_{L^{r}}d\tau \quad s\in[0,T].
    \end{split}
\end{equation*}

\end{lemma}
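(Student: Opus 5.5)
The plan is to argue by the method of characteristics for the Hamiltonian flow of $\K$. Write $X(s)=(q,p)$ and let $Z(\tau;s,X)$ denote the flow of the vector field $(\pr_p\K,-\pr_q\K)$, with $Z(s;s,X)=X$. The hypotheses on $\K$ (locally $C^1$ in space, at most linear growth uniformly on $[0,T]$) give three facts:
\begin{itemize}
\item by Cauchy--Lipschitz the flow is globally defined on $[0,T]$, with no blow-up, thanks to Gronwall and the linear growth;
\item for each fixed $\tau,s$ the map $X\mapsto Z(\tau;s,X)$ is a $C^1$ diffeomorphism of $\R^2$;
\item since the vector field is divergence free, Liouville's theorem gives that this map preserves Lebesgue measure.
\end{itemize}

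Along characteristics the equation reads $\frac{d}{d\tau}\ga(\tau,Z(\tau;0,X))=g(\tau,Z(\tau;0,X))$. Integrating gives the Duhamel formula
\[
\ga(s,X)=\ga_0(Z(0;s,X))+\int_0^s g(\tau,Z(\tau;s,X))\dd\tau .
\]
For a solution that is merely in $C([0,T];L^r)$ one first justifies this formula for smooth data and forcing, which satisfy the equation classically, and then passes to the limit by density. The weak formulation guarantees that the limit coincides with $\ga$, because the transport equation with $C^1$ field has unique distributional solutions in this class. We then take $L^r$ norms. Minkowski's integral inequality together with the measure-preserving property gives $\|\ga_0\circ Z(0;s,\cdot)\|_{L^r}=\|\ga_0\|_{L^r}$ and $\|g(\tau,Z(\tau;s,\cdot))\|_{L^r}=\|g(\tau)\|_{L^r}$. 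This yields the first inequality for every $r\in[1,\infty]$.

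For the second statement, with $g=\ga F+G$ and $F\le0$, we solve the linear ODE along each characteristic. Set $W(\tau)=\exp\bigl(\int_\tau^s F(\sigma,Z(\sigma;s,X))\dd\sigma\bigr)$, which lies in $[0,1]$ because $F\le0$. Then
\[
\ga(s,X)=W(0)\,\ga_0(Z(0;s,X))+\int_0^s W(\tau)\,G(\tau,Z(\tau;s,X))\dd\tau ,
\]
and therefore $|\ga(s,X)|\le|\ga_0(Z(0;s,X))|+\int_0^s|G(\tau,Z(\tau;s,X))|\dd\tau$ pointwise. The same measure-preservation argument as before gives the bound.

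The only step needing care is the justification for rough solutions and for the implicit regularity of $F$. We need $F$ locally integrable along characteristics, which is how the lemma is used, for instance with $F=-\pr_q\Gamma\le0$. Should the ODE formulation be unavailable, the alternative is an energy argument for $1\le r<\infty$. Multiply by $|\ga|^{r-2}\ga$ and integrate; the Poisson bracket term vanishes by divergence-freeness. This gives $\frac{d}{ds}\|\ga\|_r^r\le r\int F|\ga|^r+r\int|G||\ga|^{r-1}\le r\|G\|_r\|\ga\|_r^{r-1}$, hence $\frac{d}{ds}\|\ga\|_r\le\|G\|_r$. The case $r=\infty$ then follows by letting $r\to\infty$.
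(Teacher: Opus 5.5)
Your proposal is correct and follows the paper's proof essentially step for step: characteristics of the Hamiltonian flow, Liouville's theorem for measure preservation, Minkowski's inequality, and an integrating factor bounded by one when $F\le0$. The only cosmetic difference is that you parametrize by the backward flow $Z(\tau;s,X)$ rather than composing with the forward flow $\Phi_s$. Your remarks on justifying the Duhamel formula for rough solutions, and your fallback $L^r$ energy argument, go beyond the paper, which simply takes the chain rule along the flow for granted.
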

\begin{proof}
Let $\Phi_s$ denote the Hamiltonian flow starting at time zero:
\[
\frac{\dd}{\dd s}\Phi_s(q,p)
=(\pr_p\mathcal K,-\pr_q\mathcal K)(s,\Phi_s(q,p)),
\qquad \Phi_0(q,p)=(q,p).
\]
The regularity and growth assumptions ensure that $\Phi_s$ is a globally defined $C^1$ diffeomorphism. Since the Hamiltonian vector field is divergence free, Liouville's formula gives
\[
\det D\Phi_s=1.
\]
Consequently, composition with $\Phi_s$ preserves every $L^r$ norm, including $r=\infty$.

Along the flow, the equation becomes
\[
\frac{\dd}{\dd s}\gamma(s,\Phi_s(q,p))=g(s,\Phi_s(q,p)).
\]
Integration therefore yields
\[
\gamma(s,\Phi_s(q,p))=\gamma_0(q,p)+\int_0^s g(\tau,\Phi_\tau(q,p))\dd\tau.
\]
Taking the $L^r_{q,p}$ norm and using Minkowski's inequality and preservation of measure, we obtain
\[
\begin{aligned}
\|\gamma(s)\|_{L^r}
&=\|\gamma(s)\circ\Phi_s\|_{L^r}\\
&\le\|\gamma_0\|_{L^r}+\int_0^s\|g(\tau)\circ\Phi_\tau\|_{L^r}\dd\tau\\
&=\|\gamma_0\|_{L^r}+\int_0^s\|g(\tau)\|_{L^r}\dd\tau.
\end{aligned}
\]

If $g=F\gamma+G$ with $F\le0$, the integrating-factor formula along the same flow gives
\[
\begin{aligned}
\gamma(s,\Phi_s(q,p))
&=e^{\int_0^s F(\tau,\Phi_\tau(q,p))\dd\tau}\gamma_0(q,p)\\
&\quad+\int_0^s e^{\int_\tau^s F(u,\Phi_u(q,p))\dd u} G(\tau,\Phi_\tau(q,p))\dd\tau.
\end{aligned}
\]
Both exponential factors are bounded by one. Hence
\[
|\gamma(s,\Phi_s(q,p))|\le|\gamma_0(q,p)|+\int_0^s|G(\tau,\Phi_\tau(q,p))|\dd\tau.
\]
Taking the $L^r$ norm as above proves the second estimate.
\end{proof}

To estimate the correction $\Gamma$ and its derivatives in \eqref{eq:Burg} , we also use the following $L^\infty$ estimate for transport equations with a zeroth-order term. Unlike in Lemma \ref{lem:Hamilton}, the transport field need not be divergence free.

\begin{lemma}\label{lem:prop}
    Let \(u\) be a bounded continuous distributional solution of
\[
\partial_su+b(s,z)\cdot\nabla_z u+c(s,z)u=g(s,z)
\]
on \([0,T]\times\mathbb R^d\).  Assume that $b\in L^1((0,T);C^1_{\mathrm{loc}}(\R^d;\R^d))$ and  $c,g\in L^1([0,T);L^{\oo}(\R^d))$ satisfy
\[
|b(s,z)|\le K(1+|z|),\qquad c(s,z)\ge-a(s),\qquad |g(s,z)|\le G(s),
\]
where \(a,G\) are nonnegative continuous functions. Then
\[
{\|u(s)\|_\infty\le e^{\int_0^s a(r)\,dr}\|u(0)\|_\infty+\int_0^s e^{\int_r^s a(v)\,dv}G(r)\,dr.}
\]
In particular, if \(c\ge0\), then
\[
\|u(s)\|_\infty \le \|u(0)\|_\infty+\int_0^sG(r)\,dr.
\]
\end{lemma}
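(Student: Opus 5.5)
We prove Lemma~\ref{lem:prop} by the method of characteristics, in the same way as Lemma~\ref{lem:Hamilton}. The only new feature is the zeroth-order term $c\,u$, which we handle with an integrating factor.

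\emph{Step 1: characteristics.} Let $Z(s;r,z)$ solve $\frac{d}{ds}Z=b(s,Z)$ with $Z(r;r,z)=z$. Since $b\in L^1((0,T);C^1_{\mathrm{loc}})$, Carathéodory theory gives local existence and uniqueness. The linear growth bound $|b|\le K(1+|z|)$ and Gronwall's inequality give $|Z(s)|\le (1+|z|)e^{K|s-r|}$, so characteristics neither blow up nor leave every compact set. Hence $z\mapsto Z(s;r,z)$ is a global $C^1$ diffeomorphism of $\R^d$ (the inverse flow $Z(r;s,\cdot)$ exists by the same argument). In particular every point $(s,z)$ is reached by the characteristic from $z_0=Z(0;s,z)$.

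\emph{Step 2: the ODE along characteristics.} For a $C^1$ solution, the chain rule gives
\[
\frac{d}{d\tau}u(\tau,Z(\tau))=-c(\tau,Z(\tau))\,u(\tau,Z(\tau))+g(\tau,Z(\tau)).
\]
Let $w(\tau)=u(\tau,Z(\tau))$ and $m(\tau)=\int_0^\tau c(\sigma,Z(\sigma))\,d\sigma$. Then
\[
w(s)=e^{-m(s)}w(0)+\int_0^s e^{-(m(s)-m(r))}g(r,Z(r))\,dr.
\]
Since $c\ge -a$, we have $-(m(s)-m(r))\le \int_r^s a$. Using also $|g|\le G$ and taking the supremum over $z$ (every point is reached by Step 1), we get
\[
\|u(s)\|_\infty\le e^{\int_0^s a}\|u(0)\|_\infty+\int_0^s e^{\int_r^s a}G(r)\,dr.
\]
When $c\ge0$ we may take $a=0$, which gives the second estimate.

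\emph{Step 3 (main obstacle): merely distributional solutions.} The solution $u$ is only assumed bounded, continuous and distributional, so the chain rule in Step 2 is not directly justified. Two routes are available.

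\begin{enumerate}
\item \emph{Uniqueness argument.} Define $\tilde u$ by the Duhamel formula of Step 2 along characteristics. Since $b$ is locally Lipschitz in $z$, the change of variables $z\mapsto Z(s;0,z)$ is a $C^1$ diffeomorphism with Jacobian given by Liouville's formula. One then checks directly, by testing against $\varphi\in C_c^\infty$ and changing variables, that $\tilde u$ is a distributional solution. Finally, uniqueness of bounded distributional solutions for locally Lipschitz fields (DiPerna--Lions theory; here classical) gives $u=\tilde u$.
\item \emph{Mollification.} Mollify $u$ in $z$ to get $u_\varepsilon=u*\eta_\varepsilon$, which solves the same equation with an additional commutator error $[b\cdot\nabla,\ast\eta_\varepsilon]u+[c,\ast\eta_\varepsilon]u$. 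The DiPerna--Lions commutator lemma shows this error tends to $0$ in $L^1_{\mathrm{loc}}$. Apply Step 2 to $u_\varepsilon$ and pass to the limit, using continuity of $u$ to recover pointwise bounds.
\end{enumerate}

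I would use the first route, since local Lipschitz regularity of $b$ makes the change of variables classical.
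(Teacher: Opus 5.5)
Your Steps 1--2 are the paper's argument: the paper follows the backward characteristic $X(\tau)$ with $X(s)=z$ and uses the linear growth bound with Gronwall to rule out escape. It then integrates $\frac{d}{d\tau}u(\tau,X(\tau))+c\,u=g$ with an integrating factor and takes the supremum over $z$, exactly as you do. The paper applies the chain rule directly without comment, so your Step 3 (route 1, with the bound first obtained a.e.\ and then everywhere by continuity of $u$) is a justification the paper omits, not a different route.
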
 
\begin{proof}
Fix $s\in[0,T]$ and $z\in\R^d$. Consider a characteristic satisfying
\[
\frac{\dd}{\dd\tau}X(\tau)=b(\tau,X(\tau)), \qquad X(s)=z.
\]
The continuity and linear growth of $b$ ensure that such a characteristic exists on $[0,s]$. Indeed, Gronwall's inequality gives
\[
1+|X(\tau)|\le (1+|z|)e^{K(s-\tau)}, \qquad 0\le\tau\le s,
\]
which prevents finite-time escape to infinity. 

Along this curve, the chain rule gives
\[
\frac{\dd}{\dd\tau}u(\tau,X(\tau))+c(\tau,X(\tau))u(\tau,X(\tau))=g(\tau,X(\tau)).
\]
Multiplying by an integrating factor and integrating from $0$ to $s$, we obtain
\[
\begin{aligned}
u(s,z)
&=e^{-\int_0^s c(r,X(r))\dd r}\,u(0,X(0))\\
&\quad+\int_0^s e^{-\int_\tau^s c(r,X(r))\dd r} g(\tau,X(\tau))\dd\tau.
\end{aligned}
\]
Since $c(r,X(r))\ge-a(r)$ and $|g(\tau,X(\tau))|\le G(\tau)$, it follows that
\[
|u(s,z)|\le e^{\int_0^s a(r)\dd r}\|u(0)\|_\infty+\int_0^s e^{\int_\tau^s a(r)\dd r}G(\tau)\dd\tau.
\]
Taking the supremum over $z\in\R^d$ proves the claimed estimate. If $c\ge0$, we may take $a=0$, yielding
\[
\|u(s)\|_\infty\le\|u(0)\|_\infty+\int_0^sG(\tau)\dd\tau.
\]
\end{proof}

\subsection{Notation}
For $1\le r\le\infty$, we abbreviate $L^r_{q,p}$ by $L^r$ when the underlying variables are clear from context. If a function depends on only one of the phase-space variables, the norm is taken over that variable alone. We also write $\|\cdot\|_r$ in place of $\|\cdot\|_{L^r}$. Unless otherwise specified, all these norms are
evaluated at a fixed time.

We write $A\lesssim B$ if $A\le CB$ for some constant $C>0$. The notation $\cO(h(s))$ denotes a quantity bounded in absolute value by $C|h(s)|$, uniformly in the remaining variables and in the limiting regime under consideration. Constants may depend on $C_0$ and $\lambda$, but are independent of time and may vary from line to line.

\section{A Hamilton--Jacobi phase correction}\label{sec:correction}
Throughout Sections \ref{sec:correction}-\ref{sec:main}, we assume $\lambda>0$. The finite-time theory in Appendix \ref{app:local} and Section \ref{sec:wave} allow either sign of $\lambda$. We begin with the Hamilton--Jacobi equation
\begin{equation*}
    \begin{split}
        \pr_s\Theta +\frac{1}{2}(\nab_q\Theta)^2=\lambda\alpha(s)\phi(s,q), \quad \Theta(0,q)=0.
    \end{split}
\end{equation*}
We note here a similar correction has appeared in \cite{GV00I,GV00II} before. Differentiating in $q$ and writing $\Gamma(s,q)=\nab_q\Theta$, we obtain the forced Burgers equation
\begin{equation}\label{eq:Burg}
    \begin{split}
\pr_s\Gamma+\Gamma\pr_q\Gamma=\lambda \alpha(s)E(s,q), \qquad \Gamma(0,q)=0.
    \end{split}
\end{equation}
The generating function $S(s,q,p)=qp+\Theta(s,q)$ gives the canonical symplectic transformation
\begin{equation}\label{eq:sigma}
    \begin{split}
\sigma(s,q,p):=\ga(s,q,p+\Gamma(s,q)),
    \end{split}
\end{equation}
which satisfies
\begin{equation}\label{eq:sigma-K}
    \begin{split}
        \pr_s\sigma+\{\sigma,\K \}=0, \quad  \K=\frac{p^2}{2}+p\Gamma,
    \end{split}
\end{equation}
where $\{f,g\}=\pr_q f\cdot \pr_p g-\pr_p f\cdot \pr_q g$. All estimates below are first proved on the common local existence interval of $\gamma$ and $\Gamma$. The construction and continuation criteria are supplied in Proposition~\ref{prop:time}.

\begin{proposition}\label{prop:mom}
Under \eqref{eq:data}, the correction $\Gamma$ exists for every $s<1$, and 
\begin{equation}\label{eq:mom}
    \begin{split}
       \|\langle p\rangle^3\sigma(s)\|_{L^{\oo}} \leq C_0, \quad      \|\langle p\rangle \sigma(s)\|_{L^2}\leq C_0,\\
       0\leq \pr_q\Gamma\les \beta(s), \quad \|\Gamma(s)\|_{L^{\oo}}\les\beta(s).
    \end{split}
\end{equation}
As a consequence,
\begin{equation}\label{eq:dE}
    \begin{split}
        \|\pr_q E(s)\|_{L^{\infty}}=\|\rho(s)\|_{L^{\oo}}
        \les  1.
    \end{split}
\end{equation}

\end{proposition}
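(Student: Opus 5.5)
We argue by a continuity argument on the common existence interval $[0,s_*)$ of $\gamma$ and $\Gamma$ given by Proposition~\ref{prop:time}, and show that every bound in \eqref{eq:mom} holds with constants independent of $s_*$. The continuation criterion then gives $s_*=1$. The key structural input is the sign of $\pr_q\Gamma$, so we establish it first.

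\emph{Step 1: $\pr_q\Gamma\ge0$.} Differentiating \eqref{eq:Burg} in $q$ gives, for $w=\pr_q\Gamma$,
\[
(\pr_s+\Gamma\pr_q)w+w^2=\lambda\alpha(s)\rho(s,q)\ge0,\qquad w(0)=0.
\]
Along a characteristic $\dot Q=\Gamma(s,Q)$ this is a Riccati inequality $\dot w\ge -w^2$ with $w(0)=0$. Comparison with the solution $w\equiv0$ of $\dot w=-w^2$ gives $w\ge0$ as long as the characteristic exists. Here we use $\lambda>0$. Characteristics exist because $\Gamma$ is bounded, since $|\pr_s\Gamma+\Gamma\pr_q\Gamma|\le\lambda\alpha M/2$. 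Applying Lemma~\ref{lem:prop} with $c=0$ then gives $\|\Gamma(s)\|_\infty\le\lambda\frac M2\int_0^s\alpha=\frac{\lambda M}{2}\beta(s)$ directly, so this bound needs no bootstrap.

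\emph{Step 2: moment bounds for $\sigma$.} The flow of $\K=\frac{p^2}2+p\Gamma$ is
\[
\dot q=p+\Gamma,\qquad \dot p=-p\,\pr_q\Gamma.
\]
Hence $\frac{d}{ds}\la p\ra^a=-a\la p\ra^{a-2}p^2\pr_q\Gamma\le0$ along characteristics. Since $\sigma$ is transported, $\la p\ra^3\sigma$ satisfies $\pr_s u+\{u,\K\}=Fu$ with $F=-3p^2\la p\ra^{-2}\pr_q\Gamma\le0$. The second part of Lemma~\ref{lem:Hamilton} (the flow is Hamiltonian and hence measure preserving) then yields $\|\la p\ra^3\sigma(s)\|_\infty\le\|\la v\ra^3\mu_0\|_\infty\le C_0$. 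The same argument with weight $\la p\ra$ in $L^2$ gives $\|\la p\ra\sigma(s)\|_2\le C_0$. The vector field has linear growth in $p$, as Lemma~\ref{lem:Hamilton} requires, provided $\Gamma$ and $\pr_q\Gamma$ are bounded on $[0,s_*)$; this holds on compact subintervals by the local theory.

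\emph{Step 3: density and upper bound for $\pr_q\Gamma$.} Since $\rho(s,q)=\int\gamma^2\,dp=\int\sigma^2(s,q,p)\,dp$ (a shift in $p$), we get
\[
\|\rho\|_\infty\le C_0^2\int\la p\ra^{-6}\,dp\les1,
\]
which is \eqref{eq:dE}. Returning to the Riccati equation, we drop $-w^2\le0$ and apply Lemma~\ref{lem:prop} with $c=0$ (or simply integrate along characteristics). This gives
\[
0\le w\le\lambda\|\rho\|_\infty\int_0^s\alpha\les\beta(s).
\]

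\emph{Step 4: continuation.} The bounds on $\Gamma$ and $\pr_q\Gamma$ are finite for every $s<1$, and the moment bounds are uniform. The continuation criterion of Proposition~\ref{prop:time} therefore extends $\Gamma$ and $\sigma$ to all $s<1$.

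The main obstacle is rigor in Step 1 near possible shock formation of the Burgers equation. The sign $w\ge0$ is exactly what prevents blow-up, since for the Riccati equation blow-up occurs only through $w\to-\infty$. One must argue within the continuation framework that $w$ stays finite, so that characteristics do not cross. I would do this by showing that on $[0,s_*)$ the quantity $w$ is bounded above (Step 3) and below (Step 1), so the $C^1$ norm of $\Gamma$ cannot blow up before $s=1$.
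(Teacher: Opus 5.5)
Your proposal is correct and follows essentially the same route as the paper: the Riccati sign argument along Burgers characteristics (the paper uses the integrating factor $\frac{d}{ds}\bigl[a\,e^{\int_0^s a}\bigr]\ge0$ where you compare with $w\equiv0$), then the nonpositive zeroth-order coefficient $-m\,\pr_q\Gamma\,p^2\la p\ra^{-2}$ in the second part of Lemma~\ref{lem:Hamilton}, then the density bound, and finally the continuation criterion. One small correction: in Step 3, Lemma~\ref{lem:prop} with $c=0$ does not literally apply, because the source $\lambda\alpha\rho-w^2$ is not bounded in absolute value by $\lambda\alpha\|\rho\|_\infty$; instead use your alternative (integrate the one-sided inequality $\dot w\le\lambda\alpha\rho$ along characteristics and combine with $w\ge0$), or, as the paper does, take $c=\pr_q\Gamma\ge0$ and $g=\lambda\alpha\rho$.
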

\begin{proof}
We first establish $\pr_q\Gamma\geq0$. Differentiating \eqref{eq:Burg} gives
\begin{equation}\label{eq:dBurg}
    \begin{split}
        \pr_s (\pr_q\Gamma)+\Gamma\pr_q(\pr_q\Gamma)=\lambda \alpha(s)\pr_q E(s,q)-(\pr_q\Gamma)^2, \qquad \pr_q\Gamma(0,q)=0,
    \end{split}
\end{equation}
where $\pr_q E=\rho\geq0$. Consider the characteristic ODE for fixed $q\in \R$, 
\begin{equation*}
    \begin{split}
        \frac{d}{ds}Q(s)=\Gamma(s,Q(s)),\quad Q(0)=q.
    \end{split}
\end{equation*}
Let $a(s):=\pr_q\Gamma(s,Q(s))$, we have
\begin{equation*}
    \begin{split}
        a^\prime(s)+a^2(s)=\lambda \alpha(s) \rho(s,Q(s))\geq 0,\quad a(0)=0.
    \end{split}
\end{equation*}
Therefore, we have
\begin{equation*}
    \begin{split}
        \frac{d}{ds}\off{a(s)\exp\of{\int_0^s a(\tau)\dd\tau} }=\lambda \alpha(s)\rho(s,Q(s))\exp \of{\int_0^s a(\tau)\dd\tau}\geq 0.
    \end{split}
\end{equation*}
This proves $\pr_q\Gamma\geq 0$. We next estimate the velocity moments. For $m\ge0$,
\begin{equation*}
    \pr_s(\la p\ra^m\sigma)+\{\la p\ra^m\sigma,\K\}=\sigma\{\la p\ra^m,\K\}    =-m(\pr_q\Gamma)\frac{p^2}{1+p^2}\la p\ra^m\sigma .
\end{equation*}
Since $\pr_q\Gamma\ge0$, the coefficient $-m(\pr_q\Gamma)p^2/(1+p^2)$ is nonpositive. As $\sigma(0)=\mu_0$, the second part of Lemma~\ref{lem:Hamilton} (with $G=0$) gives
\begin{equation*}
    \|\la p\ra^m\sigma(s)\|_{L^r}\le\|\la p\ra^m\mu_0\|_{L^r},    \qquad m\ge0,\ r\in[1,\infty].
\end{equation*}
Taking $(m,r)=(3,\infty)$ and $(m,r)=(1,2)$ proves the first two estimates in \eqref{eq:mom}.
The velocity translation in \eqref{eq:sigma} leaves the density unchanged, so
\[
 \rho(s,q)=\int_\R\sigma^2(s,q,p)\dd p \leq C_0^2\int_\R(1+p^2)^{-3}\dd p  \les 1.
\]
This proves \eqref{eq:dE}. Applying Lemma \ref{lem:prop} with $u=\pr_q\Gamma, b=\Gamma$ and $c=\pr_q\Gamma\geq 0$ to \eqref{eq:dBurg}  gives
$$0\leq\pr_q\Gamma\les \beta(s).$$
Likewise, \eqref{eq:Burg} and $\|E\|_\infty\leq M/2$ give $\|\Gamma(s)\|_\infty\leq\lambda M\beta(s)/2$.

\smallskip
Finally, for every $T<1$, the bounds for $\Gamma$ and $\pr_q\Gamma$ are uniform on $[0,T]$, the continuation criterion in Proposition \ref{prop:time} therefore extends the  correction to every $s<1$.

\end{proof}

\begin{remark}[Small data with $\lambda<0$]\label{rem:attractive}
In Sections \ref{sec:correction}--\ref{sec:main}, the sign $\lambda>0$ enters only through the inequality $\pr_q\Gamma\ge0$. It is used to rule out shocks in \eqref{eq:Burg}, and to discard the zeroth-order terms involving $\pr_q\Gamma$ when Lemmas \ref{lem:Hamilton} and \ref{lem:prop} are applied. When $\lambda<0$, the equation along the characteristics of \eqref{eq:Burg},
\begin{equation}\label{eq:Ric}
    \frac{\dd a}{\dd s}+a^2=\lambda\alpha(s)\rho\big(s,Q(s)\big),    \qquad a(s)=\pr_q\Gamma\big(s,Q(s)\big),
\end{equation}
is focusing and this sign is lost. For small data it can be replaced by a bootstrap. Set $\delta:=|\lambda|\,\|\la v\ra^3\mu_0\|_{L^\oo}^2$ and assume that, on some interval $[0,T]$ with $T<1$,
\begin{equation}\label{eq:boot-attr}
    \|\pr_q\Gamma(s)\|_{L^\oo}\le K\delta\,\beta(s),\qquad 0\le s\le T .
\end{equation}
Since $\int_0^1\beta(s)\dd s=1$, the argument in the proof of Lemma \ref{lem:Hamilton} gives $\|\la p\ra^m\sigma(s)\|_{L^r}\le e^{mK\delta}\|\la p\ra^m\mu_0\|_{L^r}$. Hence
\[
|\lambda|\,\|\rho(s)\|_{L^\oo}\le c_1e^{6K\delta}\delta,\qquad c_1=\int_\R\la p\ra^{-6}\dd p.
\]
Integrating \eqref{eq:Ric}  and using
\begin{equation*}
\int_0^s\beta(\tau)^2\dd\tau=\operatorname{artanh}s-s\le\beta(s),
\end{equation*}
we obtain
\begin{equation*}
\|\pr_q\Gamma(s)\|_{L^\oo}\le\int_0^s|\lambda|\alpha(\tau)\|\rho(\tau)\|_{L^\oo}\dd\tau+\int_0^s\|\pr_q\Gamma(\tau)\|_{L^\oo}^2\dd\tau\le\big(c_1e^{6K\delta}+K^2\delta\big)\,\delta\,\beta(s).
\end{equation*}
With $K=4c_1$ and $\delta$ sufficiently small, this improves \eqref{eq:boot-attr} to $\frac K2\delta\beta(s)$. The continuation criterion in Proposition \ref{prop:time} then shows that $\Gamma$ exists on $[0,1)$ and that \eqref{eq:boot-attr} holds there.

In the remaining estimates (Proposition~\ref{prop:der}, Section~\ref{sec:electric-field} and Section~\ref{sec:main}), $\pr_q\Gamma\ge0$ is used only to give the zeroth-order coefficients a favorable sign. Replacing it by $|\pr_q\Gamma|\le K\delta\beta(s)$ and using $\int_0^1\beta=1$ only changes the constants by a factor $e^{CK\delta}$. Consequently, Theorem~\ref{thm:main} also holds for $\lambda<0$, provided $|\lambda|\,\|\la v\ra^3\mu_0\|_{L^\oo}^2$ is sufficiently small. 
\end{remark}

Next, we bound the derivatives.

\begin{proposition}\label{prop:der}
    Under \eqref{eq:data}, the following estimates hold for $0\leq s<1$:
    \begin{equation}
       \begin{split}
           \|\langle p\ra\pr_p\sigma\|_{L^{\oo}}+\|\pr_q\sigma(s)\|_{L^{\oo}}&\leq C, \\
           \|\la p\ra \pr_q\sigma\|_{L^{\oo}}&\leq C(1+\beta(s)).
       \end{split} 
    \end{equation}
Moreover,
\begin{equation}
    \begin{split}
     \|\nab_q^2\Gamma(s)\|_{L^{\oo}}\leq C\beta(s),\quad     \|\pr_q^2 E(s)\|_{\infty}\leq C.
    \end{split}
\end{equation}

\end{proposition}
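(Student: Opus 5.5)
The plan is to differentiate the transported equation \eqref{eq:sigma-K} and close a coupled system of transport inequalities for $(\la p\ra\pr_p\sigma,\pr_q\sigma,\pr_q^2\Gamma)$. The structure is: Gronwall with integrable time kernels, the sign $\pr_q\Gamma\ge0$ from Proposition~\ref{prop:mom}, and the bounds \eqref{eq:mom}. Since $\pr_p\K=p+\Gamma$ and $\pr_q\K=p\,\pr_q\Gamma$, the equation reads
\[
\pr_s\sigma+(p+\Gamma)\pr_q\sigma-p\,\pr_q\Gamma\,\pr_p\sigma=0 .
\]
Differentiating it gives
\begin{align*}
&\pr_s(\pr_p\sigma)+\{\pr_p\sigma,\K\}-\pr_q\Gamma\,\pr_p\sigma=-\pr_q\sigma,\\
&\pr_s(\pr_q\sigma)+\{\pr_q\sigma,\K\}+\pr_q\Gamma\,\pr_q\sigma=p\,\pr_q^2\Gamma\,\pr_p\sigma .
\end{align*}
Multiplying the first equation by $\la p\ra$ adds the zeroth-order term $\pr_q\Gamma\, p^2\la p\ra^{-2}$, exactly as in the proof of Proposition~\ref{prop:mom}. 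The net zeroth-order coefficient for $\la p\ra\pr_p\sigma$ is therefore $-\pr_q\Gamma\,\la p\ra^{-2}\ge-C\beta(s)$. This has the wrong sign, but since $\int_0^1\beta=1$ the exponential factor in Lemma~\ref{lem:Hamilton} (in its integrating-factor form) or Lemma~\ref{lem:prop} is bounded by a constant. The $\pr_q\sigma$ equation has a favourable zeroth-order term.

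For $\pr_q^2\Gamma$, I would differentiate \eqref{eq:dBurg} once more:
\[
(\pr_s+\Gamma\pr_q)\pr_q^2\Gamma+3\pr_q\Gamma\,\pr_q^2\Gamma=\lambda\alpha(s)\pr_q\rho .
\]
Here $c=3\pr_q\Gamma\ge0$, so Lemma~\ref{lem:prop} gives $\|\pr_q^2\Gamma(s)\|_\infty\le\lambda\int_0^s\alpha\|\pr_q\rho\|_\infty$. Since the momentum shift in \eqref{eq:sigma} preserves $\rho=\int\sigma^2\dd p$, we have $\pr_q\rho=2\int\sigma\pr_q\sigma\dd p$. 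Hence
\[
|\pr_q^2E|=|\pr_q\rho|\les\|\la p\ra^3\sigma\|_\infty\|\pr_q\sigma\|_\infty\les\|\pr_q\sigma\|_\infty .
\]
Consequently $\|\pr_q^2\Gamma(s)\|_\infty\les\beta(s)\sup_{\tau\le s}\|\pr_q\sigma(\tau)\|_\infty$. This settles both claims of the second display, once $\|\pr_q\sigma\|_\infty$ is shown to be bounded.

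Now let $X(s)=\sup_{\tau\le s}\bigl(\|\la p\ra\pr_p\sigma\|_\infty+\|\pr_q\sigma\|_\infty\bigr)$. The source for $\pr_q\sigma$ is bounded by $|\pr_q^2\Gamma|\,\la p\ra|\pr_p\sigma|\les\beta X^2$. The source for $\la p\ra\pr_p\sigma$ is $\la p\ra\pr_q\sigma$. This is the main obstacle: it costs one power of $p$, so the $\pr_p\sigma$ estimate needs the weighted bound $\la p\ra\pr_q\sigma\les1+\beta$. That weighted bound in turn sees the source $p\la p\ra\,\pr_q^2\Gamma\,\pr_p\sigma$, and raising weights does not close because the data only control $\la v\ra\nabla\mu_0$.

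To handle this I would first bound $\la p\ra\pr_q\sigma$ with the source $|p|\la p\ra\beta|\pr_p\sigma|\le\beta\la p\ra\cdot\|\la p\ra\pr_p\sigma\|_\infty$. I would then exploit the characteristic structure: along the flow of $\K$, $\dot P=-P\,\pr_q\Gamma$, so $|P|$ is nonincreasing forward in time. One extra factor $\la p\ra$ evaluated at the endpoint can thus be moved inside the Duhamel integral, where it multiplies $\la P(\tau)\ra|\pr_p\sigma|$. The aim is to bound it by using the $\pr_p$-equation along the same characteristics, that is, by estimating the combination $\la p\ra\pr_q\sigma$ together with $\la p\ra^{2}\pr_p\sigma$ only in the mixed form dictated by the flow. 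If this fails, the fallback is to accept a loss of one factor $\beta$ in $\la p\ra\pr_q\sigma$ directly. This would give $\|\la p\ra\pr_q\sigma\|_\infty\les1+\int_0^s\beta\,X\,\dd\tau\cdot(\text{growth})$, which is consistent with the claimed $1+\beta$.

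Since $1+\beta$ is integrable, inserting $\|\la p\ra\pr_q\sigma\|_\infty\les1+\beta$ into the $\pr_p$-equation gives $\|\la p\ra\pr_p\sigma\|_\infty\le C(C_0+\int_0^1(1+\beta))$. Combining all pieces yields an inequality of the form
\[
X(s)\le C_0'+C\int_0^s\bigl(1+\beta(\tau)\bigr)\bigl(X(\tau)+X(\tau)^2\bigr)\dd\tau .
\]
The quadratic term is a concern for large data. I would avoid it by not bounding $\pr_q^2\Gamma$ by $X$ inside the $\pr_q\sigma$ equation, and instead using the linear Gronwall chain
\[
\pr_q^2\Gamma\les\beta X,\qquad \|\pr_q\sigma(s)\|_\infty\le C_0+C\int_0^s\beta(\tau)X(\tau)\dd\tau,
\]
where the factor $\|\la p\ra\pr_p\sigma\|_\infty$ is controlled through its own linear equation. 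Gronwall with the integrable kernel $1+\beta$ then gives $X\le C$ uniformly on $[0,1)$, and the remaining bounds follow from the previous paragraphs.
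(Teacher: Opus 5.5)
Your derivative equations and sign bookkeeping are correct. Your last step, bounding $\pr_q^2\Gamma$ through $\lambda\alpha\,\pr_q^2E$ and then $\pr_q\sigma$ through $\pr_q^2\Gamma$, is a linear Gronwall with kernel $\beta$, and it is fine \emph{provided} $\|\la p\ra\pr_p\sigma\|_\infty\les1$ is already known. But you never establish that bound, nor the weighted bound $\|\la p\ra\pr_q\sigma\|_\infty\les1+\beta$ it requires. The step you flag as the main obstacle is left open, and neither suggested route closes it.

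The monotonicity of $|P|$ along the flow of $\K$ is the same fact that already gives the weight $\la p\ra$ a favourable sign. Using $\la p\ra\le\la P(\tau)\ra$ inside the Duhamel integral for $\la p\ra\pr_q\sigma$ turns the source $p\,\pr_q^2\Gamma\,\pr_p\sigma$ into one requiring $\la P\ra^2|\pr_p\sigma|$, which \eqref{eq:data} does not control. The ``fallback'' fails for a structural reason: $p\la p\ra\,\pr_q^2\Gamma\,\pr_p\sigma$ loses a power of $p$, not of time. No integrable time factor compensates for that, so under your bounds this source is not in $L^\infty$ at all. Consequently your closing inequality for $X$ is circular, since the ``linear equation'' for $\la p\ra\pr_p\sigma$ has source $\la p\ra\pr_q\sigma$, which is exactly the unknown quantity.

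The missing idea is to estimate $Z=\pr_q\sigma-\pr_q\Gamma\,\pr_p\sigma$ instead of $\pr_q\sigma$, together with $W=\pr_p\sigma$. Note that $Z$ is just $\pr_q\ga$ evaluated at $(s,q,p+\Gamma)$. Write $\D f=\pr_sf+\{f,\K\}$. Differentiating \eqref{eq:Burg} gives $\D(\pr_q\Gamma)=\lambda\alpha\rho-(\pr_q\Gamma)^2+p\,\pr_q^2\Gamma$, and its $p\,\pr_q^2\Gamma$ term cancels the bad source exactly:
\[
\D Z=-\lambda\alpha(s)\rho\,W,\qquad \D W=-Z .
\]
Neither equation loses a power of $p$, and $\D\la p\ra\le0$ keeps the weight favourable in both. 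The cost is the nonintegrable coefficient $\alpha$. You handle it by substituting the bound $\|\la p\ra Z(s)\|_\infty\le C_0+C\int_0^s\alpha\|\la p\ra W\|_\infty$ into $\|\la p\ra W(s)\|_\infty\le C_0+\int_0^s\|\la p\ra Z\|_\infty$. After Fubini only the kernel $(s-\tau)\alpha(\tau)\le(1-\tau)\alpha(\tau)$ appears, and it is integrable on $[0,1)$.

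Gronwall then gives $\|\la p\ra\pr_p\sigma\|_\infty\les1$, hence $\|\la p\ra Z\|_\infty\les1+\beta$. The identity $\la p\ra\pr_q\sigma=\la p\ra Z+\pr_q\Gamma\,\la p\ra W$ yields the weighted bound, and your final step then goes through as written.
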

\begin{proof}
For convenience, write
\begin{equation*}
    \begin{split}
        \D f:=\pr_sf+\{f,\K\}.
    \end{split}
\end{equation*}
As in Proposition \ref{prop:mom}, we can compute
\begin{equation}\label{eq:D1}
    \begin{split}
        \D (\pr_q\sigma)&=p(\pr_q^2\Gamma)(\pr_p\sigma)-\pr_q\Gamma\,\pr_q\sigma,\\
        \D(\pr_p\sigma)&=\pr_q\Gamma\, \pr_p\sigma-\pr_q\sigma,\\
        \D(\pr_q\Gamma)&=\lambda\alpha(s)\rho(s,q)-(\pr_q\Gamma)^2+p\cdot \pr_q^2\Gamma.
    \end{split}
\end{equation}
As a consequence, we have
\begin{equation}\label{eq:D2}
    \begin{split}
        \D(\pr_q\sigma-(\pr_q\Gamma)\pr_p\sigma)&=-\lambda \alpha(s)\rho(s,q)\pr_p\sigma,\\
        \D(\pr_p\sigma)&=\pr_q\Gamma \, \pr_p\sigma-\pr_q \sigma.
    \end{split}
\end{equation}
With $Z(s,q,p)=\pr_q\sigma-\pr_q\Gamma\cdot\pr_p\sigma$ and $W(s,q,p)=\pr_p\sigma$, these equations become
\begin{equation}\label{eq:ZW}
    \begin{split}
      \D Z&=-\lambda \alpha(s)\rho W,\\
      \D W&=-Z.
    \end{split}
\end{equation}
Multiplying by the velocity weight gives
\begin{equation}
    \begin{split}
\D (\la p\ra Z)&=[\D \langle p\ra]Z-\lambda \alpha(s)\rho \la p\ra W,\\
\D (\la p\ra W )&=[\D\la p\ra]W-\la p\ra Z.
    \end{split}
\end{equation}
The weight satisfies
\begin{equation*}
    \begin{split}
      \frac{\D\la p\ra}{\la p\ra}      =-\frac{p^2}{1+p^2}\pr_q\Gamma\leq0,\qquad      \frac{|\D\la p\ra|}{\la p\ra}\les\beta(s).
    \end{split}
\end{equation*}
Thus the resulting zeroth-order terms have a favorable sign.
Lemma~\ref{lem:prop} and \eqref{eq:dE} give
\[
\begin{split}
 \|\la p\ra W(s)\|_\infty &\leq C_0+\int_0^s\|\la p\ra Z(\tau)\|_\infty\dd\tau,\\
 \|\la p\ra Z(s)\|_\infty &\leq C_0+C\int_0^s\alpha(\tau)
                         \|\la p\ra W(\tau)\|_\infty\dd\tau.
\end{split}
\]
Substitution and interchange of the two nonnegative integrals yield
\begin{equation}\label{eq:double}
\begin{split}
 \|\la p\ra W(s)\|_\infty
 &\leq C_0(1+s)+C\int_0^s(s-\tau)\alpha(\tau)\|\la p\ra W(\tau)\|_\infty\dd\tau\\
 &\leq2C_0+C\int_0^s(1-\tau)\alpha(\tau)\|\la p\ra W(\tau)\|_\infty\dd\tau.
\end{split}
\end{equation}
The last kernel is integrable. Gr\"onwall's inequality bounds $\|\la p\ra W(s)\|_\infty$ uniformly and substitute into the estimate for $Z$ then gives
\begin{equation*}
    \begin{split}
        \|\la p\ra Z\|_{L^{\infty}}&\les 1+\beta(s),\\
  \|\la p\ra \pr_p\sigma\|_{L^{\oo}}=      \|\la p\ra W\|_{L^{\infty}}&\les 1.
    \end{split}
\end{equation*}
Hence,
\begin{equation*}
    \begin{split}
        \|\la p \ra\nab_q\sigma\|_{L^{\infty}}\leq \|\la p\ra Z\|_{L^{\infty}}+\|\pr_q\Gamma\|_{L^{\infty}}\|\la p\ra\pr_p\sigma\|_{L^{\infty}}\les 1+\beta(s).
    \end{split}
\end{equation*}
Therefore,
\begin{equation}\label{eq:d^2E}
    \begin{split}
       | \pr_q^2 E(s,q)|=2\left|\int_{\R} \sigma\, \pr_q\sigma dp\right|\les \|\pr_q\sigma\|_{L^{\infty}}\|\la p\ra^2\sigma\|_{L^{\infty}}\les \|\pr_q\sigma\|_{L^{\infty}}.
    \end{split}
\end{equation}
To improve the unweighted bound for $\pr_q\sigma$ and estimate $\pr_q^2\Gamma$, differentiate \eqref{eq:dBurg} once more:
\begin{equation*}
    \begin{split}
        \pr_s (\pr^2_q\Gamma)+\Gamma\pr_q(\pr_q^2\Gamma)+3\pr_q\Gamma \, \pr_q^2\Gamma=\lambda \alpha(s)\pr_q^2 E(s,q).
    \end{split}
\end{equation*}
Using Lemma \ref{lem:prop} with $u=\pr_q^2\Gamma, b=\Gamma, c=3\pr_q\Gamma$ and $g=\lambda\alpha(s)\pr_q^2E(s,q)$, we have
\begin{equation*}
    \begin{split}
        \|\pr_q^2\Gamma(s)\|_{\oo}
        &\leq C\int_0^s\alpha(\tau)\|\pr_q\sigma(\tau)\|_{L^{\oo}}\dd\tau,\\
        \|\pr_q\sigma(s)\|_{L^{\oo}}        &\leq C_0+C\int_0^s\|\pr_q^2\Gamma(\tau)\|_{L^{\oo}}\dd\tau.
    \end{split}
\end{equation*}
For the second inequality, apply  Lemma \ref{lem:Hamilton} to the first equation in \eqref{eq:D1}, using $\pr_q\Gamma\geq0$ and the bound for $p\pr_p\sigma$. Substituting the first inequality into the second gives
\[
 \|\pr_q\sigma(s)\|_\infty \leq C_0+C\int_0^s(1-\tau)\alpha(\tau)\|\pr_q\sigma(\tau)\|_\infty\dd\tau.
\]
The same  Gr\"onwall argument as in \eqref{eq:double}
therefore proves
\begin{equation*}
    \begin{split}
\|\pr_q^2\Gamma(s)\|_{L^{\infty}}\les\beta(s),\quad \|\nab_q\sigma(s)\|_{L^{\infty}}\les1.
    \end{split}
\end{equation*}
The estimate for $\pr_q^2E$ now follows from \eqref{eq:d^2E}.
\end{proof}

\section{Asymptotics for the electric field}\label{sec:electric-field}
In this section, we derive the asymptotics of the electric fields.
Define
\begin{equation}\label{eq:Bdef}
    B(s,q)=\Gamma(s,q)-\lambda\beta(s)E(s,q).
\end{equation}
We  first prove the uniform bounds for $B$ and its first spatial derivative in the following lemma, which will be used to obtain the asymptotics of the electric field.
\begin{lemma}
    Under \eqref{eq:data}, we have for $0\le s<1$
    \begin{equation}\label{eq:B}
        \begin{split}
\|B(s)\|_{\oo}+\|\pr_qB(s)\|_{\oo}\les 1.
        \end{split}
    \end{equation}
\end{lemma}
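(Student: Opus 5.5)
The plan is to derive a transport equation for $B$ in which the singular forcing $\lambda\alpha E$ cancels exactly, and then apply Lemma~\ref{lem:prop} to $B$ and to $\pr_qB$, using the bounds of Propositions~\ref{prop:mom} and \ref{prop:der}.

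\emph{Step 1: the field equation in terms of $\sigma$.} Integrating \eqref{eq:lensP} against $2\gamma$ in $p$ gives the continuity equation $\pr_s\rho+\pr_q\int_\R p\gamma^2\dd p=0$; the force term integrates to zero. From the definition of $E$ via the sign kernel, and since $\int_\R\int_\R|p|\gamma^2\dd p\dd q<\infty$ on each compact $s$-interval, we get $\pr_sE=-\int_\R p\gamma^2\dd p$. Substituting $\gamma(s,q,p)=\sigma(s,q,p-\Gamma)$ and changing variables yields
\[
\pr_sE=-J-\Gamma\rho,\qquad J(s,q):=\int_\R p\,\sigma^2(s,q,p)\dd p .
\]

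\emph{Step 2: the equation for $B$.} Write $\Gamma=B+\lambda\beta E$ and use $\beta'=\alpha$. Then \eqref{eq:Burg} becomes
\[
\pr_sB+\lambda\alpha E+\lambda\beta\pr_sE+\Gamma\pr_qB+\lambda\beta\Gamma\pr_qE=\lambda\alpha E .
\]
The $\lambda\alpha E$ terms cancel. Inserting Step 1 and $\pr_qE=\rho$, the terms $\pm\lambda\beta\Gamma\rho$ also cancel, leaving
\[
\pr_sB+\Gamma\pr_qB=\lambda\beta(s)J,\qquad B(0,q)=0 .
\]
By Proposition~\ref{prop:mom},
\[
|J|\le\|\la p\ra^3\sigma\|_\infty^2\int_\R\la p\ra^{-5}\dd p\les1 .
\]
Moreover $\Gamma$ has at most linear growth, since $\|\Gamma\|_\infty\les\beta(s)$ with $s<1$ fixed. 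Lemma~\ref{lem:prop} with $c=0$ therefore gives
\[
\|B(s)\|_\infty\les\int_0^s\beta(\tau)\dd\tau\le1 .
\]

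\emph{Step 3: the derivative.} Differentiating the equation for $B$ in $q$ gives
\[
\pr_s(\pr_qB)+\Gamma\pr_q(\pr_qB)+\pr_q\Gamma\,\pr_qB=\lambda\beta\,\pr_qJ .
\]
Here $c=\pr_q\Gamma\ge0$ by Proposition~\ref{prop:mom}. For the source,
\[
|\pr_qJ|=2\Bigl|\int_\R p\,\sigma\,\pr_q\sigma\dd p\Bigr|\le2\|\la p\ra^3\sigma\|_\infty\|\pr_q\sigma\|_\infty\int_\R\la p\ra^{-2}\dd p\les1,
\]
using the unweighted bound $\|\pr_q\sigma\|_\infty\le C$ from Proposition~\ref{prop:der}. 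This bound is essential: the weighted bound would only give $\beta^2$, which is not integrable. Lemma~\ref{lem:prop} then gives $\|\pr_qB(s)\|_\infty\les\int_0^1\beta\les1$.

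\emph{Main obstacle.} The algebra is straightforward once the exact cancellation in Step 2 is found. The delicate points are justifying $\pr_sE=-\int_\R p\gamma^2\dd p$ and the regularity needed to differentiate $B$ in $q$. For the former, $E(s,q)$ is recovered from $\rho$ with the correct boundary behaviour as $q\to\pm\infty$, using the moment bounds and the finite-time theory of Proposition~\ref{prop:time}. For the latter, we use the continuity of $\pr_q^2\Gamma$ and $\pr_q^2E$ provided by Proposition~\ref{prop:der}. If needed, we would carry out Steps 2--3 on $[0,T]$ for each $T<1$, where all coefficients are bounded and $C^1$, and note that the resulting constants do not depend on $T$.
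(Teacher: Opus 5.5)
Your proposal is correct and follows the paper's proof essentially step for step. It uses the same field identity $\pr_sE=-\int_\R p\sigma^2\dd p-\Gamma\rho$, the same exact cancellation of $\lambda\alpha E$ via $\beta'=\alpha$ leading to $(\pr_s+\Gamma\pr_q)B=\lambda\beta\int_\R p\sigma^2\dd p$ and its $q$-derivative, and Lemma~\ref{lem:prop} with $c=\pr_q\Gamma\ge0$ together with the unweighted bound on $\pr_q\sigma$. The only cosmetic difference is that you obtain the field equation from the continuity equation for $\gamma$ and then translate in $p$, whereas the paper derives it directly from the continuity equation for $\sigma^2$.
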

\begin{proof}

The equation for $\sigma^2$ gives
\begin{equation}\label{eq:Es}
    \pr_s E(s,q)=-\int_{\R}p\sigma^2(s,q,p)\dd p-\Gamma(s,q)\pr_qE(s,q).
\end{equation}
Indeed, expanding \eqref{eq:sigma-K} with $\sigma^2$ gives
\begin{equation}
\pr_s(\sigma^2)+\pr_q((p+\Gamma)\sigma^2)-\pr_p(p\pr_q\Gamma\sigma^2)=0.
\end{equation}
Integrating over $p\in \R$ gives the continuity equation
\[
\pr_s\rho+\pr_q\left(\int_{\R}p\sigma^2\dd p+\Gamma\rho\right)=0,
\]
and differentiating the formula for $E$ yields \eqref{eq:Es}. Proposition \ref{prop:der} implies
\begin{equation}\label{eq:pints}
 \left\|\int_{\R}p\sigma^2(s,q,p)\dd p\right\|_{L^\infty_q}+\left\|2\int_{\R}p\sigma(s,q,p)\pr_q\sigma(s,q,p)\dd p\right\|_{L^\infty_q}
 \le C.
\end{equation}
For the second bound, for example, use
$|\sigma(s,q,p)|\le C_0\la p\ra^{-3}$ and $\|\pr_q\sigma(s)\|_{L^\infty}\le C$, so that $|p|\la p\ra^{-3}$ is integrable. Combining \eqref{eq:Burg} and \eqref{eq:Es}, and using $\beta'=\alpha$, gives
\begin{equation}\label{eq:BPDE}
 \begin{split}
  (\pr_s+\Gamma\pr_q)B
     &=\lambda\beta(s)\int_{\R}p\sigma^2(s,q,p)\dd p,\\
  (\pr_s+\Gamma\pr_q)(\pr_qB) +(\pr_q\Gamma)\pr_qB
&=2\lambda\beta(s)\int_{\R}p\sigma(s,q,p)\pr_q\sigma(s,q,p)\dd p.
 \end{split}
\end{equation}
Both $B$ and $\pr_qB$ vanish at $s=0$. Since $\pr_q\Gamma\ge0$, Lemma \ref{lem:prop} and \eqref{eq:pints} show that
\begin{equation}\label{eq:Bbd}
\|B(s)\|_{L^\infty}+\|\pr_qB(s)\|_{L^\infty} \le C\int_0^s\beta(u)\dd u\le C, \qquad 0\le s<1.
\end{equation}

\end{proof}

\subsection{Convergence of electric field} We have the following asymptotic expansion for the electric field.
\begin{lemma}
    Under \eqref{eq:data}, there exists $E_{\infty}\in W^{2,\oo}$ such that
\begin{equation}\label{eq:asymptotic-E}
    \begin{split}
    E(s)&=E_{\oo}+\lambda\sqrt{1-s^2}E_{\oo}\pr_q E_{\oo}+\cO(1-s),\\   \pr_q E(s)  &=\pr_q E_{\oo}+\cO(\sqrt{1-s^2}).
    \end{split}
\end{equation}
    
\end{lemma}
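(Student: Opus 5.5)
The proof rests on the field equation \eqref{eq:Es}, rewritten with the decomposition $\Gamma=B+\lambda\beta E$ from \eqref{eq:Bdef}:
\[
\pr_sE=-\int_\R p\sigma^2\dd p-B\,\pr_qE-\lambda\beta(s)E\,\pr_qE .
\]
By \eqref{eq:pints}, \eqref{eq:B} and \eqref{eq:dE}, the first two terms are bounded uniformly in $s<1$. The last term is $\cO(\beta(s))$, since $\|E\|_\infty\le M/2$. Hence $\|\pr_sE(s)\|_\infty\les 1+\beta(s)$. Because $\beta$ is integrable on $[0,1)$, $E(s)$ is Cauchy in $L^\infty$ as $s\uparrow1$, and it has a uniform limit $E_\infty$ with
\[
\|E(s)-E_\infty\|_\infty\les\int_s^1(1+\beta)\dd\tau\les\sqrt{1-s^2}.
\]

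For the derivative, I differentiate the same equation in $q$:
\[
\pr_s\pr_qE=-2\int_\R p\sigma\pr_q\sigma\dd p-\pr_qB\,\pr_qE-B\,\pr_q^2E-\lambda\beta\big((\pr_qE)^2+E\pr_q^2E\big).
\]
Using \eqref{eq:pints}, \eqref{eq:B}, \eqref{eq:dE} and the bound $\|\pr_q^2E\|_\infty\le C$ from Proposition~\ref{prop:der}, this gives $\|\pr_s\pr_qE\|_\infty\les1+\beta(s)$. So $\pr_qE(s)$ also converges uniformly at rate $\sqrt{1-s^2}$, and the limit must be $\pr_qE_\infty$. This is the second line of \eqref{eq:asymptotic-E}. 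Since $\pr_q^2E(s)$ is uniformly bounded, $\pr_qE(s)$ is uniformly Lipschitz, and the Lipschitz bound passes to the limit. Together with $E_\infty\in L^\infty$ this gives $E_\infty\in W^{2,\infty}$, with $\pr_q^2E_\infty$ understood as a bounded weak derivative.

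For the refined expansion of $E$, I integrate the field equation from $s$ to $1$ and keep the singular term separate:
\[
E(s)-E_\infty=\int_s^1\Big(\int_\R p\sigma^2\dd p+B\pr_qE\Big)\dd\tau+\lambda\int_s^1\beta(\tau)(E\pr_qE)(\tau)\dd\tau .
\]
The first integral is $\cO(1-s)$. In the second, I replace $E\pr_qE$ by $E_\infty\pr_qE_\infty$. The error is $\cO(\sqrt{1-\tau^2})$ by the two convergence rates above, and $\beta(\tau)\sqrt{1-\tau^2}=\tau\le1$, so the replacement costs only $\cO(1-s)$. The remaining integral is explicit:
\[
\int_s^1\beta\dd\tau=\sqrt{1-s^2}.
\]
This yields $E(s)=E_\infty+\lambda\sqrt{1-s^2}\,E_\infty\pr_qE_\infty+\cO(1-s)$.

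The step I expect to be the real obstacle is the bound on $\pr_s\pr_qE$. It needs $\pr_q^2E$ and $\pr_qB$ bounded uniformly up to $s=1$. Both were established earlier, and the problem is that each appears multiplied either by $\beta$ or by $B$. Without the decomposition $\Gamma=B+\lambda\beta E$, one would have $\Gamma\,\pr_q^2E$ and $\pr_q\Gamma\,\pr_qE$, each of size $\beta$. The decomposition keeps them at that size, which is integrable, and so it remains acceptable. Everything else is routine integration of bounds that are already available.
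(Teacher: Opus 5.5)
Your proof is correct and follows the paper's argument. You bound $\partial_sE$ and $\partial_s\partial_qE$ by $C(1+\beta(s))$, integrate to get uniform limits at rate $\sqrt{1-s^2}$ with $\partial_qE_\infty$ Lipschitz, then substitute $\Gamma=B+\lambda\beta E$ into \eqref{eq:Es} and replace $E\partial_qE$ by $E_\infty\partial_qE_\infty$ at cost $\int_s^1\tau\,d\tau\le 1-s$. The only difference is that the paper bounds $\partial_s\partial_qE$ in \eqref{eq:Eqs} directly using $\|\Gamma\|_\infty+\|\partial_q\Gamma\|_\infty\lesssim\beta(s)$ from Proposition~\ref{prop:mom}, so the $B$-decomposition is needed only for the refined expansion, and the step you flag as the main obstacle is not really one.
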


\begin{proof}

Differentiating \eqref{eq:Es} gives
\begin{equation}\label{eq:Eqs}
 \pr_s\pr_qE=-2\int_{\R}p\sigma\pr_q\sigma\dd p  -(\pr_q\Gamma)\pr_qE-\Gamma\pr_q^2E.
\end{equation}
\eqref{eq:pints}, Proposition \ref{prop:der} and Proposition \ref{prop:mom} therefore imply
\begin{equation}\label{eq:Etbd}
 \|\pr_sE(s)\|_{L^\infty}+\|\pr_s\pr_qE(s)\|_{L^\infty}\le C(1+\beta(s)).
\end{equation}
Since $\beta\in L^1(0,1)$, we can define
\begin{equation}\label{eq:Einf}
 E_\infty(q)=E(0,q)-\int_0^1\left[ \int_{\R}p\sigma^2(s,q,p)\dd p+\Gamma(s,q)\pr_qE(s,q)\right]\dd s.
\end{equation}
The integral converges uniformly in $q$. Integration of \eqref{eq:Eqs} likewise yields a uniform limit for $\pr_qE(s)$; this limit is the derivative of $E_\infty$. In particular, $E_\infty\in C^1(\R)$, its value and derivative are bounded,
and
\begin{equation}\label{eq:Econv}
 \|E(s)-E_\infty\|_{L^\infty} +\|\pr_qE(s)-\pr_qE_\infty\|_{L^\infty} \le C\sqrt{1-s^2}.
\end{equation}
We used
$\int_s^1\beta(u)\dd u=\sqrt{1-s^2}$ and $1-s\le\sqrt{1-s^2}$. Moreover, $\|E_\infty\|_{L^\infty}\le M/2$, and the uniform bound for $\pr_q^2E(s)$ implies that $\pr_qE_\infty$ is globally Lipschitz.

For the modified scattering argument, we need one further term in this expansion. Substitute $\Gamma=B+\lambda\beta E$ into
\eqref{eq:Es} and integrate from $s$ to $1$ to obtain
\begin{equation}\label{eq:Eint}
 \begin{split}
 E(s,q)-E_\infty(q)  ={}&\int_s^1\left[\int_{\R}p\sigma^2(\tau,q,p)\dd p+B(\tau,q)\pr_qE(\tau,q)\right]\dd \tau\\
&+\lambda\int_s^1\beta(\tau)E(\tau,q)\pr_qE(\tau,q)\dd \tau.
 \end{split}
\end{equation}
The first integral is bounded by $C(1-s)$, uniformly in $q$. By \eqref{eq:Econv}, replacing
$E(\tau,q)\pr_qE(\tau,q)$ by $E_\infty(q)\pr_qE_\infty(q)$ in the second integral introduces an error bounded by
\[
 C\int_s^1\beta(\tau)\sqrt{1-\tau^2}\dd \tau=\frac C2(1-s^2)\le C(1-s).
\]
Consequently,
\begin{equation}\label{eq:Eexp}
 \left\|E(s)-E_\infty -\lambda\sqrt{1-s^2}E_\infty\pr_qE_\infty\right\|_{L^\infty} \le C(1-s), 
\end{equation}
where we used $$
\int_s^1 \frac{\tau}{\sqrt{1-\tau^2}}\,d\tau=\sqrt{1-s^2}. $$
\end{proof}

\section{Modified scattering}\label{sec:main}

In this section we prove the modified scattering.

\begin{proof}[Proof of Theorem \ref{thm:main}]
Proposition \ref{prop:time} gives global existence and uniqueness. It remains to prove the scattering assertion. Define
\begin{equation*}
    \begin{split}
\nu(s,q,p):=\ga(s,\Phi^1(s,q,p),\Phi^2(s,q,p))
    \end{split}
\end{equation*}
where
\begin{equation*}
    \begin{split}
        \Phi^1(s,q,p)&=q+(s-1)p+\lambda(s-1)\beta(s)E_{\infty}(q),\\
        \Phi^2(s,q,p) &=p+\lambda\beta(s)E_{\infty}(q).
    \end{split}
\end{equation*}
A direct computation gives
\begin{equation*}
    \begin{split}
      \pr_s\nu&=-\lambda(1-s)\alpha(s)E_{\oo}(q)\pr_q\ga (s,\Phi) \\
        &\quad +\lambda\alpha(s)[E_{\infty}(q)-E(s,\Phi^1)]\pr_p\ga (s,\Phi).
    \end{split}
\end{equation*}
We rewrite this as
\begin{equation}\label{eq:nus}
    \begin{split}
        \pr_s\nu&=-\lambda(1-s)\alpha(s)E_{\oo}(q)[\pr_q\ga+\lambda\beta(s)\pr_q E\cdot\pr_p\ga](s,\Phi)\\
        &\quad +\lambda\alpha(s)[E_{\infty}(q)-E(s,\Phi^1)+\lambda(1-s)\beta(s)E_{\oo}(q)\pr_q E(s,\Phi^1)](\pr_p\ga)(s,\Phi).
    \end{split}
\end{equation}
The chain rule for $\sigma$ gives, at any point $(s,Q,P)$,
\begin{equation}\label{eq:nuder}
    \begin{split}
        [\pr_q\ga+\lambda\beta(s)(\pr_qE)\pr_p\ga](s,Q,P)&=[\pr_q\sigma-(\pr_qB)\pr_p\sigma](s,Q,P-\Gamma(s,Q)), \\
        \pr_p\ga(s,Q,P)&=(\pr_p\sigma)(s,Q,P-\Gamma(s,Q)).
    \end{split}
\end{equation}
The first expression is uniformly bounded, by \eqref{eq:Bbd} and the bounds for $\pr_q\sigma$ and $\pr_p\sigma$. Thus the first term on the right-hand side of \eqref{eq:nus} is bounded by
$C(1-s)\alpha(s)$.

For the second term, we claim that
\begin{equation}\label{eq:nufld}
 \begin{split}
 &\left|E_\infty(q)-E(s,\Phi^1(s,q,p))
       +\lambda(1-s)\beta(s)E_\infty(q)\pr_qE(s,\Phi^1(s,q,p))\right|\\
 &\hspace{4cm}\le C(1-s)\la p\ra.
 \end{split}
\end{equation}
First take $p=0$. Since
\[
 \Phi^1(s,q,0)-q=-\lambda(1-s)\beta(s)E_\infty(q),
\]
its absolute value is at most $C\sqrt{1-s^2}$. Taylor expansion in
\eqref{eq:Eexp}, using the Lipschitz continuity of $\pr_qE_\infty$, gives, uniformly in $q$,
\begin{equation*}
 \begin{split}
 E(s,\Phi^1(s,q,0))
   &=E_\infty(q)+\lambda\big[\sqrt{1-s^2}-(1-s)\beta(s)\big] E_\infty(q)\pr_qE_\infty(q)+\cO(1-s),\\ \pr_qE(s,\Phi^1(s,q,0))
   &=\pr_qE_\infty(q)+\cO(\sqrt{1-s^2}).
 \end{split}
\end{equation*}
It follows that the expression inside the absolute value in
\eqref{eq:nufld}, at $p=0$, equals
\[
 \lambda\big[2(1-s)\beta(s)-\sqrt{1-s^2}\big] E_\infty(q)\pr_qE_\infty(q)+\cO(1-s).
\]
Since
\begin{equation}\label{eq:cancel}
 2(1-s)\beta(s)-\sqrt{1-s^2}
       =-\frac{(1-s)^2}{\sqrt{1-s^2}},
\end{equation}
this proves \eqref{eq:nufld} when $p=0$. For arbitrary $p$, $\Phi^1(s,q,p)-\Phi^1(s,q,0)=-(1-s)p$. The uniform bounds for $\pr_qE$ and $\pr_q^2E$, together with the boundedness of $(1-s)\beta(s)$, show that replacing $0$ by $p$ changes the expression by at most $C(1-s)|p|$. This proves
\eqref{eq:nufld} for every $p$.

To use the velocity weight in the estimate for $\pr_p\sigma$, we compare the two velocity arguments. Equations \eqref{eq:Econv} and
\eqref{eq:Bbd} imply
\begin{equation*}
 \begin{split}
 |\Phi^2(s,q,p)-\Gamma(s,\Phi^1(s,q,p))-p|
 &\le\lambda\beta(s)|E_\infty(q)-E(s,\Phi^1(s,q,p))|     +\|B(s)\|_{L^\infty}\\
 &\le C+C(1-s)\beta(s)|p|.
 \end{split}
\end{equation*}
Indeed, the field difference is bounded by
$C\sqrt{1-s^2}+C(1-s)|p|$, and $\beta(s)\sqrt{1-s^2}=s$.
Since $(1-s)\beta(s)\to0$, for $s$ sufficiently close to $1$ we obtain
\begin{equation}\label{eq:nuv}
 C^{-1}\la p\ra \le\la\Phi^2(s,q,p)-\Gamma(s,\Phi^1(s,q,p))\ra \le C\la p\ra.
\end{equation}
For $s$ away from $1$, \eqref{eq:nuv} holds directly, since $|\Phi^2-\Gamma(s,\Phi^1)-p|\le|\lambda|\beta(s)\|E_\infty\|_{L^\infty} +\|\Gamma(s)\|_{L^\infty}\les\beta(s)$.
Hence \eqref{eq:nuv} holds for all $0\le s<1$.

The second identity in \eqref{eq:nuder} and the bound for $\la p\ra\pr_p\sigma$ now give
\[
 |(\pr_p\gamma)(s,\Phi(s,q,p))|\le C\la p\ra^{-1}.
\]
Combining this estimate with \eqref{eq:nufld}, every term in \eqref{eq:nus} is bounded by an integrable function:
\begin{equation}\label{eq:nuL1}
 \|\pr_s\nu(s)\|_{L^\infty}\le C(1-s)\alpha(s), \qquad \int_0^1(1-s)\alpha(s)\dd s=1.
\end{equation}
Thus the uniform limit
\begin{equation}\label{eq:muinf}
\mu_\infty(q,p):=\lim_{s\uparrow1}\nu(s,q,p)
\end{equation}
exists, and
\begin{equation}\label{eq:nuconv}
 \|\nu(s)-\mu_\infty\|_{L^\infty} \le C\int_s^1(1-u)\alpha(u)\dd u =C\sqrt{\frac{1-s}{1+s}}.
\end{equation}
In particular $\mu_\infty\in L^\infty$. For each $s<1$, the map $\Phi(s,\cdot)$ is a global $C^1$ diffeomorphism with determinant one: it is the composition of the velocity translation $(q,p)\mapsto(q,p+\lambda\beta(s)E_\infty(q))$ and the spatial shear $(Q,P)\mapsto(Q+(s-1)P,P)$.
Consequently,
\[
 \iint_{\R^2}|\nu(s,q,p)|^2\dd q\dd p   =\iint_{\R^2}|\gamma(s,q,p)|^2\dd q\dd p=M.
\]
Fatou's lemma gives $\|\mu_\infty\|_{L^2}^2\le M$, so that $\mu_\infty\in L^2\cap L^\infty$.

Next, we  express \eqref{eq:nuconv} in the original variables. By \eqref{eq:lens} and $\beta(\tanh t)=\sinh t$, direct substitution yields
\begin{equation*}
 \begin{split}
 \nu(\tanh t,q,p)  =\mu\Big(&t,\ q\cosh t-e^{-t}p         -\frac{\lambda}{2}(1-e^{-2t})E_\infty(q),\\
&q\sinh t+e^{-t}p+\frac{\lambda}{2}(1-e^{-2t})E_\infty(q)\Big)\\
=\mu\big(&t,\mathcal{X}(t,q-p,p),\mathcal{V}(t,q-p,p)\big).
 \end{split}
\end{equation*}
Since $\sqrt{(1-\tanh t)/(1+\tanh t)}=e^{-t}$, renaming $(q,p)$ as $(x,v)$ in \eqref{eq:nuconv} proves
\[
 \left\|\mu\big(t,\mathcal{X}(t,x-v,v),\mathcal{V}(t,x-v,v)\big) -\mu_\infty(x,v)\right\|_{L^\infty_{x,v}}\le Ce^{-t}.
\]

We finally identify the relation between $E_\infty$ and $\mu_\infty$. The propagation of the velocity arguments, together with the cubic
velocity bound for $\sigma$, gives
\[
 \sup_{0\le s<1}\|\la p\ra^3\nu(s)\|_{L^\infty} +\|\la p\ra^3\mu_\infty\|_{L^\infty}\le C.
\]
Consequently, the uniform convergence of $\nu(s)$ to $\mu_\infty$ and dominated convergence imply that, for every
$\varphi\in C_c^\infty(\R)$,
\[
\lim_{s\uparrow1}\iint_{\R^2}\varphi(q)\nu^2(s,q,p)\dd q\dd p=\iint_{\R^2}\varphi(q)\mu_\infty^2(q,p)\dd q\dd p.
\]
Since $\det D\Phi=1$ and $\Phi^1(s,q,p)+(1-s)\Phi^2(s,q,p)=q$, changes of variables yield
\[
 \begin{split}
\iint_{\R^2}\varphi(q)\nu^2(s,q,p)\dd q\dd p &=\iint_{\R^2}\varphi(q+(1-s)p)\gamma^2(s,q,p)\dd q\dd p\\
 &=\iint_{\R^2} \varphi\big(q+(1-s)(p+\Gamma(s,q))\big) \sigma^2(s,q,p)\dd q\dd p.
 \end{split}
\]
Using $\pr_qE=\int_\R\sigma^2\dd p$, we therefore obtain
\[
 \begin{split}
&\left|\iint_{\R^2}\varphi(q)\nu^2(s,q,p)\dd q\dd p-\int_\R\varphi(q)\pr_qE(s,q)\dd q\right|\\
&\qquad\le(1-s)\|\varphi'\|_{L^\infty}
 \left(
\|p\sigma(s)\|_{L^2}\|\sigma(s)\|_{L^2}+M\|\Gamma(s)\|_{L^\infty}\right)\\
 &\qquad\le C\|\varphi'\|_{L^\infty} (1-s)(1+\beta(s)) \longrightarrow0.
 \end{split}
\]
Passing to the limit and using the uniform convergence of
$\pr_qE(s)$ to $\pr_qE_\infty$, we conclude that
\[
 \int_\R\varphi(q)\pr_qE_\infty(q)\dd q=\iint_{\R^2}\varphi(q)\mu_\infty^2(q,p)\dd q\dd p.
\]
The continuity of $\mu_\infty$ and its cubic velocity bound imply that $\int_\R\mu_\infty^2(q,p)\dd p$ is continuous in $q$. Hence
\[
\pr_qE_\infty(q)=\int_\R\mu_\infty^2(q,p)\dd p \qquad\text{for every }q\in\R.
\]

Moreover, $E(s,\pm\infty)=\pm M/2$ for every $s<1$. The uniform convergence $E(s)\to E_\infty$ preserves these limits, so $E_\infty(\pm\infty)=\pm M/2$. Integrating the preceding identity
gives
\[
 \iint_{\R^2}\mu_\infty^2(q,p)\dd q\dd p=E_\infty(+\infty)-E_\infty(-\infty)=M.
\]
It follows that 
\[
{
 E_\infty(q) =\frac12\iint_{\R^2} \sgn(q-y)\mu_\infty^2(y,p)\dd y\dd p =E[\mu_\infty](q).
 }
\]
This completes the proof of Theorem \ref{thm:main}.

\end{proof}

\section{Modified wave operators}\label{sec:wave}
In this section we prove Theorem \ref{thm:main2}, which requires solving \eqref{eq:lensP} backward from $s=1$. Neither unknown used so far is suitable for this. The equation for $\gamma$ contains the nonintegrable weight $\alpha(s)$, while $\sigma$ is defined through $\Gamma$, which is obtained by integrating \eqref{eq:Burg} forward from $s=0$ and is not available at $s=1$. We instead shift the momentum by the explicit leading part of $\Gamma$, which only involves the field at the same time:
\begin{equation}\label{eq:wave:shear}
\widetilde\gamma(s,q,p)=\gamma\big(s,q,p+\lambda\beta(s)E(s,q)\big)=\sigma\big(s,q,p-B(s,q)\big),\qquad B=\Gamma-\lambda\beta E.
\end{equation}
The equation for $\tga$ has coefficients of size $\beta(s)$, which is integrable. The price is a loss of one power of $p$ in the derivative estimates, caused by the term $\lambda\beta\,p\,\pr_q^2E\,\pr_p\tga$ in the equation for $\pr_q\tga$. We recover it with a vector field $Z=A\pr_q+C\pr_p$ commuting with the equation. By the continuity equation, the second derivative of $A$ along the flow equals $\lambda\alpha(s)\pr_qE\,A$, see \eqref{eq:AC-cancellation}, so after two time integrations only the kernel $(1-s)\alpha(s)$ appears, as in Section~\ref{sec:correction}.

For convenience, write the density and corrected current as
\[\rho(s,q)=\int_\R\widetilde\gamma(s,q,p)^2\dd p,  \qquad j(s,q)=\int_\R p\widetilde\gamma(s,q,p)^2\dd p.
\]
A direct computation gives that $\widetilde{\ga}$ solves
\begin{equation}\label{eq:tK}
    \begin{split}
     \pr_s\widetilde{\ga}+\{\tga,\tK \}=0,\quad \tK(s,q,p)=\frac{p^2}{2}+\lambda\beta(s)\of{pE(s,q)-\int_0^q j(s,y)dy}   
    \end{split}
\end{equation}
To propagate the moments and derivatives, we need to compute the derivatives of $\tK$:
\begin{equation}
    \begin{split}
  & \pr_q\tK=     \lambda \beta(s)(p\pr_q E-j(s,q)),\quad \pr_p\tK=p+\lambda\beta(s)E(s,q),\\
& \pr_q^2\tK=\lambda\beta(s)(p\pr_q^2E-\pr_qj),\quad \pr_p\pr_q\tK=\lambda\beta(s)\pr_q E,\quad \pr_p^2 \tK=1,\\
&\pr_q^3\tK=\lambda\beta(s)(p\pr_q^3E-\pr_q^2j),\quad \pr_q^2\pr_p\tK=\lambda\beta(s)\pr_q^2E,\quad \pr_q\pr_{p}^2\tK=\pr_p^3\tK=0.
    \end{split}
\end{equation}

\begin{theorem}\label{thm:3}
Let the prescribed final datum $\ga_\oo$  satisfy
\begin{equation}\label{eq:tga-initial}
\sum_{j+k\le2}\sum_{r\in\{2,\oo\}}\|\la p\ra^3\pr_q^j\pr_p^k\ga_\oo\|_r\le c_0<\oo.
\end{equation}
There exists $s_*\in[1/2,1)$, depending on $c_0$ and $\lambda$, such that \eqref{eq:tK} has a solution
\[
\tga\in C([s_*,1];L^2\cap L^\oo)\cap C^1([s_*,1)\times\R^2),\qquad \tga(1,q,p)=\ga_\oo(q,p),
\]
with
\begin{equation}\label{eq:tga-solution-bound}
\sup_{s_*\le s\le1}\sum_{l=0}^2\sum_{j+k=l}\sum_{r\in\{2,\oo\}}\|\la p\ra^{3-l}\pr_q^j\pr_p^k\tga(s)\|_r\le4c_0.
\end{equation}
 The first derivatives of $\tga$ in $(q,p)$ extend continuously to $s=1$. The solution is unique. Furthermore,
\begin{equation}\label{eq:tga-endpoint-rate}
\|\tga(s)-\ga_\oo\|_2+\|\tga(s)-\ga_\oo\|_\oo\les\sqrt{1-s^2},\qquad s\rightarrow1.
\end{equation}
\end{theorem}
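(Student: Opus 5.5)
We construct the solution of \eqref{eq:tK} backward from $s=1$ by an iteration (or a priori estimate plus compactness) on a short interval $[s_*,1]$, where all coefficients are integrable in $s$. Fix a candidate field: given $\tga^{(n)}$ on $[s_*,1]$ satisfying \eqref{eq:tga-solution-bound}, compute $\rho^{(n)},j^{(n)},E^{(n)}=\frac12\int\sgn(q-y)\rho^{(n)}\dd y$, and solve the \emph{linear} transport equation $\pr_s\tga^{(n+1)}+\{\tga^{(n+1)},\tK^{(n)}\}=0$ with $\tga^{(n+1)}(1)=\ga_\oo$ by characteristics. The Hamiltonian vector field $(\pr_p\tK,-\pr_q\tK)=(p+\lambda\beta E,\ -\lambda\beta(p\pr_qE-j))$ is $C^1$ with linear growth and coefficients of size $\beta(s)\in L^1(s_*,1)$. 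Hence the backward flow from $s=1$ is a well-defined measure-preserving $C^1$ diffeomorphism that extends continuously to $s=1$. Lemma \ref{lem:Hamilton}, run backward in time, applies.

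\textbf{Step 1 (a priori bounds).} We propagate the norms in \eqref{eq:tga-solution-bound}. For moments, $\{\la p\ra^3,\tK\}=-3\frac{p}{\la p\ra^2}\la p\ra^3\pr_q\tK$, and $|p\,\pr_q\tK|\les\beta\la p\ra^2(\|\rho\|_\oo+\|j\|_\oo)$. The coefficient is therefore $\les\beta(s)$ times norms controlled by the bootstrap, since $\|\rho\|_\oo,\|j\|_\oo\les\|\la p\ra^3\tga\|_\oo^2$. Gr\"onwall with $\int_{s_*}^1\beta\les\sqrt{1-s_*}$ then gives growth by a factor $e^{C(c_0)\sqrt{1-s_*}}$. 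For first derivatives, differentiating the equation yields
\[
\D(\pr_q\tga)=-\lambda\beta\pr_qE\,\pr_q\tga+\lambda\beta(p\pr_q^2E-\pr_qj)\pr_p\tga,\qquad \D(\pr_p\tga)=-\pr_q\tga+\lambda\beta\pr_qE\,\pr_p\tga .
\]
Here $\D$ is the transport operator of $\tK$. The only weight loss is $p\,\pr_q^2E\,\pr_p\tga$, which is exactly why the bound is stated with weight $\la p\ra^{3-l}$ at derivative order $l$. One power of $p$ is lost per derivative, and $\pr_p\tga$ carries weight $\la p\ra^2$ at first order, so $\la p\ra^2\cdot p\,\pr_p\tga$ is controlled by $\la p\ra^3$ times a first derivative. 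We control this via the vector field $Z=A\pr_q+C\pr_p$ described before \eqref{eq:tK}, or simply via the graded weights. Second derivatives work the same way: $\pr_q^3\tK$ involves $\pr_q^3E=\pr_q^2\rho$, which is bounded by second derivatives of $\tga$ against the $\la p\ra^{1}$ weight. This integrates in $p$ only when paired with the $\la p\ra^3$ bound of $\tga$ via $\pr_q^2\rho=2\int(\tga\pr_q^2\tga+(\pr_q\tga)^2)$, where $(\pr_q\tga)^2\les\la p\ra^{-4}$. All coefficients carry a factor $\beta(s)$ or are the bounded term $-\pr_q\tga$ in $\D\pr_p\tga$, which is harmless on an interval of length $1-s_*$. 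Choosing $s_*$ close to $1$ therefore closes the bootstrap with the constant $4c_0$. The $L^2$ versions follow identically, since the flow preserves measure.

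\textbf{Step 2 (contraction and limit).} We estimate differences $\tga^{(n+1)}-\tga^{(n)}$ in the weighted norm $\|\la p\ra^2\cdot\|_{L^2\cap L^\oo}$, one derivative lower. The forcing is $\{\tga^{(n)},\tK^{(n)}-\tK^{(n-1)}\}$, bounded by $\beta(s)$ times the difference of fields, so the map is a contraction for $s_*$ close to $1$. The limit solves \eqref{eq:tK}. By weak-$*$ lower semicontinuity it keeps \eqref{eq:tga-solution-bound}, and it is $C^1$ on $[s_*,1)$ by interpolation, or by redoing the characteristic representation with the limiting $C^1$ field. Continuity of first derivatives up to $s=1$ follows because the backward flow and its Jacobian converge as $s\to1$, using $\int^1\beta<\oo$. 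Uniqueness follows from the same difference estimate.

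\textbf{Step 3 (endpoint rate).} The rate follows from $\tga(s)-\ga_\oo=-\int_s^1\{\tga,\tK\}\dd\tau$ together with
\[
|\{\tga,\tK\}|\le|p+\lambda\beta E||\pr_q\tga|+\beta|p\pr_qE-j||\pr_p\tga| .
\]
The $\beta$-terms integrate to $\sqrt{1-s^2}$. The free term $p\,\pr_q\tga$ is bounded using the weight $\la p\ra^2\pr_q\tga$ and gives $1-s\le\sqrt{1-s^2}$.

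\textbf{Main obstacle.} The main obstacle is the derivative loss from $\lambda\beta\,p\,\pr_q^2E\,\pr_p\tga$ at the top (second) order. There, $\pr_q^3\tK$ contains $p\,\pr_q^3E$, and we must check that the weight hierarchy $\la p\ra^{3-l}$ closes with $\pr_q^3E$ controlled by second derivatives. If it does not, the first thing to try is the commuting vector field $Z$ mentioned above to absorb the $p$-loss.
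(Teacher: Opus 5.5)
There is a genuine gap in Step~1, in the derivative estimates. The graded weights $\la p\ra^{3-l}$ cannot absorb $\lambda\beta\,p\,\pr_q^2E\,\pr_p\tga$, because this term loses a power of $p$ \emph{within} a fixed derivative order, not from order $l$ to order $l-1$. To propagate $\|\la p\ra^2\pr_q\tga\|_r$ you need $\la p\ra^3\pr_p\tga$ in the source; you say so yourself (``$\la p\ra^3$ times a first derivative''), but the bootstrap only controls $\la p\ra^2\pr_p\tga$. Propagating $\la p\ra^3\pr_p\tga$ through $\D(\pr_p\tga)=-\pr_q\tga+\dots$ then needs $\la p\ra^3\pr_q\tga$, hence $\la p\ra^4\pr_p\tga$, and so on. Each $\pr_q\to\pr_p\to\pr_q$ loop costs one power of $p$, so no finite family of weights closes. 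The same loop appears at second order through $2\lambda\beta\,p\,\pr_q^2E\,\pr_q\pr_p\tga$. Taking $s_*$ closer to $1$ does not help, since what is lost is a weight, not a small constant. A direct Gr\"onwall bound on the Jacobian of the backward flow fails for the same reason. From $\D C\approx-\lambda\beta\,p\,\pr_q^2E\,A$ and $\D A\approx C$ one only gets $|A-a|\les|p|(1-s)^{3/2}$, with higher powers of $p$ on iteration, which is not uniform in $p$. So the alternative ``or simply via the graded weights'' is false, and the $Z$ route is only named, without the ingredient that makes it work.

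That ingredient is the continuity equation, which your Step~1 never uses. From $\pr_s\rho=-\pr_q(j+\lambda\beta E\rho)$ one gets $\D\rho=p\,\pr_q\rho-\pr_qj-\lambda\beta\rho^2$. Hence along characteristics $\lambda\beta\,p\,\pr_q^2E=\lambda\beta\,\D\rho+\lambda\beta\,\pr_qj+\lambda^2\beta^2\rho^2$, and the total-derivative part can be integrated by parts in $s$ using $\beta'=\alpha$. For $Z=A\pr_q+C\pr_p$ with $[\D,Z]=0$, i.e.\ \eqref{eq:AC}, this becomes the exact identity \eqref{eq:AC-cancellation}, $\D^2A=\lambda\alpha\,(\pr_qE)\,A$. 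All $p$-dependence drops out, and after two time integrations only the kernel $(s-u)\alpha(u)$ remains, with $\int_{s_1}^1(1-u)\alpha(u)\,du=\sqrt{(1-s_1)/(1+s_1)}$. This makes $A$ bounded uniformly in $p$, with $A-a$ and $(C-c)/\la p\ra$ small.

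With $A(s_1)=a$ and $C(s_1)=c$ constant at the observation time, $\D(Z\tga)=0$ lets you transport $\la p\ra^2Z\tga$ up to the harmless weight commutator. At $s=1$ the growth $C\sim\la p\ra$ is paid for by the weight $\la p\ra^3$ on \emph{all} derivatives of $\ga_\oo$ in \eqref{eq:tga-initial}. This is why the datum carries a stronger norm than the solution.

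At second order you also need:
\begin{itemize}
\item the identity $\D^2(ZA)=\lambda\alpha[(\pr_qE)ZA+(\pr_q^2E)A^2]$;
\item a bound on $ZC/\la p\ra$;
\item the polarization $\tfrac12(Z_{1,1}^2-Z_{1,0}^2-Z_{0,1}^2)$ to recover $\pr_q\pr_p\tga$.
\end{itemize}
In the Picard scheme the cancellation is only approximate, because the continuity equation couples consecutive iterates; see \eqref{eq:tga-frozen-cancellation}--\eqref{eq:tga-seed-cancellation}. The leftover $\beta$ and $\beta^2$ coefficients must be shown to be small after two integrations.

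A smaller point: run the contraction in Step~2 in unweighted $L^2\cap L^\oo$, as the paper does. In $\la p\ra^2$-weighted norms the source $\lambda\beta\,p\,(\rho_{(n)}-\rho_{(n-1)})\pr_p\tga_{(n)}$ again requires $\la p\ra^3\pr_p\tga_{(n)}$, which you do not control. Your moment bounds and Step~3 are fine and match the paper.
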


The solution will be constructed by Picard iteration. We first establish a priori estimates for the nonlinear equation and then verify that these estimates also hold uniformly for the linear iteration. In this section, $K$ denotes a  constant, whereas implicit constants may depend on $c_0$ and $\lambda$. We choose $s_*$ sufficiently close to $1$ that
\begin{equation}\label{eq:tga-shortness}
K(1+|\lambda|c_0^2)\sqrt{1-s_*^2}\le\frac1{100},
\end{equation}
where the  constant $K$ is fixed sufficiently large to accommodate the estimates below.

For an arbitrary $s_1\in[s_*,1)$, we make the bootstrap assumption
\begin{equation}\label{eq:bootstrap}
\sup_{s_1\le s\le1}\sum_{l=0}^2\sum_{j+k=l}\sum_{r\in\{2,\oo\}}\|\la p\ra^{3-l}\pr_q^j\pr_p^k\tga(s)\|_r\le4c_0.
\end{equation}
The definitions of $E$ and $j$ give
\begin{equation}\label{eq:bound-Ej}
\begin{aligned}
\|E\|_\oo
&\le\frac12\|\tga\|_2^2,\\
\|j\|_\oo+\|\pr_qE\|_\oo
&\le K\|\la p\ra^3\tga\|_\oo^2,\\
\|\pr_qj\|_\oo+\|\pr_q^2E\|_\oo
&\le K\|\la p\ra^3\tga\|_\oo
          \|\la p\ra^2\pr_q\tga\|_\oo,\\
\|\pr_q^2j\|_\oo+\|\pr_q^3E\|_\oo
&\le K\left(
\|\la p\ra^2\pr_q\tga\|_\oo^2+\|\la p\ra^3\tga\|_\oo \|\la p\ra\pr_q^2\tga\|_\oo\right).
\end{aligned}
\end{equation}
Indeed,
\[
\begin{aligned}
\pr_qj&=2\int_\R p\tga\pr_q\tga\dd p,
&\pr_q^2E&=2\int_\R\tga\pr_q\tga\dd p,\\
\pr_q^2j&=2\int_\R p\bigl((\pr_q\tga)^2+\tga\pr_q^2\tga\bigr)\dd p,
&\pr_q^3E&=2\int_\R\bigl((\pr_q\tga)^2+\tga\pr_q^2\tga\bigr)\dd p.
\end{aligned}
\]
The products in the second line are bounded by $Kc_0^2\la p\ra^{-4}$, and $\int_\R(1+|p|)\la p\ra^{-4}\dd p<\oo$. Consequently,
\begin{equation}\label{eq:tga-field-bootstrap}
\sup_{s_1\le s\le1}\left[\|E(s)\|_\oo+
\sum_{k=0}^2\left(\|\pr_q^kj(s)\|_\oo+\|\pr_q^{k+1}E(s)\|_\oo\right)\right]\le Kc_0^2.
\end{equation}

\subsection{A priori estimates}

\begin{lemma}\label{lem:tga-bootstrap}
Suppose that a smooth solution of \eqref{eq:tK}, with final datum satisfying \eqref{eq:tga-initial}, obeys \eqref{eq:bootstrap} on $[s_1,1]$. If \eqref{eq:tga-shortness} holds, then
\begin{equation}\label{eq:tga-bootstrap-improved}
\sup_{s_1\le s\le1}\sum_{l=0}^2\sum_{j+k=l}\sum_{r\in\{2,\oo\}}\|\la p\ra^{3-l}\pr_q^j\pr_p^k\tga(s)\|_r\le2c_0.
\end{equation}
The choice of $s_*$ is independent of $s_1$.
\end{lemma}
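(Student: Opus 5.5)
We integrate the equation \eqref{eq:tK} backward from $s=1$ along the Hamiltonian flow of $\tK$. Every coefficient beyond free transport carries the integrable weight $\beta(s)$. The field bounds \eqref{eq:tga-field-bootstrap} then show that each weighted norm in \eqref{eq:tga-bootstrap-improved} changes by at most $K(1+|\lambda|c_0^2)c_0\int_s^1\beta=K(1+|\lambda|c_0^2)c_0\sqrt{1-s^2}$. By \eqref{eq:tga-shortness} this is $\le c_0/100$, which closes from $c_0$ to $2c_0$. Each estimate uses the version of Lemma~\ref{lem:Hamilton} run backward in time from $s=1$, which is legitimate because the Hamiltonian vector field of $\tK$ is $C^1$ with linear growth under \eqref{eq:bootstrap}.

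\textbf{Zeroth order.} We have $\{\la p\ra^3,\tK\}=-3p\la p\ra\,\pr_q\tK=-3\lambda\beta\,p\la p\ra(p\pr_qE-j)$. Hence
\[
\mathcal D(\la p\ra^3\tga)=F\,\la p\ra^3\tga+G,\qquad F=-3\lambda\beta\frac{p^2}{1+p^2}\pr_qE,\qquad G=3\lambda\beta\frac{p\,j}{1+p^2}\la p\ra^3\tga .
\]
Both $|F|$ and $|G|/|\la p\ra^3\tga|$ are bounded by $K\beta\,\|\pr_qE\|_\oo+K\beta\|j\|_\oo\le K|\lambda|c_0^2\beta$. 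Integrating backward with an integrating factor $e^{\int_s^1|F|}\le 1+\tfrac1{50}$ gives the claim. The $L^2$ norm is treated identically.

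\textbf{First and second derivatives.} Differentiating \eqref{eq:tK} gives
\[
\mathcal D(\pr_q\tga)=\pr_q^2\tK\,\pr_p\tga-\pr_q\pr_p\tK\,\pr_q\tga,\qquad \mathcal D(\pr_p\tga)=-\pr_q\tga+\pr_q\pr_p\tK\,\pr_p\tga .
\]
This is the step I expect to be the main obstacle. The free-transport term $-\pr_q\tga$ is not small, and $\pr_q^2\tK$ contains $\lambda\beta\,p\,\pr_q^2E$, which costs one power of $p$.

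The key point is the weighting in \eqref{eq:bootstrap}. First derivatives carry the weight $\la p\ra^{2}$, one less than the weight $\la p\ra^{3}$ on $\tga$, so the term $p\,\pr_q^2E\,\pr_p\tga$ is controlled by $\la p\ra^{3}|\pr_p\tga|$. I would therefore couple the two first-derivative equations. For $\pr_p\tga$, the problematic term $-\pr_q\tga$ is of order one over an interval of length $1-s$. Since the norm $\|\la p\ra^2\pr_q\tga\|$ is itself bounded by $4c_0$ under the bootstrap, this contributes at most $4c_0(1-s)\le c_0/100$ after shrinking $s_*$; here $1-s\le\sqrt{1-s^2}$ fits within \eqref{eq:tga-shortness}.

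The loss of a $p$-power works the same way. The $\la p\ra^2$-weighted bound for $\pr_q\tga$ involves $\beta\,|p|\la p\ra^2|\pr_p\tga|\le\beta\,\la p\ra^3|\pr_p\tga|$. If this exceeds the bootstrap weight $\la p\ra^{2}\pr_p\tga$, I would fall back to the vector field $Z=A\pr_q+C\pr_p$ described before \eqref{eq:tK}, with $A,C$ solving the linearized flow backward from $(A,C)=(1,0)$ or $(0,1)$ at $s=1$. Its commutation gives $\mathcal D(Z\tga)=0$, and the growth of $A$ is controlled by the identity with kernel $(1-s)\alpha$, whose integral $\int_s^1(1-u)\alpha(u)\,du=\sqrt{(1-s)/(1+s)}$ is small.

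At second order, the equations produce terms with $\pr_q^3\tK=\lambda\beta(p\pr_q^3E-\pr_q^2j)$ multiplying $\pr_p\tga$, and terms $\pr_q^2\pr_p\tK=\lambda\beta\pr_q^2E$. These are controlled by \eqref{eq:tga-field-bootstrap} with $k=2$, using the one-power drop in weight from $\la p\ra^2$ to $\la p\ra^1$. The remaining free-transport terms are again bounded by $(1-s)\cdot 4c_0$.

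\textbf{Conclusion.} Summing all contributions yields the total $c_0+C c_0\big(K(1+|\lambda|c_0^2)\sqrt{1-s^2}+(1-s)\big)\le2c_0$, for a constant $K$ fixed independently of $s_1$. The choice of $s_*$ depends only on $c_0$ and $\lambda$, so it is independent of $s_1$, as the statement requires.
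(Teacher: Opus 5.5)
Your zeroth-order step is fine, and so is the direct bound for $\la p\ra^2\pr_p\tga$, where the coupling $-\pr_q\tga$ costs only $4c_0(1-s)$. The derivative estimates, however, do not close as proposed. In $\D(\la p\ra^2\pr_q\tga)$ the source $\lambda\beta\,p\,(\pr_q^2E)\la p\ra^2\pr_p\tga$ requires $\la p\ra^3\pr_p\tga$, and no norm in \eqref{eq:bootstrap} controls that. The drop from $\la p\ra^3$ on $\tga$ to $\la p\ra^2$ on $\nabla\tga$ does not help, because the lost power falls on a first derivative. The same loss recurs at second order, through $2\pr_q^2\tK\,\pr_q\pr_p\tga$ in $\D(\pr_q^2\tga)$ and $\pr_q^2\tK\,\pr_p^2\tga$ in $\D(\pr_q\pr_p\tga)$. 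Your second-order paragraph omits these terms; the weight drop $\la p\ra^2\to\la p\ra$ only handles the $\pr_q^3\tK\,\pr_p\tga$ terms. So the vector field $Z$ is not a fallback but the core of the proof, and your sketch of it is missing the two ingredients that make it work.

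First, the initialization. The paper imposes constant values $(A,C)=(a,c)$ at the \emph{observation} time $s_1$ and evolves toward $s=1$. Then $Z\tga(s_1)=(a\pr_q+c\pr_p)\tga(s_1)$ exactly, and $\D A(s_1)=c+\lambda\beta(s_1)(\pr_qE)(s_1)a$ is finite. This is what lets you integrate $\D^2A=\lambda\alpha(\pr_qE)A$ twice with the small kernels in \eqref{eq:AC-kernels}, giving $\|A-a\|_\oo$ small uniformly in $p$. If you start at $s=1$ as proposed, three things go wrong:
\begin{enumerate}
\item the corresponding initial value of $\D A$ contains $\beta(1)=\infty$;
\item $A$ in general deviates from $a$ by order $(1-s)^{3/2}|p|$, so the kernel argument no longer bounds it;
\item you would still need to invert the matrix formed by the two choices of $(A,C)$ at $s_1$ to recover $\nabla\tga(s_1)$, which you do not address.
\end{enumerate}

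Second, the weights. In either direction $C-c$ is small only relative to $\la p\ra$ (see \eqref{eq:AC-C-close}), so one power of $p$ is unavoidably lost. The paper pays for it at $s=1$ with the $\la p\ra^3$ weights that \eqref{eq:tga-initial} places on \emph{all} derivatives $\pr_q^j\pr_p^k\ga_\oo$, $j+k\le2$. These are stronger than the propagated weights $\la p\ra^{3-l}$; see \eqref{eq:tga-first-terminal-error} and \eqref{eq:tga-second-terminal-error}. Your argument never uses these extra final weights, and an estimate confined to the norms in \eqref{eq:bootstrap} cannot close.

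At second order you additionally need:
\begin{itemize}
\item the commutation identity $\D(Z^2\tga)=0$;
\item the smallness of $ZA$ and $ZC/\la p\ra$, obtained from \eqref{eq:tga-ZA-equation} and \eqref{eq:tga-ZC-equation};
\item the choice $(a,c)=(1,1)$, to recover $\pr_q\pr_p\tga$ by polarization.
\end{itemize}
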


\begin{proof}
Write
\[
\D=\pr_s+\{\cdot,\tK\}=\pr_s+(p+\lambda\beta E)\pr_q+\lambda\beta(j-p\pr_qE)\pr_p.
\]
Throughout the proof, $s_*\le s_1\le s<1$. The field of $\D$ is divergence free. We keep track of the length of $[s_1,1]$; this is what improves $4c_0$ to $2c_0$. Transport estimates involving the final time $s=1$ are first applied on compact time intervals with upper endpoint $s<1$. We then let $s\rightarrow 1$, using the prescribed bounds and integrability of $\beta(s)$.

\smallskip
\noindent\textbf{Step 1. No derivatives and three velocity moments: $l=0$.}
First,
\begin{equation}\label{eq:zero}
\|E\|_\oo\le\frac12\|\tga\|_2^2,\qquad
\|j\|_\oo+\|\pr_qE\|_\oo\le K\|\la p\ra^2\tga\|_\oo^2\le Kc_0^2.
\end{equation}
If $\D v=0$, then, for $0\le m\le3$,
\[
\D(\la p\ra^mv)=m\lambda\beta\frac{p(j-p\pr_qE)}{1+p^2}\la p\ra^mv.
\]
Lemma \ref{lem:Hamilton} and Gronwall's inequality therefore give, in the backward time direction,
\begin{equation}\label{eq:tga-weighted-transport}
\begin{aligned}
\|\la p\ra^mv(s_1)\|_r&\le\exp\left(K|\lambda|c_0^2\int_{s_1}^s\beta(u)\dd u\right) \|\la p\ra^mv(s)\|_r\\
&\le\left(1+K|\lambda|c_0^2\sqrt{1-s_*^2}\right) \|\la p\ra^mv(s)\|_r,
\qquad r\in\{2,\oo\}.
\end{aligned}
\end{equation}
We used \eqref{eq:tga-shortness} and $\int_{s_1}^1\beta(u)\dd u=\sqrt{1-s_1^2}$. In particular,
\[
\D(\la p\ra^3\tga)=-3\lambda\beta\,p\la p\ra(p\pr_qE-j)\tga,
\]
and taking $v=\tga$, $m=3$ in \eqref{eq:tga-weighted-transport} gives
\begin{equation}\label{eq:tga-zero-bound}
\|\la p\ra^3\tga(s_1)\|_r\le\left(1+K|\lambda|c_0^2\sqrt{1-s_*^2}\right)\|\la p\ra^3\ga_\oo\|_r.
\end{equation}

\smallskip
\noindent\textbf{Step 2. First derivatives and two velocity moments: $l=1$.} Direct differentiation yields
\[
\begin{aligned}
\D(\pr_q\tga)
&=-\lambda\beta(\pr_qE)\pr_q\tga +\lambda\beta(p\pr_q^2E-\pr_qj)\pr_p\tga,\\
\D(\pr_p\tga)&=-\pr_q\tga+\lambda\beta(\pr_qE)\pr_p\tga.
\end{aligned}
\]
The factor $p\pr_q^2E$ leads to a loss of velocity moments in a direct estimate. To use the cancellation supplied by the continuity equation, let $A=A(s,q,p)$ and $C=C(s,q,p)$ solve
\begin{equation}\label{eq:AC}
\begin{aligned}
\D A&=C+\lambda\beta(\pr_qE)A,\\
\D C&=\lambda\beta\bigl[(\pr_qj-p\pr_q^2E)A-(\pr_qE)C\bigr],
\end{aligned}
\end{equation}
with
\[
A(s_1,q,p)=a,\qquad C(s_1,q,p)=c,\qquad (a,c)\in\{(1,0),(0,1),(1,1)\}.
\]
The dependence of $A,C$ on $s_1,a,c$ is suppressed. Set $Z=A\pr_q+C\pr_p$. Then
\[
\begin{aligned}
[\D,Z]
={}&\bigl(\D A-\lambda\beta(\pr_qE)A-C\bigr)\pr_q\\
&+\bigl(\D C+\lambda\beta(p\pr_q^2E-\pr_qj)A+\lambda\beta(\pr_qE)C\bigr)\pr_p=0.
\end{aligned}
\]
Consequently,
\begin{equation}\label{eq:tga-commuting}
\D(Z\tga)=0,\qquad \D(Z^2\tga)=0.
\end{equation}

We first estimate $A,C$ and their differences from $a,c$. Multiplying the equation for $\tga$ by $2\tga$ and integrating in $p$ gives
\begin{equation}\label{eq:tga-continuity}
\pr_s\rho+\pr_q(j+\lambda\beta E\rho)=0,\qquad \pr_sE=-j-\lambda\beta E\rho,\qquad \rho=\pr_qE.
\end{equation}
In particular,
\[
\D(\pr_qE)=p\pr_q^2E-\pr_qj-\lambda\beta(\pr_qE)^2.
\]
Using $\beta'=\alpha$, we obtain
\begin{equation}\label{eq:AC-cancellation}
\begin{aligned}
\D^2A={}&\D C+\lambda\alpha(\pr_qE)A+\lambda\beta\D(\pr_qE)A+\lambda\beta(\pr_qE)\D A\\
={}&\lambda\beta\bigl[(\pr_qj-p\pr_q^2E)A-(\pr_qE)C\bigr] +\lambda\alpha(\pr_qE)A\\
&+\lambda\beta\bigl[p\pr_q^2E-\pr_qj-\lambda\beta(\pr_qE)^2\bigr]A+\lambda\beta(\pr_qE)\bigl[C+\lambda\beta(\pr_qE)A\bigr]\\
={}&\lambda\alpha(\pr_qE)A.
\end{aligned}
\end{equation}
The system \eqref{eq:AC} can be constructed on compact time intervals by solving this second-order linear transport equation with
\[
A(s_1)=a,\qquad \D A(s_1)=c+\lambda\beta(s_1)\pr_qE(s_1)a,
\]
and setting $C=\D A-\lambda\beta(\pr_qE)A$.

Applying Lemma~\ref{lem:Hamilton} successively to $A$ and $\D A$, and using Fubini's theorem, gives
\begin{equation}\label{eq:AC-double-transport}
\|A(s)\|_\oo\le\|A(s_1)\|_\oo+(s-s_1)\|\D A(s_1)\|_\oo+\int_{s_1}^s(s-u)\|\D^2A(u)\|_\oo\dd u.
\end{equation}
Consequently,
\begin{equation}\label{eq:AC-A-integral}
\begin{aligned}
\|A(s)\|_\oo\le{}&|a|+(s-s_1)|c|+K|\lambda|c_0^2|a|(s-s_1)\beta(s_1)\\
&+K|\lambda|c_0^2\int_{s_1}^s(s-u)\alpha(u)\|A(u)\|_\oo\dd u.
\end{aligned}
\end{equation}
We use the identities
\begin{equation}\label{eq:AC-kernels}
\begin{aligned}
(s-s_1)\beta(s_1)
&\le(1-s_1)\beta(s_1)=s_1\sqrt{\frac{1-s_1}{1+s_1}}\le\sqrt{1-s_*^2},\\
\int_{s_1}^s(s-u)\alpha(u)\dd u&\le\int_{s_1}^1(1-u)\alpha(u)\dd u=\sqrt{\frac{1-s_1}{1+s_1}}\le\sqrt{1-s_*^2}.
\end{aligned}
\end{equation}
Taking the supremum in \eqref{eq:AC-A-integral} and using \eqref{eq:tga-shortness}, we absorb the last term and obtain
\[
\sup_{s_1\le s<1}\|A(s)\|_\oo\le3.
\]
Applying the same calculation to $A-a$, whose initial value is zero, gives the sharper estimate
\begin{equation}\label{eq:AC-A-close}
\sup_{s_1\le s<1}\|A(s)-a\|_\oo\le K\bigl[(1-s_*)+|\lambda|c_0^2\sqrt{1-s_*^2}\bigr]\le K(1+|\lambda|c_0^2)\sqrt{1-s_*^2}.
\end{equation}
For the second coefficient,
\[
\D\log\la p\ra=\lambda\beta\frac{p(j-p\pr_qE)}{1+p^2},\qquad\|\D\log\la p\ra\|_\oo\le K|\lambda|c_0^2\beta.
\]
Subtracting the constant $c$ in \eqref{eq:AC} and dividing by $\la p\ra$ yields
\begin{equation}\label{eq:AC-C-equation}
\begin{aligned}
\D\left(\frac{C-c}{\la p\ra}\right)+\left(\lambda\beta\pr_qE+\D\log\la p\ra\right)\frac{C-c}{\la p\ra}=\lambda\beta\frac{(\pr_qj-p\pr_q^2E)A-c\pr_qE}{\la p\ra}.
\end{aligned}
\end{equation}
The initial value is zero. By \eqref{eq:tga-field-bootstrap}, the coefficient and source are bounded by $K|\lambda|c_0^2\beta$. Gronwall's inequality therefore gives
\begin{equation}\label{eq:AC-C-close}
\begin{aligned}
\sup_{s_1\le s<1}\left\|\frac{C(s)-c}{\la p\ra}\right\|_\oo
&\le K|\lambda|c_0^2\sqrt{1-s_*^2}\exp\left(K|\lambda|c_0^2\sqrt{1-s_*^2}\right)\\
&\le K|\lambda|c_0^2\sqrt{1-s_*^2}.
\end{aligned}
\end{equation}
In particular, 
\begin{equation}\label{eq:ACbound}
\sup_{s_1\le s<1}\|A(s)\|_\oo\le K,\qquad \sup_{s_1\le s<1}\left\|\frac{C(s)}{\la p\ra}\right\|_\oo\le K.
\end{equation}

Since $\D(Z\tga)=0$, the exact weighted equation is
\[
\D(\la p\ra^2Z\tga)=\{\la p\ra^2,\tK\}Z\tga=2\lambda\beta\frac{p(j-p\pr_qE)}{1+p^2}\la p\ra^2Z\tga.
\]
At the observation time, $Z\tga(s_1)=(a\pr_q+c\pr_p)\tga(s_1)$. At the final time, \eqref{eq:AC-A-close}--\eqref{eq:AC-C-close} imply
\begin{equation}\label{eq:tga-first-terminal-error}
\begin{aligned}
&\|\la p\ra^2\bigl[(Z\tga)(1)-(a\pr_q+c\pr_p)\ga_\oo\bigr]\|_r\le K(1+|\lambda|c_0^2)\sqrt{1-s_*^2}\sum_{j+k=1}\|\la p\ra^3\pr_q^j\pr_p^k\ga_\oo\|_r.
\end{aligned}
\end{equation}
Applying \eqref{eq:tga-weighted-transport} with $m=2$ and choosing $(a,c)=(1,0),(0,1)$, we obtain
\begin{equation}\label{eq:tga-first-bound}
\begin{aligned}
&\sum_{j+k=1}\|\la p\ra^2\pr_q^j\pr_p^k\tga(s_1)\|_r\le\sum_{j+k=1}\|\la p\ra^2\pr_q^j\pr_p^k\ga_\oo\|_r+K(1+|\lambda|c_0^2)\sqrt{1-s_*^2}\sum_{j+k=1}\|\la p\ra^3\pr_q^j\pr_p^k\ga_\oo\|_r.
\end{aligned}
\end{equation}

\smallskip
\noindent\textbf{Step 3. Second derivatives and one velocity moment: $l=2$.}
Commuting $Z$ with \eqref{eq:AC-cancellation}, we obtain
\begin{equation}\label{eq:tga-ZA-equation}
\D^2(ZA)=\lambda\alpha\bigl[(\pr_qE)ZA+(\pr_q^2E)A^2\bigr].
\end{equation}
Since $A,C$ are constant in $(q,p)$ at $s=s_1$,
\[
ZA(s_1)=ZC(s_1)=0,\qquad \D(ZA)(s_1)=\lambda\beta(s_1)\pr_q^2E(s_1)a^2.
\]
Two applications of the transport estimate yield
\[
\begin{aligned}
\|ZA(s)\|_\oo \le{}&K|\lambda|c_0^2(s-s_1)\beta(s_1)|a|^2+K|\lambda|c_0^2\int_{s_1}^s(s-u)\alpha(u) \bigl(\|ZA(u)\|_\oo+\|A(u)\|_\oo^2\bigr)\dd u.
\end{aligned}
\]
Using \eqref{eq:AC-kernels}, \eqref{eq:ACbound}, we have
\begin{equation}\label{eq:tga-ZA-bound}
\sup_{s_1\le s<1}\|ZA(s)\|_\oo\le K|\lambda|c_0^2\sqrt{1-s_*^2}.
\end{equation}
Similarly, applying $Z$ to the second equation in \eqref{eq:AC} gives
\[
\begin{aligned}
\D(ZC)=\lambda\beta\bigl[&(\pr_qj-p\pr_q^2E)ZA+(\pr_q^2j-p\pr_q^3E)A^2-2(\pr_q^2E)AC-(\pr_qE)ZC\bigr].
\end{aligned}
\]
Thus
\begin{equation}\label{eq:tga-ZC-equation}
\begin{aligned}
&\D\left(\frac{ZC}{\la p\ra}\right)+\left(\lambda\beta\pr_qE+\D\log\la p\ra\right)\frac{ZC}{\la p\ra}=\lambda\beta\left[\frac{\pr_qj-p\pr_q^2E}{\la p\ra}ZA+\frac{\pr_q^2j-p\pr_q^3E}{\la p\ra}A^2-2(\pr_q^2E)A\frac C{\la p\ra}\right].
\end{aligned}
\end{equation}
The source is bounded by $K|\lambda|c_0^2\beta$, and the initial value is zero. Gr\"onwall gives
\begin{equation}\label{eq:tga-ZC-bound}
\sup_{s_1\le s<1}\left\|\frac{ZC(s)}{\la p\ra}\right\|_\oo\le K|\lambda|c_0^2\sqrt{1-s_*^2}.
\end{equation}
The expansion
\[
Z^2\tga=A^2\pr_q^2\tga+2AC\pr_q\pr_p\tga+C^2\pr_p^2\tga+(ZA)\pr_q\tga+(ZC)\pr_p\tga
\]
gives
\[
\begin{aligned}
(Z^2\widetilde{\gamma})(1)-(a\partial_q+c\partial_p)^2\gamma_\infty&=(A^2-a^2)\partial_q^2\gamma_\infty+2(AC-ac)\partial_q\partial_p\gamma_\infty+(C^2-c^2)\partial_p^2\gamma_\infty \\
&\quad +(ZA)\partial_q\gamma_\infty+(ZC)\partial_p\gamma_\infty .
\end{aligned}
\]
Hence the previous bounds imply
\begin{equation}\label{eq:tga-second-terminal-error}
\begin{aligned}
&\|\la p\ra\bigl[(Z^2\tga)(1)-(a\pr_q+c\pr_p)^2\ga_\oo\bigr]\|_r\le K(1+|\lambda|c_0^2)\sqrt{1-s_*^2}\sum_{1\le j+k\le2}\|\la p\ra^3\pr_q^j\pr_p^k\ga_\oo\|_r.
\end{aligned}
\end{equation}
For example,
\[
\begin{aligned}
&\|A^2-a^2\|_\oo+\left\|\frac{AC-ac}{\la p\ra}\right\|_\oo+\left\|\frac{C^2-c^2}{\la p\ra^2}\right\|_\oo\le K(1+|\lambda|c_0^2)\sqrt{1-s_*^2},
\end{aligned}
\]
which accounts for the coefficients of the second derivatives in this estimate.

At $s=s_1$, the coefficients $A(s_1,q,p)=a$ and $C(s_1,q,p)=c$ are constant in $(q,p)$, so $ZA=ZC=0$. Hence
\begin{equation}\label{eq:Z^2}
    \begin{split}
        (Z^2_{1,0}\tga )(s_1)=\pr_q^2 \tga(s_1),\quad Z^2_{0,1}\tga(s_1)=\pr_p^2\tga(s_1),\quad \frac{1}{2}(Z^2_{1,1}-Z^2_{1,0}-Z^2_{0,1})\tga(s_1)=\pr_q\pr_p\tga(s_1)
    \end{split}
\end{equation}
Applying \eqref{eq:tga-second-terminal-error} to three choices $(a,c)$ therefore gives 
\[
\left\|\langle p\rangle\left[\frac{1}{2}\left[\left(Z_{1,1}^{2}-Z_{1,0}^{2}-Z_{0,1}^{2}\right)\widetilde{\gamma}\right](1)-\partial_q\partial_p\gamma_\infty\right]\right\|_{r}\le K\left(1+|\lambda|c_0^2\right)\sqrt{1-s_*^2}\sum_{1\le j+k\le 2}\left\|\langle p\rangle^3\partial_q^j\partial_p^k\gamma_\infty\right\|_{r}.
\]
Applying the weighted transport estimate \eqref{eq:tga-weighted-transport} with $m=1$ to the following quantities,
\begin{equation*}
    \begin{split}
        Z^2_{1,0}\tga(s), \quad Z^2_{0,1}\tga(s),\quad \frac{1}{2}(Z^2_{1,1}-Z^2_{1,0}-Z^2_{0,1})\tga(s).
    \end{split}
\end{equation*}
 yields
\begin{equation}\label{eq:tga-second-bound}
\begin{aligned}
\sum_{j+k=2}\|\la p\ra\pr_q^j\pr_p^k\tga(s_1)\|_r\le\sum_{j+k=2}\|\la p\ra\pr_q^j\pr_p^k\ga_\oo\|_r+K(1+|\lambda|c_0^2)\sqrt{1-s_*^2}\sum_{1\le j+k\le2}\|\la p\ra^3\pr_q^j\pr_p^k\ga_\oo\|_r.
\end{aligned}
\end{equation}
Summing \eqref{eq:tga-zero-bound}, \eqref{eq:tga-first-bound}, and \eqref{eq:tga-second-bound} over $r\in\{2,\oo\}$ gives
\begin{equation}\label{eq:tga-bootstrap-closure}
\begin{aligned}
&\sum_{l=0}^2\sum_{j+k=l}\sum_{r\in\{2,\oo\}}\|\la p\ra^{3-l}\pr_q^j\pr_p^k\tga(s_1)\|_r\le\left[1+K(1+|\lambda|c_0^2)\sqrt{1-s_*^2}\right]c_0\le2c_0.
\end{aligned}
\end{equation}
The observation time $s=s_1$ is arbitrary, so the same estimate holds throughout the bootstrap interval. It improves \eqref{eq:bootstrap} from $4c_0$ to $2c_0$. Since the final norms are at most $c_0$, the usual continuity argument closes the bootstrap on $[s_*,1]$.
\end{proof}

\subsection{Construction of the local solution}
We now prove Theorem~\ref{thm:3} by Picard iteration. The only new point is that the continuity equation now involves two consecutive iterates.

\begin{proof}[Proof of Theorem \ref{thm:3}]
We divide the proof into four steps.

\smallskip
\noindent\textbf{Step 1. The linear iteration.}
Set $\tga_{(0)}(s,q,p)=\ga_\oo(q,p)$. Given $\tga_{(n)}$, define
\begin{equation}\label{eq:tga-picard-fields}
\rho_{(n)}=\int_\R\tga_{(n)}^2\dd p,\qquad j_{(n)}=\int_\R p\tga_{(n)}^2\dd p,\qquad E_{(n)}=\frac12\iint_{\R^2}\sgn(q-y)\tga_{(n)}(s,y,p)^2\dd y\dd p.
\end{equation}
Construct $\tga_{(n+1)}$ by solving
\begin{equation}\label{eq:tga-picard}
\begin{cases}
\pr_s\tga_{(n+1)}+(p+\lambda\beta E_{(n)})\pr_q\tga_{(n+1)}+\lambda\beta(j_{(n)}-p\rho_{(n)})\pr_p\tga_{(n+1)}=0,\\
\tga_{(n+1)}(1,q,p)=\ga_\oo(q,p).
\end{cases}
\end{equation}
Write the transport operator in this equation as $\D_n$. Since $\pr_qE_{(n)}=\rho_{(n)}$, its field is divergence free. Its coefficients have at most linear growth in $(q,p)$, and on bounded phase-space sets their required derivatives are bounded by a constant times $1+\beta(s)$. Thus the linear final data problem is well defined at $s=1$, since
\[
\int_{s_*}^1(1+\beta(s))\dd s=1-s_*+\sqrt{1-s_*^2}<\oo.
\]
In particular, each iterate has the prescribed final datum. The following estimates are uniform in $n$.

\smallskip
\noindent\textbf{Step 2. Uniform estimates for the iteration.}
Suppose the preceding iterates satisfy \eqref{eq:bootstrap}. Their fields then satisfy \eqref{eq:tga-field-bootstrap}. For $n\ge1$, multiplying the equation for $\tga_{(n)}$ by $2\tga_{(n)}$ and integrating in $p$ gives
\begin{equation}\label{eq:tga-picard-continuity}
\pr_s\rho_{(n)}+\pr_q\bigl(j_{(n)}+\lambda\beta E_{(n-1)}\rho_{(n)}\bigr)=0,\qquad \pr_sE_{(n)}=-j_{(n)}-\lambda\beta E_{(n-1)}\rho_{(n)}.
\end{equation}
The normalized field gives the second identity; the flux is integrable in $q$ because
\[
\|j_{(n)}\|_{L^1_q}\le\|p\tga_{(n)}\|_2\|\tga_{(n)}\|_2, \qquad\|E_{(n-1)}\rho_{(n)}\|_{L^1_q} \le\|E_{(n-1)}\|_\oo\|\tga_{(n)}\|_2^2.
\]
Hence
\[
\D_n\rho_{(n)}=p\pr_q\rho_{(n)}-\pr_qj_{(n)}+\lambda\beta\bigl[(E_{(n)}-E_{(n-1)})\pr_q\rho_{(n)}-\rho_{(n-1)}\rho_{(n)}\bigr].
\]
Construct $A,C$ as in \eqref{eq:AC}, with $\D,E,j$ replaced by $\D_n,E_{(n)},j_{(n)}$. The same commutator calculation gives $[\D_n,Z]=0$, while the second-order equation now reads
\begin{equation}\label{eq:tga-frozen-cancellation}
\D_n^2A=\left[\lambda\alpha\rho_{(n)}+\lambda^2\beta^2\pr_q\bigl((E_{(n)}-E_{(n-1)})\rho_{(n)}\bigr)\right]A, \qquad n\ge1.
\end{equation}
And $\pr_s\rho_{(0)}=0$, and the corresponding identity is
\begin{equation}\label{eq:tga-seed-cancellation}
\D_0^2A=\left[\lambda\alpha\rho_{(0)}+\lambda\beta\pr_qj_{(0)}+\lambda^2\beta^2\pr_q(E_{(0)}\rho_{(0)})\right]A.
\end{equation}
For $n\ge1$, the uniform field bounds imply that, for $k=0,1$,
\[
\left\|\pr_q^k\left[\lambda^2\beta(s)^2\pr_q\bigl((E_{(n)}-E_{(n-1)})\rho_{(n)}\bigr)\right]\right\|_\infty\le K\lambda^2c_0^4\beta(s)^2.
\]
Indeed, using $\pr_qE_{(n)}=\rho_{(n)}$, we obtain
\[
\begin{aligned}
\pr_q\bigl[(E_{(n)}-E_{(n-1)})\rho_{(n)}\bigr]=(\rho_{(n)}-\rho_{(n-1)})\rho_{(n)}+(E_{(n)}-E_{(n-1)})\pr_q\rho_{(n)},
\end{aligned}
\]
and
\[
\begin{aligned}
\pr_q^2\bigl[(E_{(n)}-E_{(n-1)})\rho_{(n)}\bigr]={}&(\pr_q\rho_{(n)}-\pr_q\rho_{(n-1)})\rho_{(n)}+2(\rho_{(n)}-\rho_{(n-1)})\pr_q\rho_{(n)}+(E_{(n)}-E_{(n-1)})\pr_q^2\rho_{(n)},
\end{aligned}
\]
where each product on the right-hand sides is bounded in $L^\infty$ by $Kc_0^4$.

For the first iteration, $n=0$, the additional terms in the coefficient of $A$ are
\[
\lambda\beta(s)\pr_qj_{(0)}+\lambda^2\beta(s)^2\pr_q(E_{(0)}\rho_{(0)}).
\]
The same field bounds give
\[
\begin{aligned}
&\sum_{k=0}^1\left\|\pr_q^k\left[\lambda\beta(s)\pr_qj_{(0)}+\lambda^2\beta(s)^2\pr_q(E_{(0)}\rho_{(0)})\right]\right\|_\infty\le K|\lambda|c_0^2\beta(s)+K\lambda^2c_0^4\beta(s)^2.
\end{aligned}
\]
Their twice-integrated contributions are small, since
\begin{equation}\label{eq:tga-frozen-kernels}
\begin{aligned}
\int_{s_1}^1(1-u)\beta(u)\dd u
&\le(1-s_1)\sqrt{1-s_1^2},\\\int_{s_1}^1(1-u)\beta(u)^2\dd u
&=\int_{s_1}^1\frac{u^2}{1+u}\dd u\le1-s_1\le1-s_*^2.
\end{aligned}
\end{equation}
In particular, the $\beta^2$ contribution is at most
\[
K\lambda^2c_0^4(1-s_*^2)\le K\bigl[(1+|\lambda|c_0^2)\sqrt{1-s_*^2}\bigr]^2.
\]
Thus the proof of \eqref{eq:AC-A-close} applies with the same small bound. Applying $Z$ to \eqref{eq:tga-frozen-cancellation} or \eqref{eq:tga-seed-cancellation} differentiates the bracket only once in $q$, because it is independent of $p$. The estimates for $ZA$ therefore also hold. The equations for $(C-c)/\la p\ra$ and $ZC/\la p\ra$ are unchanged apart from their indices. We obtain
\begin{equation}\label{eq:tga-frozen-coefficients}
\begin{aligned}
\sup_{s_1\le s<1}\left(\|A-a\|_\oo+\left\|\frac{C-c}{\la p\ra}\right\|_\oo+\|ZA\|_\oo+\left\|\frac{ZC}{\la p\ra}\right\|_\oo\right)\le K(1+|\lambda|c_0^2)\sqrt{1-s_*^2}.
\end{aligned}
\end{equation}
Lemma~\ref{lem:tga-bootstrap} now applies to $\tga_{(n+1)}$. Therefore
\begin{equation}\label{eq:tga-picard-uniform}
\begin{aligned}
&\sup_{s_*\le s\le1}\sum_{l=0}^2\sum_{j+k=l}\sum_{r\in\{2,\oo\}}\|\la p\ra^{3-l}\pr_q^j\pr_p^k\tga_{(n+1)}(s)\|_r\\
&\qquad\le\left[1+K(1+|\lambda|c_0^2)\sqrt{1-s_*^2}\right]c_0\le2c_0.
\end{aligned}
\end{equation}
This establishes the uniform estimates on the common interval $[s_*,1]$.

\smallskip
\noindent\textbf{Step 3. Convergence of the iteration.}
For $n\ge1$, write
\[
d_n(s)=\|\tga_{(n)}(s)-\tga_{(n-1)}(s)\|_2+\|\tga_{(n)}(s)-\tga_{(n-1)}(s)\|_\oo.
\]
The field differences satisfy
\[
\|E_{(n)}-E_{(n-1)}\|_\oo\le\frac12\|\tga_{(n)}+\tga_{(n-1)}\|_2\|\tga_{(n)}-\tga_{(n-1)}\|_2\le Kc_0d_n.
\]
Similarly,
\[
\begin{aligned}
\|\rho_{(n)}-\rho_{(n-1)}\|_\oo+\|j_{(n)}-j_{(n-1)}\|_\oo\le Kc_0\|\tga_{(n)}-\tga_{(n-1)}\|_\oo\int_\R(1+|p|)\la p\ra^{-3}\dd p\le Kc_0d_n.
\end{aligned}
\]
Subtracting consecutive equations gives
\begin{equation}\label{eq:tga-picard-difference}
\begin{aligned}
\D_n(\tga_{(n+1)}-\tga_{(n)})=-\lambda\beta\bigl[
(E_{(n)}-E_{(n-1)})\pr_q\tga_{(n)}+\bigl(j_{(n)}-j_{(n-1)}-p(\rho_{(n)}-\rho_{(n-1)})\bigr)\pr_p\tga_{(n)}\bigr].
\end{aligned}
\end{equation}
The final difference is zero. Using Lemma~\ref{lem:Hamilton} and the first-derivative bounds, we obtain
\begin{equation}\label{eq:tga-picard-recurrence}
d_{n+1}(s)\le K|\lambda|c_0^2\int_s^1\beta(u)d_n(u)\dd u,\qquad n\ge1.
\end{equation}
For the first difference, $\D_0(\tga_{(1)}-\ga_\oo)$ has source
\[
-(p+\lambda\beta E_{(0)})\pr_q\ga_\oo-\lambda\beta(j_{(0)}-p\rho_{(0)})\pr_p\ga_\oo.
\]
Its $L^2$ and $L^\oo$ norms are bounded by
$K(c_0+|\lambda|c_0^3)(1+\beta)$, so
\[
d_1(s)\le K(c_0+|\lambda|c_0^3)\sqrt{1-s^2}.
\]
Increasing the  constant $K$, induction in \eqref{eq:tga-picard-recurrence} yields
\begin{equation}\label{eq:tga-picard-factorial}
d_n(s)\le c_0\frac{\bigl[K(1+|\lambda|c_0^2)\sqrt{1-s^2}\bigr]^n}{n!}, \qquad n\ge1.
\end{equation}
Here we used
\[
\int_s^1\beta(u)(1-u^2)^{n/2}\dd u=\frac{(1-s^2)^{(n+1)/2}}{n+1}.
\]
Consequently, for $m>n$,
\[
\begin{aligned}
&\sup_{s_*\le s\le1}\left(\|\tga_{(m)}(s)-\tga_{(n)}(s)\|_2+\|\tga_{(m)}(s)-\tga_{(n)}(s)\|_\oo\right)\le c_0\sum_{k=n+1}^\oo \frac{\bigl[K(1+|\lambda|c_0^2)\sqrt{1-s_*^2}\bigr]^k}{k!} \longrightarrow0.
\end{aligned}
\]
Thus $\tga_{(n)}$ converges to $\tga$ in $C([s_*,1];L^2\cap L^\oo)$, and its fields converge uniformly. The interpolation inequality
\[
\|\nab_{q,p}(\tga_{(n)}-\tga_{(m)})\|_r\le K\|\tga_{(n)}-\tga_{(m)}\|_r^{1/2}\|\nab_{q,p}^2(\tga_{(n)}-\tga_{(m)})\|_r^{1/2},\qquad r\in\{2,\oo\},
\]
and the uniform second-derivative estimates give convergence of the first derivatives as well. Passing to the limit proves \eqref{eq:tK}, the final condition, and the asserted classical regularity for $s<1$. The weighted bounds pass to the limit, with second derivatives understood weakly, and imply \eqref{eq:tga-solution-bound}. The standard regularization of the final datum gives the same conclusions under \eqref{eq:tga-initial}.

\smallskip
\noindent\textbf{Step 4. Endpoint convergence and uniqueness.}
The equation and the uniform bounds imply
\begin{equation}\label{eq:tga-time-bound}
\begin{aligned}
\|\pr_s\tga(s)\|_2+\|\pr_s\tga(s)\|_\oo&\le K(c_0+|\lambda|c_0^3)(1+\beta(s)),\\
\|\tga(s)-\ga_\oo\|_2+\|\tga(s)-\ga_\oo\|_\oo&\le K(c_0+|\lambda|c_0^3)\bigl(1-s+\sqrt{1-s^2}\bigr)\les\sqrt{1-s^2}.
\end{aligned}
\end{equation}
This proves \eqref{eq:tga-endpoint-rate}. The uniform convergence of the first derivatives also gives their continuous endpoint traces.

Finally, let $\tga$ and $\widehat\ga$ be two solutions in the stated class with the same final datum. Repeating the field-difference estimates and \eqref{eq:tga-picard-difference} gives
\[
\begin{aligned}
\|\tga(s)-\widehat\ga(s)\|_2+\|\tga(s)-\widehat\ga(s)\|_\oo\le K|\lambda|c_0^2\int_s^1\beta(u)\left(\|\tga(u)-\widehat\ga(u)\|_2+\|\tga(u)-\widehat\ga(u)\|_\oo\right)\dd u.
\end{aligned}
\]
 Gronwall gives $\tga=\widehat\ga$, proving uniqueness.
\end{proof}

\medskip
Finally, we are ready to prove Theorem \ref{thm:main2}.

\begin{proof}[Proof of Theorem~\ref{thm:main2}]
We first prescribe the final datum for the corrected equation. Set
\begin{equation}\label{eq:wave-terminal-profile}
\ga_\oo(q,p)=\mu_\oo\left(q,p+\frac{\lambda^2}{2}E_\oo(q)\pr_qE_\oo(q)\right).
\end{equation}
The finite momentum translation in \eqref{eq:wave-terminal-profile} is needed to recover the prescribed profile $\mu_\oo$ in the coordinates of \eqref{eq:mscatter}.

\smallskip
\noindent\textbf{Step 1. Application of Theorem~\ref{thm:3}.}
Translation in momentum preserves the spatial density, so
\[
\int_\R\ga_\oo(q,p)^2\dd p=\int_\R\mu_\oo(q,p)^2\dd p,\qquad E[\ga_\oo]=E[\mu_\oo]=E_\oo.
\]
The assumptions on $\mu_\oo$ imply
\[
\|E_\oo\|_\oo+\|\pr_qE_\oo\|_\oo+\|\pr_q^2E_\oo\|_\oo+\|\pr_q^3E_\oo\|_\oo<\oo.
\]
Indeed,
\[
\begin{aligned}
\pr_qE_\oo&=\int_\R\mu_\oo^2\dd p,\\
\pr_q^2E_\oo&=2\int_\R\mu_\oo\pr_q\mu_\oo\dd p,\\
\pr_q^3E_\oo&=2\int_\R\bigl((\pr_q\mu_\oo)^2+\mu_\oo\pr_q^2\mu_\oo\bigr)\dd p.
\end{aligned}
\]
Moreover,
\[
\begin{aligned}
\pr_q(E_\oo\pr_qE_\oo)
&=(\pr_qE_\oo)^2+E_\oo\pr_q^2E_\oo,\\
\pr_q^2(E_\oo\pr_qE_\oo)
&=3(\pr_qE_\oo)(\pr_q^2E_\oo)+E_\oo\pr_q^3E_\oo.
\end{aligned}
\]
Thus the translation in \eqref{eq:wave-terminal-profile} and its first two derivatives are bounded. Since it also preserves the measure $\dd q\dd p$ and
\[
\left\la p+\frac{\lambda^2}{2}E_\oo(q)\pr_qE_\oo(q)\right\ra\simeq\la p\ra,
\]
the chain rule gives
\[
\sum_{j+k\le2}\sum_{r\in\{2,\oo\}}\|\la p\ra^3\pr_q^j\pr_p^k\ga_\oo\|_r<\oo.
\]
We may therefore choose $c_0$ so that \eqref{eq:tga-initial} holds. Theorem \ref{thm:3} yields a solution $\tga$ on $[s_*,1]$ satisfying \eqref{eq:tga-solution-bound} and
\begin{equation}\label{eq:wave-terminal-convergence}
\|\tga(s)-\ga_\oo\|_2+\|\tga(s)-\ga_\oo\|_\oo\les\sqrt{1-s^2}.
\end{equation}

\smallskip
\noindent\textbf{Step 2. Recovery of $\ga$ and continuation to $s=0$.}
Multiplying \eqref{eq:tK} by $2\tga$ and integrating in momentum gives
\[
\pr_s\rho+\pr_q(j+\lambda\beta E\rho)=0.
\]
The flux is integrable in $q$, since
\[
\|j(s)\|_{L^1_q}\le\|p\tga(s)\|_2\|\tga(s)\|_2,\qquad\|E(s)\rho(s)\|_{L^1_q}\le\|E(s)\|_\oo\|\tga(s)\|_2^2.
\]
Consequently, the electric field satisfies
\begin{equation}\label{eq:wave-field-evolution}
\pr_sE=-j-\lambda\beta E\pr_qE.
\end{equation}
Undo the correction by setting
\[
\ga(s,q,p)=\tga\bigl(s,q,p-\lambda\beta(s)E(s,q)\bigr),\qquad s_*\le s<1.
\]
This translation preserves the density and the electric field. Using \eqref{eq:tK}, $\beta'=\alpha$, and \eqref{eq:wave-field-evolution}, we obtain
\[
\begin{aligned}
&(\pr_s+p\pr_q+\lambda\alpha E\pr_p)\ga(s,q,p)\\
&\quad=-\lambda\beta(s)\bigl(\pr_sE+j+\lambda\beta E\pr_qE\bigr)(s,q)\pr_p\tga\bigl(s,q,p-\lambda\beta(s)E(s,q)\bigr)=0.
\end{aligned}
\]
Hence $\ga$ solves \eqref{eq:lensP}. At the fixed time $s_*$, the momentum translation and its spatial derivative are bounded. In particular,
\[
\begin{aligned}
\pr_q\ga(s,q,p)
&=\bigl(\pr_q\tga-\lambda\beta(s)(\pr_qE)\pr_p\tga\bigr)\bigl(s,q,p-\lambda\beta(s)E(s,q)\bigr),\\
\pr_p\ga(s,q,p)&=\pr_p\tga\bigl(s,q,p-\lambda\beta(s)E(s,q)\bigr),
\end{aligned}
\]
and \eqref{eq:tga-solution-bound} implies
\[
\|\la p\ra\ga(s_*)\|_2+\|\la p\ra^3\ga(s_*)\|_\oo+\|\la p\ra\nab_{q,p}\ga(s_*)\|_\oo<\oo.
\]
Proposition \ref{prop:time} applies backwards from $s_*$ to $0$. Indeed, its finite-time estimates use only the boundedness of the time coefficient and the absolute value of $\lambda$, and
\[
\int_0^{s_*}\alpha(u)\dd u=\beta(s_*)<\oo.
\]
We obtain a unique continuation to $s=0$, with
\[
\sup_{0\le s\le s_*}\left(\|\la p\ra\ga(s)\|_2+\|\la p\ra^3\ga(s)\|_\oo+\|\la p\ra\nab_{q,p}\ga(s)\|_\oo\right)<\oo.
\]
Set $\mu_0=\ga(0)$. Then \eqref{eq:data} holds. Since the lens transform is the identity at $s=0$ and maps $[0,\oo)$ onto $[0,1)$, its inverse gives a global classical solution $\mu$ of \eqref{eq:VPH} with initial datum $\mu_0$.

\medskip
\noindent\textbf{Step 3. Expansion of the electric field.}
The normalized field formula and \eqref{eq:wave-terminal-convergence} give
\[
\begin{aligned}
\|E(s)-E_\oo\|_\oo
&\le\frac12\bigl(\|\tga(s)\|_2+\|\ga_\oo\|_2\bigr)\|\tga(s)-\ga_\oo\|_2\les\sqrt{1-s^2},\\
\|\pr_qE(s)-\pr_qE_\oo\|_\oo&\le\|\tga(s)-\ga_\oo\|_\oo\sup_q\int_\R\bigl(|\tga(s,q,p)|+|\ga_\oo(q,p)|\bigr)\dd p\les\sqrt{1-s^2}.
\end{aligned}
\]
Integrating \eqref{eq:wave-field-evolution} from $s$ to $1$ now yields
\begin{equation}\label{eq:wave-field-expansion-final}
\begin{aligned}
E(s,q)&=E_\oo(q)+\int_s^1\bigl(j+\lambda\beta E\pr_qE\bigr)(u,q)\dd u\\
&=E_\oo(q)+\lambda\sqrt{1-s^2}\,E_\oo(q)\pr_qE_\oo(q)+\cO(1-s^2).
\end{aligned}
\end{equation}
The error is uniform in $q$: the current contributes $\cO(1-s)$, while
\[
\begin{aligned}
&\int_s^1\beta(u)\|E(u)\pr_qE(u)-E_\oo\pr_qE_\oo\|_\oo\dd u\les\int_s^1\beta(u)\sqrt{1-u^2}\dd u=\int_s^1u\dd u=\frac{1-s^2}{2}.
\end{aligned}
\]

\medskip
\noindent\textbf{Step 4. Identification of the prescribed scattering profile.}
Use $\Phi$ and $\nu$ from Section~\ref{sec:main}, with the prescribed field $E_\oo$. Thus
\[
\nu(s,q,p)=\tga\left(s,\Phi^1(s,q,p),\Phi^2(s,q,p)-\lambda\beta(s)E\bigl(s,\Phi^1(s,q,p)\bigr)\right).\]
First consider $p=0$. Since
\[
\Phi^1(s,q,0)=q-\lambda(1-s)\beta(s)E_\oo(q),\qquad (1-s)\beta(s)\les\sqrt{1-s^2},
\]
Taylor's formula and \eqref{eq:wave-field-expansion-final} give
\[
\begin{aligned}
E\bigl(s,\Phi^1(s,q,0)\bigr)&=E_\oo(q)+\lambda\bigl[\sqrt{1-s^2}-(1-s)\beta(s)\bigr]E_\oo(q)\pr_qE_\oo(q)+\cO(1-s^2).
\end{aligned}
\]
Using the exact identity
\[
\beta(s)\bigl[\sqrt{1-s^2}-(1-s)\beta(s)\bigr]=\frac{s}{1+s},
\]
we get
\[
\begin{aligned}
&\Phi^2(s,q,0)-\lambda\beta(s)E\bigl(s,\Phi^1(s,q,0)\bigr)=-\lambda^2\frac{s}{1+s}E_\oo(q)\pr_qE_\oo(q)+\cO\bigl(\sqrt{1-s^2}\bigr).
\end{aligned}
\]
For arbitrary $p$, differentiation gives
\[
\pr_p\left[\Phi^2(s,q,p)-\lambda\beta(s)E\bigl(s,\Phi^1(s,q,p)\bigr)\right]=1+\lambda(1-s)\beta(s)\pr_qE\bigl(s,\Phi^1(s,q,p)\bigr).
\]
Integrating this identity from $0$ to $p$ and using the uniform bound for $\pr_qE$, we obtain
\begin{equation}\label{eq:wave-coordinate-errors-final}
\begin{aligned}
|\Phi^1(s,q,p)-q|&\les\sqrt{1-s^2}\la p\ra,\\
\Big|\Phi^2(s,q,p)-\lambda\beta(s)E\bigl(s,\Phi^1(s,q,p)\bigr)&{}-p+\frac{\lambda^2}{2}E_\oo(q)\pr_qE_\oo(q)\Big|\les\sqrt{1-s^2}\la p\ra.
\end{aligned}
\end{equation}
For $s$ sufficiently close to $1$, the momentum brackets along the segment joining these corrected arguments to
\[
\left(q,p-\frac{\lambda^2}{2}E_\oo(q)\pr_qE_\oo(q)\right)
\]
are comparable to $\la p\ra$. Indeed, the finite translation is bounded and the remaining error in momentum is at most $C\sqrt{1-s^2}\la p\ra$. The mean value theorem, the weighted first derivatives of $\ga_\oo$, and \eqref{eq:wave-coordinate-errors-final} therefore give
\[
\begin{aligned}
&\Big|\ga_\oo\left(\Phi^1(s,q,p),\Phi^2(s,q,p)-\lambda\beta(s)E\bigl(s,\Phi^1(s,q,p)\bigr)\right)\\
&\qquad-\ga_\oo\left(q,p-\frac{\lambda^2}{2}E_\oo(q)\pr_qE_\oo(q)\right)\Big|\les\sqrt{1-s^2}\la p\ra^{-2}.
\end{aligned}
\]
By \eqref{eq:wave-terminal-profile}, the second term is exactly $\mu_\oo(q,p)$. Combining this estimate with \eqref{eq:wave-terminal-convergence} proves
\[
\|\nu(s)-\mu_\oo\|_\oo\les\sqrt{1-s^2}.
\]
Finally, the inverse lens transform gives
\[
\nu(\tanh t,q,p)=\mu\bigl(t,\mathcal X(t,q-p,p),\mathcal V(t,q-p,p)\bigr),\qquad \sqrt{1-\tanh^2t}=\frac1{\cosh t}\le2e^{-t}.
\]
This proves \eqref{eq:mscatter} with the prescribed $\mu_\oo$.

\medskip
Finally, for the final datum \eqref{eq:wave-terminal-profile}, uniqueness in Theorem~\ref{thm:3} identifies any two corrected solutions in its stated class on their common endpoint interval. Undoing the momentum correction then identifies their solutions of \eqref{eq:lensP}. Finite-time uniqueness in Proposition~\ref{prop:time} extends this equality to $s=0$. In particular, the initial datum obtained by this construction is independent of the choice of $s_*$, and $\mu_\oo\mapsto\mu_0$ defines a modified wave operator.
\end{proof}

\appendix
\section{Finite-time construction and continuation}\label{app:local}
We construct the Vlasov--Poisson solution and then the Burgers correction by Picard iteration.  

\begin{proposition}\label{prop:time}
Under \eqref{eq:data}, the lens equation \eqref{eq:lensP}, with the symmetrically normalized electric field, has a unique $C^1$ solution on $[0,1)\times\R^2$. For every $T<1$,
\[
 \sup_{0\le s\le T}\left( \|\la p\ra^3\gamma(s)\|_\infty +\|\la p\ra\gamma(s)\|_2+\|\la p\ra\nab_{q,p}\gamma(s)\|_\infty +\|E(s)\|_\infty+\|\pr_qE(s)\|_\infty +\|\pr_q^2E(s)\|_\infty\right)<\infty.
\]
 Moreover, \eqref{eq:Burg} has a unique local solution with two continuous spatial derivatives. It continues past any time $T<1$ at which $\Gamma$ and $\pr_q\Gamma$ remain bounded.
\end{proposition}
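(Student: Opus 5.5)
The proof has two parts: a Picard iteration for \eqref{eq:lensP} on $[0,T]$ with $T<1$ fixed, and then a separate treatment of the Burgers correction. On $[0,T]$ the weight $\alpha(s)\le\alpha(T)$ is bounded, so the problem is a regular one-dimensional Vlasov--Poisson system with a bounded time coefficient.

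\textbf{Iteration for $\gamma$.} Set $\gamma_{(0)}=\mu_0$. Given $\gamma_{(n)}$, let $\gamma_{(n+1)}$ solve the linear transport equation with field $(p,\lambda\alpha(s)E_{(n)})$. This field is divergence free, since $E_{(n)}$ depends only on $(s,q)$. It is $C^1$ in $q$ with $\pr_qE_{(n)}=\rho_{(n)}$, and it is bounded by $\|E_{(n)}\|_\infty\le M/2$. By Lemma~\ref{lem:Hamilton}, mass is exactly conserved along the iteration. This gives the uniform bound $|E_{(n)}|\le M/2$ with no smallness assumption, and this bound is the key input.

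\textbf{Uniform bounds.} Consequently $|\dot p|\le |\lambda|\alpha(T)M/2$ along characteristics. Hence $\la p\ra$ changes by at most a bounded factor on $[0,T]$, and $\|\la p\ra^3\gamma_{(n+1)}\|_\infty$ and $\|\la p\ra\gamma_{(n+1)}\|_2$ stay bounded by $C(T)C_0$ uniformly in $n$. This in turn bounds $\rho_{(n)}$ uniformly. For derivatives, differentiate the equation: $\D(\pr_q\gamma)=-\lambda\alpha\rho_{(n)}\pr_p\gamma$ and $\D(\pr_p\gamma)=-\pr_q\gamma$. Gronwall with the bounded coefficient $\alpha(T)\|\rho_{(n)}\|_\infty$ gives bounds on $\la p\ra\nab\gamma_{(n+1)}$. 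These bounds do not require $\pr_q^2E_{(n)}$, because the field enters only through $\rho_{(n)}$. Then $\|\pr_q^2E_{(n)}\|_\infty\les\|\la p\ra^3\gamma\|_\infty\|\la p\ra\pr_q\gamma\|_\infty$.

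\textbf{Convergence and uniqueness.} For contraction, estimate differences in $L^2\cap L^\infty$: $\|E_{(n)}-E_{(n-1)}\|_\infty\le C\|\gamma_{(n)}-\gamma_{(n-1)}\|_2$, and the source term $\lambda\alpha(E_{(n)}-E_{(n-1)})\pr_p\gamma_{(n)}$ is controlled by the weighted derivative bound. This yields a factorial-type recursion, exactly as in Step~3 of the proof of Theorem~\ref{thm:3}. The limit is $C^1$; this follows by Arzel\`a--Ascoli on compacts, or by the uniform Lipschitz bound together with the observation that the characteristics of the limit field are $C^1$, so $\gamma=\mu_0\circ\Phi_s^{-1}$. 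Uniqueness follows from the same difference estimate. Since $T<1$ was arbitrary, we obtain the solution on $[0,1)$.

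\textbf{Burgers correction.} On $[0,T]$, the forcing $F=\lambda\alpha E$ is $C^2$ in $q$ with bounded derivatives. Solve \eqref{eq:Burg} by characteristics, $\dot Q=\Gamma$ and $\dot\Gamma=F(s,Q)$. Written for the map $q_0\mapsto(Q,\Gamma)$, this is a smooth ODE system in which the derivatives $\pr_{q_0}Q$ and $\pr_{q_0}\Gamma$ solve the linearized system. The solution $\Gamma(s,\cdot)$ is well defined and $C^2$ as long as $q_0\mapsto Q(s,q_0)$ remains a diffeomorphism, that is, as long as $\pr_{q_0}Q>0$. Along a characteristic, $\pr_{q_0}Q(s)=\exp\int_0^s\pr_q\Gamma(\tau,Q)\dd\tau$. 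Therefore a bound on $\pr_q\Gamma$ prevents shocks, and a bound on $\Gamma$ keeps characteristics finite. This gives both the local existence and the stated continuation criterion. The $C^2$ regularity is obtained by differentiating the linearized system once more, which uses $\pr_q^2E$ from the bounds for $\gamma$.

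\textbf{Main obstacle.} The main difficulty is the regularity bookkeeping. We must ensure that $\Gamma\in C^2$ needs only $E\in C^2$, which is available from $\la p\ra\pr_q\gamma\in L^\infty$. We must also confirm that the $C^1$ solution satisfies the equation classically, not merely as a limit.
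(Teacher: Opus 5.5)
\textbf{Gap in the contraction step.} Your moment and derivative bounds for the iterates are sound, and so is the Burgers part. The contraction step for $\gamma$, however, does not close under \eqref{eq:data}.

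You estimate $\gamma_{(n+1)}-\gamma_{(n)}$ in $L^2\cap L^\infty$, using $\|E_{(n)}-E_{(n-1)}\|_\infty\le C\|\gamma_{(n)}-\gamma_{(n-1)}\|_2$. The $L^2$ part of the transport estimate then needs the source $\lambda\alpha(E_{(n)}-E_{(n-1)})\pr_p\gamma_{(n)}$ in $L^2_{q,p}$, that is, $\|\pr_p\gamma_{(n)}\|_2<\infty$. Nothing supplies this. Assumption \eqref{eq:data} controls $\la v\ra\nab_{x,v}\mu_0$ only in $L^\infty$, and the propagated bound $|\pr_p\gamma_{(n)}|\les\la p\ra^{-1}$ is not square integrable on $\R^2$, since there is no decay in $q$. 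For example, $\mu_0=\la q\ra^{-1}\la p\ra^{-3}\sin(\la q\ra p)$ satisfies \eqref{eq:data}, but $\pr_p\mu_0\notin L^2$.

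Your analogy with Step~3 of Theorem~\ref{thm:3} breaks exactly here, because \eqref{eq:tga-initial} includes $L^2$ norms of derivatives. Dropping the $L^2$ part does not rescue the argument. The $\sgn$-convolution formula for $E_{(n)}-E_{(n-1)}$ integrates over all $y\in\R$, so $\|\gamma_{(n)}-\gamma_{(n-1)}\|_\infty$ does not control it. Your uniqueness argument, which reuses this difference estimate, has the same defect.

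\textbf{The missing idea.} Measure the field difference through the continuity equation, not through the $\sgn$ formula. This is what the paper does. Multiply the iterate's equation by $2\gamma_{(n+1)}$ and integrate in $p$. For the symmetrically normalized field this gives $\pr_sE_{(n+1)}=-\int_\R p\gamma_{(n+1)}^2\dd p$, and hence the representation $E_{(n+1)}(s,q)=E[\mu_0](q)-\int_0^s\int_\R p\gamma_{(n+1)}^2\dd p\dd r$, which is local in $q$.

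Using the cubic moment, $\int_\R|p||\gamma_{(n+1)}^2-\gamma_{(n)}^2|\dd p\le C_T\|\gamma_{(n+1)}-\gamma_{(n)}\|_\infty$. Combine this with the $L^\infty$ transport estimate $\|\gamma_{(n+1)}-\gamma_{(n)}\|_\infty\le C_T\int_0^s e_n$, where $e_n=\|E_{(n)}-E_{(n-1)}\|_\infty$. This yields $e_{n+1}(s)\le C_T\int_0^s(s-u)e_n(u)\dd u$. That recursion gives factorial decay, and in the same way uniqueness, entirely in $L^\infty$. Comparing the characteristic flows of consecutive iterates, with the bound $\|\rho_{(n)}\|_\infty$, would also work.

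\textbf{The Burgers part.} Your Lagrangian treatment of \eqref{eq:Burg} is a valid alternative to the paper's Picard iteration in $W^{1,\infty}$. It makes the continuation criterion transparent through $\pr_{q_0}Q=\exp\int_0^s\pr_q\Gamma$. It also shows that the bound on $\Gamma$ is automatic, since $|\Gamma|\le|\lambda|M\beta/2$ along characteristics. You should add one point: uniqueness follows because the characteristics of any $C^1$ solution solve the same ODE system.
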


\begin{proof}
The proof follows from standard Picard iteration. Fix $T<1$. In this proof, $C_T$ denotes a finite constant depending only on $T$, $C_0$, and $\lambda$, and may increase from line to line.
\smallskip
Set $E_{(0)}(s,q)=E[\mu_0](q)$. 

\noindent\emph{Step 1. Construction of the iterating sequence.}
For $n\ge0$, solve
\begin{equation}\label{eq:app-picard}
\begin{cases}
 \pr_s\ga_{(n+1)}+\{\ga_{(n+1)},\H_n \}=0,\quad \H_n={p^2}/{2}-\lambda\alpha(s)\phi_n(s,q). \\
 \gamma_{(n+1)}(0,q,p)=\mu_0(q,p),
\end{cases}
\end{equation}
Since 
\[
\{\la p\ra^k,\H_n\}=kp{\la p\ra}^{k-2}\lambda\alpha(s)E_n(s,q),
\]
by Lemma \ref{lem:Hamilton}, we have
\[
 \|\la p\ra\gamma_{(n+1)}(s)\|_2 \le\|\la p\ra\mu_0\|_2\exp\left(|\lambda|\int_0^s\alpha(r)\|E_{(n)}(r)\|_\infty\dd r\right).
\]
 For the derivatives, differentiating \eqref{eq:lensH} gives
\begin{equation}\label{eq:linD}
    \begin{split}
        \pr_s (\pr_q\ga_{(n+1)})+\{\pr_q\ga_{(n+1)},\H_n \}&=-\lambda \alpha(s)(\pr_q E_n)\pr_p\ga_{(n+1)},\\
        \pr_s(\pr_p\ga_{n+1})+\{\pr_p\ga_{(n+1)},\H_{n} \}&=-\pr_q\ga_{(n+1)}.
    \end{split}
\end{equation}
Similarly, we have 
\begin{equation}\label{eq:app-square}
 \pr_s(\gamma_{(n+1)}^2)+\pr_q(p\gamma_{(n+1)}^2) +\pr_p(\lambda\alpha E_{(n)}\gamma_{(n+1)}^2)=0.
\end{equation}
In particular, with $M=\|\mu_0\|_2^2$,
\begin{equation}\label{eq:app-mass}
 \|\gamma_{(n+1)}(s)\|_2^2=M,
 \qquad \|E_{(n+1)}(s)\|_\infty\le M/2.
\end{equation}
Integrating \eqref{eq:app-square} in $p$, and then convolving in $q$ with $\frac12\sgn$, yields
\begin{equation}\label{eq:app-continuity}
\begin{split}
 &\pr_s\int\gamma_{(n+1)}^2\dd p  +\pr_q\int p\gamma_{(n+1)}^2\dd p=0,\\
&\pr_qE_{(n+1)}=\int\gamma_{(n+1)}^2\dd p,
 \qquad \pr_sE_{(n+1)}=-\int p\gamma_{(n+1)}^2\dd p.
\end{split}
\end{equation}
Thus the field also satisfies the integral update
\begin{equation}\label{eq:iter}
 E_{(n+1)}(s,q)=E[\mu_0](q)
 -\int_0^s\int_\R p\gamma_{(n+1)}(r,q,p)^2\dd p\dd r.
\end{equation}

\smallskip

\noindent\emph{Step 2. Uniform estimates.}

For $m\ge0$,
\begin{equation}\label{eq:app-weight}
\pr_s (\la p\ra^m\gamma_{(n+1)})+\{\la p\ra^m\ga_{(n+1)},\H_n\}=m\lambda\alpha E_{(n)}\frac{p}{1+p^2}
       \la p\ra^m\gamma_{(n+1)}.
\end{equation}
Lemma \ref{lem:Hamilton}, followed by the weighted $L^2$ identity, gives
\begin{equation}\label{eq:app-uniform-moments}
\begin{aligned}
 \|\la p\ra^3\gamma_{(n+1)}(s)\|_\infty&\le C_0\exp\!\left(\frac32|\lambda|M\beta(s)\right)\le C_T,\\ \frac{\dd}{\dd s}\|\la p\ra\gamma_{(n+1)}(s)\|_2^2
 &=2\lambda\alpha\iint pE_{(n)}\gamma_{(n+1)}^2\dd q\dd p\\
 &\le |\lambda|M^{3/2}\alpha  \|\la p\ra\gamma_{(n+1)}(s)\|_2,\\ \|\la p\ra\gamma_{(n+1)}(s)\|_2
 &\le C_0+\frac{|\lambda|M^{3/2}}2\beta(s)\le C_T.
\end{aligned}
\end{equation}
As a consequence, \eqref{eq:app-uniform-moments} implies
\begin{equation}\label{eq:app-density-bound}
 \|\pr_qE_{(n)}(s)\|_\infty \le C_0^2e^{3|\lambda|M\beta(T)}     \int_\R\la p\ra^{-6}\dd p\le C_T,\qquad n\ge0.
\end{equation}
Multiplying \eqref{eq:linD} by $\la p\ra$ and applying Lemma \ref{lem:Hamilton} to the two equations gives
\begin{equation}\label{eq:app-derivative-bound}
\begin{split}
 &\|\la p\ra\pr_q\gamma_{(n+1)}(s)\|_\infty   +\|\la p\ra\pr_p\gamma_{(n+1)}(s)\|_\infty\\
 &\quad\le 2C_0\exp\!\left(\int_0^s     \bigl[1+|\lambda|\alpha(r)    (\|E_{(n)}(r)\|_\infty+\|\pr_qE_{(n)}(r)\|_\infty)\bigr]\dd r\right) \le C_T.
\end{split}
\end{equation}
Consequently,
\begin{equation}\label{eq:app-field-second}
\pr_q^2E_{(n+1)}=2\int\gamma_{(n+1)}\pr_q\gamma_{(n+1)}\dd p, \qquad \|\pr_q^2E_{(n+1)}\|_\infty \le C_T\int\la p\ra^{-4}\dd p\le C_T.
\end{equation}

\smallskip
\noindent\emph{Step 3. Convergence of the Picard sequence.}
For $n\ge1$, subtraction of two amplitude equations gives
\begin{equation}\label{eq:app-difference}
\begin{split}
 &(\pr_s+p\pr_q+\lambda\alpha E_{(n)}\pr_p)       (\gamma_{(n+1)}-\gamma_{(n)})\\
 &\qquad=-\lambda\alpha(E_{(n)}-E_{(n-1)})\pr_p\gamma_{(n)},\qquad (\gamma_{(n+1)}-\gamma_{(n)})(0)=0.
\end{split}
\end{equation}
Write $e_n(s)=\|E_{(n)}(s)-E_{(n-1)}(s)\|_\infty$. Lemma \ref{lem:Hamilton} and \eqref{eq:iter} imply
\begin{align}
 \|\gamma_{(n+1)}(s)-\gamma_{(n)}(s)\|_\infty
 &\le C_T\int_0^s e_n(u)\dd u,\label{eq:app-gamma-difference}\\
 e_{n+1}(s)
 &\le C_T\int_0^s\|\gamma_{(n+1)}(r)-\gamma_{(n)}(r)\|_\infty\dd r\notag\\
 &\le C_T\int_0^s(s-u)e_n(u)\dd u.\label{eq:app-field-difference}
\end{align}
For the second estimate  we used 
\[
 \int |p|\,|\gamma_{(n+1)}^2-\gamma_{(n)}^2|\dd p \le C_T\|\gamma_{(n+1)}-\gamma_{(n)}\|_\infty                \int |p|\la p\ra^{-3}\dd p.
\]
Since $e_1(s)\le C_Ts$, induction in \eqref{eq:app-field-difference} yields, after fixing one sufficiently large $C_T$,
\begin{equation}\label{eq:app-factorial}
 {\displaystyle
 e_n(s)\le C_T^n\frac{s^{2n-1}}{(2n-1)!},\qquad n\ge1.}
\end{equation}
Indeed,
\[
 \int_0^s(s-u)\frac{u^{2n-1}}{(2n-1)!}\dd u       =\frac{s^{2n+1}}{(2n+1)!}.
\]
Thus both sequences converge uniformly on the whole interval $[0,T]$:
\[
 E_{(n)}\longrightarrow E, \qquad \gamma_{(n)}\longrightarrow\gamma.
\]
Moreover,
\begin{equation}\label{eq:app-field-C1}
 \|\pr_qE_{(n)}-\pr_qE_{(m)}\|_\infty \le C_T\|\gamma_{(n)}-\gamma_{(m)}\|_\infty, \qquad n,m\ge1.
\end{equation}
Hence $\pr_q E$ is continuous, $\pr_q E=\int\gamma^2\dd p$, and the limit satisfies the lens equation distributionally. The uniform first derivative bounds imply that $\gamma$ is Lipschitz in $(q,p)$. The moment bounds pass by Fatou's lemma. Finally, the regularity follows from the method of characteristics.

\smallskip
\noindent\emph{Step 4. Uniqueness and finite-time continuation.}
For two solutions with the same initial data, the preceding difference argument gives
\[
 \|\gamma_1(s)-\gamma_2(s)\|_\infty\le C_T\int_0^s\|E_1(r)-E_2(r)\|_\infty\dd r,
\]
\[
 \|E_1(s)-E_2(s)\|_\infty \le C_T\int_0^s\|\gamma_1(r)-\gamma_2(r)\|_\infty\dd r.
\]
Adding and applying Gronwall gives $\gamma_1=\gamma_2$ and $E_1=E_2$. Because the Picard construction works for every $T<1$, uniqueness makes these solutions agree on overlapping intervals. This constructs $\gamma$ on $[0,1)$ and proves all the asserted finite-time bounds.

\smallskip
\noindent\emph{Step 5. Picard construction of the Burgers correction.}
The field is now fixed. Choose
\[
 R=2\left(1+|\lambda|\sup_{0\le s\le T}\alpha(s)                    \sum_{j=0}^2\|\pr_q^jE(s)\|_\infty\right), \qquad 0<\tau\le\min\{T,(16R)^{-1}\}.
\]
Set $\Gamma_{(0)}=0$ and solve the linear problems
\begin{equation}\label{eq:app-burgers-picard}
(\pr_s+\Gamma_{(n)}\pr_q)\Gamma_{(n+1)}  =\lambda\alpha E, \qquad \Gamma_{(n+1)}(0)=0.
\end{equation}
Their first and second derivative equations are
\begin{equation}\label{eq:app-burgers-derivatives}
\begin{aligned}
(\pr_s+\Gamma_{(n)}\pr_q)\pr_q\Gamma_{(n+1)}+(\pr_q\Gamma_{(n)})\pr_q\Gamma_{(n+1)}
 &=\lambda\alpha \pr_q E,\\
(\pr_s+\Gamma_{(n)}\pr_q)\pr_q^2\Gamma_{(n+1)}+2(\pr_q\Gamma_{(n)})\pr_q^2\Gamma_{(n+1)}&=\lambda\alpha \pr_q^2 E-(\pr_q^2\Gamma_{(n)})\pr_q\Gamma_{(n+1)}.
\end{aligned}
\end{equation}
Suppose that $\sup_{s\le\tau}\max_{0\le j\le2}\|\pr_q^j\Gamma_{(n)}(s)\|_\infty\le R$. Lemma \ref{lem:prop} yields
\begin{align*}
 \sup_{s\le\tau}\|\Gamma_{(n+1)}(s)\|_\infty&\le\tau R/2,\\
 \sup_{s\le\tau}\|\pr_q\Gamma_{(n+1)}(s)\|_\infty&\le(\tau R/2)e^{\tau R},\\
\sup_{s\le\tau}\|\pr_q^2\Gamma_{(n+1)}(s)\|_\infty &\le\tau(R/2+R^2)e^{2\tau R}\le R.
\end{align*}
The first two right-hand sides are also at most $R$, so induction closes.

For the differences, write $\delta\Gamma_{(n)}=\Gamma_{(n)}-\Gamma_{(n-1)}$. Subtraction and one differentiation give
\begin{align*}
(\pr_s+\Gamma_{(n)}\pr_q)\delta\Gamma_{(n+1)}&=-\delta\Gamma_{(n)}\pr_q\Gamma_{(n)},\\
(\pr_s+\Gamma_{(n)}\pr_q)\pr_q\delta\Gamma_{(n+1)}+(\pr_q\Gamma_{(n)})\pr_q\delta\Gamma_{(n+1)}& =-(\pr_q\delta\Gamma_{(n)})\pr_q\Gamma_{(n)} -\delta\Gamma_{(n)}\pr_q^2\Gamma_{(n)}.
\end{align*}
Consequently, with the $W^{1,\infty}$ norm defined as the sum of the two suprema,
\begin{equation}\label{eq:app-burgers-contraction}
\begin{split}
 \sup_{s\le\tau}\|\delta\Gamma_{(n+1)}(s)\|_{W^{1,\infty}}&\le\tau R(1+e^{\tau R})\sup_{s\le\tau}\|\delta\Gamma_{(n)}(s)\|_{W^{1,\infty}}\\
 &\le\frac12\sup_{s\le\tau}                        \|\delta\Gamma_{(n)}(s)\|_{W^{1,\infty}}.
\end{split}
\end{equation}
Thus $\Gamma_{(n)}\to\Gamma$ in $C([0,\tau];W^{1,\infty})$, and $\Gamma$ solves \eqref{eq:Burg}. The second derivative satisfies
\begin{equation}\label{eq:app-burgers-second}
 (\pr_s+\Gamma\pr_q)\pr_q^2\Gamma     +3(\pr_q\Gamma)\pr_q^2\Gamma=\lambda\alpha \pr_q^2 E,\qquad \pr_q^2\Gamma(0)=0.
\end{equation}
The coefficients  are continuous. Lemma~\ref{lem:prop} and bounded weak uniqueness therefore identify $\pr_q^2\Gamma$ with the continuous solution of \eqref{eq:app-burgers-second}. Hence $\Gamma$ has two continuous spatial derivatives. 

Two Burgers solutions satisfy
\[
 (\pr_s+\Gamma_1\pr_q)(\Gamma_1-\Gamma_2)  =-(\Gamma_1-\Gamma_2)\pr_q\Gamma_2.
\]
Gr\"onwall's inequality proves uniqueness. To continue, suppose that $\Gamma$ and $\pr_q \Gamma$ remain bounded as $s\rightarrow T<1$. Equation \eqref{eq:app-burgers-second} gives
\begin{equation}\label{eq:app-burgers-continuation}
 \|\pr_q^2\Gamma(s)\|_\infty\le |\lambda|\int_0^s e^{3\int_r^s\|\pr_q \Gamma(u)\|_\infty\dd u}              \alpha(r)\|\pr_q^2E(r)\|_\infty\dd r<\infty.
\end{equation}
Moreover,
\[
 \pr_s\Gamma=\lambda\alpha E-\Gamma\pr_q\Gamma, \qquad \pr_s\pr_q\Gamma=\lambda\alpha \pr_q E-\Gamma\pr_{q}^2\Gamma-(\pr_q\Gamma)^2.
\]
Thus $\Gamma$ and $\pr_q \Gamma$ have uniform limits at $T$. The continuous-coefficient linear equation \eqref{eq:app-burgers-second}, with these extended coefficients, gives a continuous final value for $\pr_q^2 \Gamma$. Restart \eqref{eq:app-burgers-picard} at $T$ with this $C^2$ datum, including its norm in the choice of $R$ and taking a smaller $\tau$ if needed. This proves the continuation criterion.

Finally, the Hamilton--Jacobi phase is recovered by
\[
 \phi(s,q)=\int_0^qE(s,y)\dd y,\qquad \Theta(s,q)=\int_0^s\left[\lambda\alpha(r)\phi(r,q)-\tfrac12\Gamma(r,q)^2\right]\dd r.
\]
Indeed, the Burgers equation gives
\[
 \pr_q\Theta=\int_0^s(\lambda\alpha E-\Gamma\pr_q\Gamma)\dd r=\Gamma, \qquad \pr_s\Theta+\tfrac12(\pr_q\Theta)^2=\lambda\alpha\phi.
\]
\end{proof}

\end{document}